\documentclass[11pt]{article}
\usepackage[latin1]{inputenc}
\usepackage{epsfig}
\usepackage{color}
\usepackage[british,english]{babel}
\usepackage{amsthm}
\usepackage{amsmath}
\usepackage{amsfonts}
\usepackage{amssymb}
\usepackage{graphicx}

\usepackage{tikz}
\usepackage{pgfplots}
\usepgfplotslibrary{colormaps}

\usepackage{color}

\setlength{\topmargin}{-0.3in}
\setlength{\oddsidemargin}{-0.2in}
\setlength{\textheight}{8.75in}
\setlength{\textwidth}{6.75in}

\newcommand{\R}{\mathbb{R}}
\newcommand{\eps}{\epsilon}

\definecolor{darkgreen}{rgb}{0,0.6,0}

\definecolor{darkblue}{rgb}{0,0,0.7} 
\definecolor{darkred}{rgb}{0.9,0.1,0.1}
\definecolor{darkgreen}{rgb}{0,0.6,0}



\usepackage{fancyhdr}

\lhead[x1]{}
\chead[y1]{Mar\'ia Anguiano and Francisco J. Su\'arez-Grau}
\rhead[z1]{}
\fancyhead[L]{}
\fancyhead[C]{Mar\'ia Anguiano, Matthieu Bonnivard and Francisco J. Su\'arez-Grau}
\fancyhead[R]{}
\pagestyle{fancy}

\parskip 1.2ex plus 0.5ex minus 0.5ex
\newtheorem{corollary}{Corollary}[section]
\newtheorem{definition}[corollary]{Definition}

\newtheorem{lemma}[corollary]{Lemma}

\newtheorem{remark}[corollary]{Remark}
\newtheorem{theorem}[corollary]{Theorem}

\numberwithin{equation}{section}

\date{}
\begin{document}
\title{Effective models for generalized Newtonian fluids through a thin porous medium following the Carreau law}\maketitle

\vskip-30pt
 \centerline{Mar\'ia ANGUIANO\footnote{Departamento de An\'alisis Matem\'atico. Facultad de Matem\'aticas.
Universidad de Sevilla, 41012 Sevilla (Spain)
anguiano@us.es}, Matthieu BONNIVARD\footnote{Ecole Centrale de Lyon, CNRS, INSA Lyon, Universite Claude Bernard Lyon 1, Université Jean Monnet, ICJ UMR5208,
69130 Ecully, France,
matthieu.bonnivard@ec-lyon.fr}, and  Francisco J. SU\'AREZ-GRAU\footnote{Departamento de Ecuaciones Diferenciales y An\'alisis Num\'erico. Facultad de Matem\'aticas. Universidad de Sevilla, 41012 Sevilla (Spain) fjsgrau@us.es}}

 \renewcommand{\abstractname} {\bf Abstract}
\begin{abstract} 
We consider the flow of a generalized Newtonian fluid through a thin porous medium of thickness $\epsilon$, perforated by periodically distributed solid cylinders of size $\epsilon$. We assume that the fluid is described by the 3D incompressible Stokes system, with a non-linear viscosity following the Carreau law of flow index $1<r<+\infty$, and scaled by a factor $\epsilon^{\gamma}$, where $\gamma\in \mathbb{R}$. Generalizing (Anguiano {\it et al}.\! , Q. J. Mech. Math., 75(1), 2022, 1-27), where the particular case $r<2$ and $\gamma=1$ was addressed, we perform a new and complete study on the asymptotic behaviour of the fluid as $\epsilon$ goes to zero. Depending on $\gamma$ and the flow index $r$, using homogenization techniques, we derive and rigorously justify different effective linear and non-linear lower-dimensional Darcy's laws. Finally, using a finite element method, we study numerically the influence of the rheological parameters of the fluid and of the shape of the solid obstacles on the behaviour of the effective systems.

\end{abstract}

 {\small \bf AMS classification numbers: } 76-10, 76A05, 76M50, 76A20, 76S05, 35B27, 35Q35.

 {\small \bf Keywords: } Homogenization, non-Newtonian fluid, Carreau law, thin porous media.
  \section {Introduction}\label{S1} 
  
An incompressible generalized Newtonian fluid is a type of non-Newtonian fluid which is characterized by a viscosity depending on the principal invariants of the  strain-rate tensor $\mathbb{D}[u]$. If $u$ is the velocity, $p$ the pressure and $Du$ the gradient velocity tensor, $\mathbb{D}[u]=(Du+D^tu)/2$  denotes the  strain-rate tensor  and $\sigma$ the stress tensor given by $\sigma=-pI+2\eta_{r}\mathbb{D}[u]$. The viscosity $\eta_r$ is constant for a Newtonian fluid but dependent of the shear rate, \emph{i.e.}\! $\eta_r=\eta_{r}(\mathbb D[u])$, for generalized Newtonian fluids. The deviatoric stress tensor $\tau$, \emph{i.e.}\! the part of the total stress tensor that is zero at equilibrium, is then a nonlinear function of the shear rate $\mathbb D[u]$: $
\tau=\eta_r(\mathbb D[u]) \mathbb D[u]$  (see Barnes {\it et al.} \cite{Barnes}, Bird {\it et al.} \cite{Bird} and Mikeli\'c \cite{Mikelic1} for more details). 

The \emph{power law} or Ostwald-de Waele
model (Ostwald, 1925; de Waele, 1923) is commonly used to describe the motion of a generalized Newtonian fluid. The corresponding viscosity formula is
\begin{equation}\label{PowerLaw}\eta_r(\mathbb{D}[u])=\mu |\mathbb{D}[u]|^{r-2},\quad 1<r<+\infty,\quad \mu>0,
\end{equation} 
where $\mu>0$ is the consistency of the fluid and $r$ is the flow index.  The matrix norm $|\cdot|$ is defined by $|\xi|^2=Tr(\xi \xi^t)$ with $\xi \in \mathbb{R}^{3\times 3}$.  We recall that the generalized Newtonian fluids are classified in two main categories (see Saramito \cite[Chapter 2]{Saramito} for more details): 
\begin{itemize}
\item[--]  {\it pseudoplastic} or {\it shear thinning} fluids, where the viscosity decreases with the shear rate, which corresponds to the case of a flow index  $1<r<2$;
\item[--]  {\it dilatant} or {\it shear thickening} fluids, where the viscosity increases with the shear rate, and $r>2$.
\end{itemize}
We also recall that the case $r=2$ corresponds to a Newtonian fluid.\\

 The power law was introduced to model polymer solutions at high shear rates, and by its simplicity, permits analytical calculations in simple geometries. However, it has the
disadvantage of not describing a Newtonian plateau and even predicts an infinite viscosity
as the shear rate goes to zero and $1<r<2$ (see Agassant \emph{et al.}\!~\cite[p.~49]{Agassant}), whereas for
real fluids it tends to some constant value $\eta_0$ called the zero-shear-rate viscosity. For these reasons, other viscosity models are used, which better describe the real behaviour of pseudoplastic or dilatant fluids, but are more difficult to analyze mathematically.

\paragraph{The Carreau law.} Among those, an important model is the well-known {\it Carreau law}, which will be considered in this paper and is defined by
\begin{equation}\label{Carreau}
\eta_r(\mathbb{D}[u])=(\eta_0-\eta_\infty)(1+\lambda|\mathbb{D}[u]|^2)^{{r\over 2}-1}+\eta_\infty,\quad 1<r<+\infty,\quad \eta_0> \eta_\infty>0,\quad \lambda>0\, .
\end{equation} 
In this relation, $r$ is the flow index of the fluid, $\eta_0$ is the low-shear-rate limit of the viscosity, and for $1<r<2$, $\eta_\infty$ is the high-shear-rate limit of the viscosity. Parameter $\lambda$ is a time constant and $r-2$ describes the slope in the power law region. 
For $r=2$, as in the power-law model, one recovers a Newtonian fluid model, with viscosity $\eta_0$.

In this paper, we consider the flow of a Carreau fluid in a thin porous medium. In this context, the complexity of the fluid domain geometry makes it very costly to solve the equations from continuum mechanics directly. The homogenisation theory provides an alternative approach, which aims to derive an average model by analyzing the effects of the micro-structure, \emph{i.e.}\! the pore structure, on the solutions of these partial differential equations. This mathematical method allows one to derive rigorously the equations describing the filtration of a generalized Newtonian fluid (see for instance Mikeli\'c \cite{Mikelic1}).

To derive the averaged law describing the generalized Newtonian fluid flow through a porous medium $\Omega_{\epsilon}\subset\mathbb{R}^3$,  which is a domain with fixed height and periodically perforated by obstacles of size $\epsilon$, Bourgeat and Mikeli\'c in \cite{Bourgeat1} (see also Bourgeat {\it et al.} \cite{Bourgeat2} and Kalousek \cite{Kalousek}) used  the two-scale convergence method. We also refer to Anguiano \cite{Anguiano_Med2020, Anguiano_Med2023} for nonlinear parabolic problems in porous medium. The domain without perforations is the bounded smooth domain $\Omega\subset\mathbb{R}^3$ that is divided into two parts, the fluid part $\Omega_\epsilon$ and the solid part $\Omega\setminus\Omega_\epsilon$. 
Moreover, assuming that the flow is  sufficiently slow to neglect inertial effects, the following stationary Stokes system with a non-linear viscosity following the {\it Carreau law} (\ref{Carreau}) was considered:
\begin{equation}\label{Bourgeat_Mikelic}
\left\{\begin{array}{rl}\displaystyle
\medskip
-\epsilon^\gamma {\rm div}\left(\eta_r(\mathbb{D}[u_\epsilon])\mathbb{D}[u_\epsilon]\right)+ \nabla p_\epsilon = f & \hbox{in }\Omega_\epsilon,\\
\medskip
\displaystyle
{\rm div}\,u_\epsilon=0& \hbox{in }\Omega_\epsilon,\\
\medskip
\displaystyle
u_\epsilon=0& \hbox{on }\partial \Omega_\epsilon.
\end{array}\right.
\end{equation}
In the momentum equation~(\ref{Bourgeat_Mikelic}), the viscosity $\eta_r$ is affected by a factor $\eps^\gamma$. This can be interpreted as a scaling of the low-shear-rate and high-shear-rate limit viscosities $\eta_0,\eta_\infty$ appearing in the Carreau law~\eqref{Carreau} by $\eps^\gamma$, which allows one to characterize mathematically the impact of these quantities on the asymptotic behaviour of the system. Indeed, letting $\epsilon$ tend to zero, different types of averaged momentum equations connecting the velocity and the pressure gradient were rigorously derived, depending on the value of $\gamma$ and the flow index $r$.

\begin{itemize}
\item If $\gamma < 1$ and $1<r<+\infty$, the homogenized law is the classical 3D Darcy's law for Newtonian fluids 
\begin{equation}\label{DarcyMikless1}V(x)={K\over \eta}\left(f(x)-\nabla_{x}p(x)\right)\ \hbox{in }\Omega,\quad {\rm div}_x V(x)=0\ \hbox{ in }\Omega,\quad V(x)\cdot n=0\ \hbox{ on }\partial\Omega,
\end{equation}
where $p$ is the limit pressure and the {\it permeability tensor} $K\in\mathbb{R}^{3\times 3}$ is obtained by solving 3D Stokes local problems posed in a reference cell which contains the information of the geometry of the obstacles. The viscosity $\eta$ is equal to $\eta_0$.

\item If $\gamma=1$ and  $r\neq 2$, the mean global filtration velocity as a function of the pressure gradient is given by
\begin{equation}\label{CarreauintroMik}
V(x)=\mathcal{U}\left(f(x)-\nabla_{x}p(x)\right)\ \hbox{in }\Omega,\quad {\rm div}_x V(x)=0\ \hbox{ in }\Omega,\quad V(x)\cdot n=0\ \hbox{ on }\partial\Omega,
\end{equation}
where  $\mathcal{U}:\mathbb{R}^3\to \mathbb{R}^3$ is a {\it permeability operator}, not necessary linear,  and is defined through
the solutions of  3D local Stokes problems with non-linear viscosity following the {\it Carreau law} and posed in a reference cell. Linearity holds only if $r=2$, where  the classical 3D Darcy's law for Newtonian fluids  (\ref{DarcyMikless1}) is derived with $\eta=\eta_0$.

\item If $\gamma>1$, the following cases hold
\begin{itemize}
\item For $1<r\leq 2$, the homogenized law is the classical 3D Darcy's law for Newtonian fluids (\ref{DarcyMikless1}) with $\eta=\eta_\infty$ if $1<r<2$ and $\eta=\eta_0$ if $r=2$.
\item For $r> 2$, the mean global filtration velocity as a function of the pressure gradient is given by (\ref{CarreauintroMik}), where the {\it permeability operator} $\mathcal{U}:\mathbb{R}^3\to \mathbb{R}^3$ is defined through
the solutions of  3D local non-Newtonian Stokes problems with non-linear viscosity following the {\it power law} (\ref{PowerLaw}) with $\mu=(\eta_0 -\eta_\infty)\lambda^{(r/2)-1}$ and posed in a reference cell.

\end{itemize}
\end{itemize}

On the other hand, in \cite{Tapiero2} Boughanim and Tapi\'ero considered the generalized Newtonian fluids   flow through a thin domain $\Omega_\epsilon=\omega\times (0,\epsilon) \subset \mathbb{R}^3$, with $\omega\subset\mathbb{R}^2$, where $\epsilon$ is the small thickness of the domain. From the Stokes system (\ref{Bourgeat_Mikelic}), with external body force of the form $f=(f',0)$ such that $f'=(f_1,f_2)$,
by using dimension reduction and homogenization techniques, they derived the limit when the thickness tends to zero. Depending on the value of $\gamma$ and the flow index $r$, they obtained the following:
\begin{itemize}
\item If $\gamma< 1$, the following cases hold
\begin{itemize}
\item For $1<r\leq 2$,  the homogenization law is the classical linear 2D Reynolds law for Newtonian fluids
\begin{equation}\label{Reynoldsintro1}\left\{\begin{array}{l}
\medskip
\displaystyle V'(x')={1\over 6\mu}\left(f'(x')-\nabla_{x'}p(x')\right),\quad V_3(x')=0\quad\hbox{in }\omega,\\
\medskip
{\rm div}_{x'}V(x')=0\quad\hbox{in }\omega,\quad V(x')\cdot n=0\quad\hbox{on }\partial\omega,
\end{array}\right.
\end{equation}
where $V'=(V_1,V_2)$, $x'=(x_1,x_2)$. The viscosity $\mu$ is equal to $\eta_0$.
\item For $r> 2$, the filtration velocity is zero, \emph{i.e.}\! $V\equiv 0$.
\end{itemize}

\item If $\gamma=1$ and  $r\neq 2$, the homogenization law corresponds to a non-linear 2D Reynolds law of Carreau type
$$\left\{\begin{array}{l}
\medskip
\displaystyle
V'(x')=2(f'(x')-\nabla_{x'}p(x'))\int_{-{1\over 2}}^{1\over 2}{({1\over 2}+\xi)\xi\over 
\psi(2|f'(x')-\nabla_{x'}p(x')||\xi|)}\,d\xi,\quad V_3(x')=0\quad\hbox{in }\omega,\\
\medskip
{\rm div}_{x'}V'(x')=0\quad\hbox{in }\omega,\quad V'(x')\cdot n=0\quad\hbox{on }\partial\omega,
\end{array}\right.$$
where the function $\psi=\psi(\tau)$, $\tau\in\mathbb{R}^+$, is the inverse of the equation  $\tau=\psi\sqrt{{2\over \lambda}\left({\psi-\eta_\infty\over \eta_0-\eta_\infty}\right)^{2\over r-2}-1}$. Linearity holds only if $r=2$, where the the classical linear 2D Reynolds law for Newtonian fluids (\ref{Reynoldsintro1}) with $\eta=\eta_0$ is derived.

\item If $\gamma>1$, the following cases hold
\begin{itemize}
\item For $1<r\leq 2$, the homogenization law is the classical linear 2D Reynolds law for Newtonian fluids (\ref{Reynoldsintro1}) with $\mu=\eta_\infty$ if $1<r<2$ and $\mu=\eta_0$ if $r=2$.
\item For $r>2$, the homogenization law corresponds to a non-linear 2D Reynolds law of power  type
$$\left\{\begin{array}{l}
\medskip
\displaystyle
V'(x')={\lambda^{r'/2-1}\over (\eta_0-\eta_\infty)^{r'-1}}{1\over 2^{r'/2}(r'+1)}|f'(x')-\nabla_{x'} p(x')|^{r'-2}(f'(x')-\nabla_{x'} p(x'))\quad\hbox{in }\omega,\\
\medskip V_3(x')=0\quad\hbox{in }\omega,\\
\medskip
{\rm div}_{x'}V'(x')=0\quad\hbox{in }\omega,\quad V'(x')\cdot n=0\quad\hbox{on }\partial\omega.
\end{array}\right.$$
\end{itemize}
\end{itemize}

We remark that in \cite{Tapiero2}  and \cite{Bourgeat1} the Navier-Stokes equation is considered, which implies an upper limit on $\gamma$ due to the contribution of the inertial term. However, as pointed out in the beginning of Section 1.4 in  \cite{Bourgeat1}  (see also \cite{Bourgeat2}), the corresponding results   remain unchanged for the non-Newtonian Stokes systems. Furthermore, there are no upper limits on $\gamma$ and the convergence of the pressure is  stronger.
  This motivates our choice of studying the flow of a generalized Newtonian fluid governed by the stationary Stokes system (\ref{Bourgeat_Mikelic}), with a non-linear viscosity following the {\it Carreau law} (\ref{Carreau}) through a thin periodic medium. We now detail the geometry of the medium that we consider.

\paragraph{  The thin porous medium.}

The interest in the behaviour of generalized Newtonian fluids through thin porous media has increased recently, mainly because of their use in many industrial processes (see Prat and Aga${\rm \ddot{e}}$sse~\cite{Prat} for more details).   A thin porous medium is a microstructured thin domain, that can be represented as a domain of thickness $\epsilon$ with
$0<\epsilon\ll 1$, perforated by an array of periodically distributed solid cylinders of diameter $a_\epsilon$, where the parameter $0<a_\epsilon\ll 1$ tends to zero with $\epsilon$. 
Recently,  this problem has been analyzed in Anguiano and Su\'arez-Grau \cite{Anguiano_SuarezGrau, Anguiano_SuarezGrau2} and Frabricius {\it et al.}\! \cite{Fabricius}, where the behaviour of Newtonian or power law fluid flows through a thin porous medium is considered. Three classes of thin porous media were introduced.
\begin{itemize}
\item[-]  The {\it proportionally thin porous medium}, corresponding to the critical case where the cylinder   diameter is proportional to the interspatial distance, with $\nu$ the proportionality constant, \emph{i.e.}\! $a_\epsilon\approx \epsilon$, with $a_\epsilon/\epsilon\to \nu$, $0<\nu<+\infty$. 
\item[-]  The {\it homogeneously thin porous medium}, corresponding  to the case where the cylinder  diameter is much larger than the interspatial distance,  \emph{i.e.}\! $a_\epsilon\ll \epsilon$ which is equivalent to $\nu=0$.
\item[-]  The {\it very thin porous medium}, corresponding to the case where the cylinder   diameter is much smaller than the interspatial distance,  \emph{i.e.}\! $a_\epsilon\gg \epsilon$ which is equivalent to $\nu=+\infty$.
\end{itemize}

A lower-dimensional Darcy law is obtained in every case, but the {\it permeability operator} depends on the type of thin porous medium considered. More precisely, for a proportionally thin porous medium, the permeability operator is defined through the solutions of 3D local Stokes problems depending on $\nu$. For a homogeneously thin porous medium, this operator is defined through the solutions of 2D local Stokes problems, whereas for a very thin porous medium, it is defined through the solutions of 2D local Hele-Shaw problems.

We  also refer to other recent studies by Anguiano \cite{Anguiano_MMAS, Anguiano_ZAMM,  Anguiano_Derivation}, Anguiano and Su\'arez-Grau \cite{Anguiano_SuarezGrau_Derivation, AnguianoSG_small, AnguianoSG_sharp, AnguianoSG_cilindros_delgados},  Bunoiu and Timofte \cite{Bunoiu_Timofte, BunoiuTimofte},  Fabricius {\it et al.} \cite{Fabricius3}, Jouybari  and Lundstr$\ddot{\rm o}$m \cite{Lundstrom},  Su\'arez-Grau \cite{SG_MN},  Yeghiazarian {\it et al.}\! \cite{Rosati} and Zhengan and Hongxing \cite{ZZ}, where the behaviour of Newtonian or power law fluids through different types of thin porous medium is considered. For the case of a Bingham flow we refer to Anguiano and Bunoiu \cite{Ang-Bun2}, and for the case of micropolar fluids we refer to Su\'arez-Grau \cite{SG_micropolar}. The case of a fluid flow through a thin porous medium with slip boundary conditions on the cylinders is considered in Anguiano and Su\'arez-Grau \cite{Anguiano_SG_Net} and Fabricius and Gahn \cite{Fabricius2}.  On the other hand, the first study about the reaction-diffusion equation in a thin porous medium was done recently in Anguiano~\cite{Anguiano_BMMS}, the two-phase flow problem in thin porous domains of Brinkman type has been considered in Armiti-Juber \cite{Armiti}, and an approach for effective heat transport in thin porous media has been derived by Scholz and Bringedal \cite{Scholz}. However, the literature on Carreau fluid flows in this type of domains is far less complete, although these problems have now become of great practical relevance to Chemical
Industry and Rheology, for instance in injection moulding of melted polymers, flow of oils, muds, etc. (see for example Pereire and Lecampion \cite{Pereira} and  Wrobel {\it et al}.\! \cite{Wrobel0, Wrobel}).

In this paper, we consider a  thin porous medium $\Omega_\epsilon =\omega_\epsilon\times (0,\epsilon)\subset \mathbb{R}^3$ of small height $\epsilon$ which is perforated by an array of periodically distributed solid cylinders of diameter of size $\epsilon$ (see Figure \ref{fig:omep}). Observe that this corresponds to the case of a proportionally thin porous medium with $\nu=1$. Here, the bottom of the domain without perforations $\omega\subset\mathbb{R}^2$ is made of two parts, the fluid part $\omega_\epsilon$ and the solid part $\omega\setminus\omega_\epsilon$. Similarly to     \cite{Tapiero2, Bourgeat1},  assuming that the flow is sufficiently slow to neglect inertial effects,  we consider that the generalized Newtonian fluid flow  through the thin porous medium $\Omega_\epsilon =\omega_\epsilon\times (0,\epsilon)$ is governed by the  stationary Stokes system (\ref{Bourgeat_Mikelic}) with a non-linear viscosity following the {\it Carreau law} (\ref{Carreau}) and scaled by a factor $\eps^\gamma$.

This problem has been very recently considered in Anguiano {\it et al.}\!~\cite{Carreau_Ang_Bonn_SG} in the particular case of a pseudoplastic fluid ($1<r<2$) with $\gamma=1$. Using homogenization techniques, it was proved that when $\epsilon$ tends to zero, the mean global filtration velocity is given, as a function of the pressure gradient, by a non-linear 2D Darcy law of Carreau type
\begin{equation}\label{intro_carreau}\left\{\begin{array}{l}
\medskip
\displaystyle
V'(x')=\mathcal{U}\left(f'(x')-\nabla_{x'}p(x')\right),\quad V_3(x')=0\ \hbox{in }\omega,\\
\medskip
{\rm div}_{x'} V'(x')=0\ \hbox{ in }\omega,\quad V'(x')\cdot n=0\ \hbox{ on }\partial\omega.
\end{array}
\right.
\end{equation}
In the above system, $n$ is the outward normal to $\partial\omega$, $V'=(V_1,V_2)$, $x'=(x_1,x_2)$, and  the {\it permeability operator} $\mathcal{U}:\mathbb{R}^2\to \mathbb{R}^2$ is defined through
the solutions of 3D local non-Newtonian Stokes problems with non-linear viscosity following the {\it Carreau law} (\ref{Carreau}) and posed in a reference cell.

In this paper, we perform a new and complete study on the asymptotic behaviour of Carreau fluids, modeled by the equations of motion \eqref{Bourgeat_Mikelic} in the thin porous medium $\Omega_{\epsilon}$, depending on the type of fluid and the value of $\gamma$. We generalize the results obtained in \cite{Carreau_Ang_Bonn_SG}  by considering system (\ref{Bourgeat_Mikelic}) not only for pseudoplastic fluids, but also for dilatant fluids and Newtonian fluids, and moreover, for any exponent $\gamma\in\mathbb{R}$. Starting from problem \eqref{Bourgeat_Mikelic} and using homogenization techniques, we derive different effective problems depending on the type of fluid and the value of $\gamma$, describing the asymptotic behaviour of the model as $\epsilon$ tends to zero. The approach that we use relies strongly on an adaptation of the periodic unfolding method introduced by Cioranescu {\it et al.}\!~\cite{CDG,Cioran-book}.

In order to give a taste of the kind of arguments that will allow us to distinguish between the different regimes related to $r$ and $\gamma$, let us give the heuristics of the obtention of the effective system in the pseudoplastic case $1<r<2$. The so-called \emph{unfolded velocity and pressure} $(\hat u_\epsilon,\hat P_\epsilon)$ (defined in Section~\ref{sec:unfolding}) satisfy for any admissible test function $\varphi$ the following inequality:
\begin{equation*}\label{v_ineq_carreau_12}\begin{array}{l}
\displaystyle
\medskip
 (\eta_0-\eta_\infty)\int_{\omega\times Z}(1+\lambda\epsilon^{2(1-\gamma)}|\mathbb{D}_z[\varphi]|^2)^{{r\over 2}-1}\mathbb{D}_z[\varphi]:\mathbb{D}_z[\varphi-\epsilon^{\gamma-2}  \hat u_\epsilon]\,dx'dz\\
\medskip
\displaystyle
+\ \eta_\infty\int_{\omega\times Z}\mathbb{D}_z[\varphi]:\mathbb{D}_z[\varphi-\epsilon^{\gamma-2}  \hat u_\epsilon]dx'dz\\
\medskip
\displaystyle- \int_{\omega\times Z}\hat P_\epsilon\, {\rm div}_{x'}(\varphi'-\epsilon^{\gamma-2}  \hat u'_\epsilon)\,dx'dz\ge  \int_{\omega\times Z} f'\cdot (\varphi'-\epsilon^{\gamma-2}  \hat u'_\epsilon)\,dx'dz+O_\epsilon,
\end{array}
\end{equation*}
where $O_\epsilon$ tends to zero with $\epsilon$.
Also,  $(\epsilon^{\gamma-2}\hat u_\epsilon,\hat P_\epsilon)$ converges in appropriate Sobolev spaces to a pair of functions called $(\hat u, \tilde P)$. We refer to Sections~\ref{sec:compactness} and~\ref{sec:proofs_of_thms} (in particular equation~\eqref{v_ineq_carreau_12}) for more details. 

Then,  we observe that if $\gamma<1$, $2(1-\gamma)>0$ so $\lambda\epsilon^{2(1-\gamma)}|\mathbb{D}_y[\varphi]|^2$ tends to zero, whereas for $\gamma>1$, $2(1-\gamma)<0$ so $(1+\lambda\epsilon^{2(1-\gamma)}|\mathbb{D}_y[\varphi]|^2)^{{r\over 2}-1}$ tends to zero. As a consequence, the sum of the two first terms in the previous formulation converges to the linear term
$$ \eta\int_{\omega\times Z}\mathbb{D}_z[\varphi]:\mathbb{D}_z[\varphi-\hat u]dx'dz,$$
with $\eta=\eta_0$ if $\gamma<1$ and $\eta=\eta_\infty$ if $\gamma>1$. In case $\gamma=1$,  the critical case that couples the  nonlinear term and the linear one, their sum converges to 
$$(\eta_0-\eta_\infty)\int_{\omega\times Z}(1+\lambda|\mathbb{D}_z[\varphi]|^2)^{{r\over 2}-1}\mathbb{D}_z[\varphi]:\mathbb{D}_z[\varphi-\hat u]\,dx'dz
+ \eta_\infty\int_{\omega\times Z}\mathbb{D}_z[\varphi]:\mathbb{D}_z[\varphi-\hat u]dx'dz.$$
Thus, we obtain three different asymptotic behaviours depending on whether the value of $\gamma$ is smaller, equal or greater than $1$. 

For dilatant fluids,  there exist three different convergences of the unfolding velocity depending on the value of $\gamma$ and, as consequence, three different homogenized models are derived.

\paragraph{Summary of the different asymptotic regimes.}
 
In summary, we have the following asymptotic behaviours of Carreau fluids depending on the  the value of $\gamma$ and the type of fluid:
\begin{itemize}
\item If $\gamma<1$, regardless of the value of $r$, the effective problem is the linear 2D Darcy law
\begin{equation}\label{resultintro1}
\left\{\begin{array}{l}
\medskip
\displaystyle
V'(x')={1\over \eta}\mathcal{A}\left(f'(x')-\nabla_{x'} p(x')\right),\quad V_3(x')=0\quad \hbox{in }\omega,\\
\medskip
\displaystyle
{\rm div}_{x'} V'(x')=0\ \hbox{ in }\omega,\quad V'(x')\cdot n=0\ \hbox{ on }\partial\omega,
\end{array}
\right.
\end{equation}
where the permeability tensor $\mathcal{A}\in \mathbb{R}^{2\times 2}$ is obtained
by solving 3D local Newtonian Stokes problems, posed in a reference cell containing the
information on the obstacles' geometry. The viscosity $\eta$ is equal to $\eta_0$.

\item If $\gamma=1$, the asymptotic behaviour of the model depends on the fact that the fluid is Newtonian of not.
\begin{itemize}
\item  For $1<r<\infty$ with $r\neq 2$,  the effective problem is the non-linear 2D Darcy law of {\it Carreau} type (\ref{intro_carreau}), which is obtained in \cite{Carreau_Ang_Bonn_SG}. 
\item For $r=2$,  the effective problem is the linear 2D Darcy law (\ref{resultintro1}) with viscosity $\eta=\eta_0$.
\end{itemize} 

\item If $\gamma>1$, then pseudoplastic, Newtonian and dilatant fluid flows have distinct asymptotic properties.
\begin{itemize}
\item  For $r\in (1,2)$, the effective problem is the linear 2D Darcy law (\ref{resultintro1}) with viscosity $\eta=\eta_\infty$.
\item For $r=2$, the effective problem is the linear 2D Darcy law (\ref{resultintro1}) with viscosity $\eta=\eta_0$.
\item For $r>2$, the effective problem is a non-linear 2D Darcy law of \emph{power law} type
$$\left\{\begin{array}{l}
\medskip
\displaystyle
V'(x')=\frac{\lambda^{r'/2-1}}{(\eta_0-\eta_{\infty})^{r'-1}}\mathcal{U}\left(f'(x')-\nabla_{x'}\tilde P(x')\right),  \quad V_3(x')=0\ \hbox{in }\omega,\\
\medskip
{\rm div}_{x'} V'(x')=0\ \hbox{ in }\omega,\quad V'(x')\cdot n=0\ \hbox{ on }\partial\omega,
\end{array}
\right.$$
where the permeability operator $\mathcal{U}:\mathbb{R}^2\to \mathbb{R}^2$ is defined through
the solutions of  3D local non-Newtonian Stokes problems with non-linear viscosity following the {\it power law} (\ref{PowerLaw}) and posed in a reference cell.
\end{itemize} 
\end{itemize}

In Table \ref{table_asymp1}, we summarize every asymptotic behaviour of the Carreau flow governed by (\ref{Bourgeat_Mikelic}) depending on the type of fluid and the value of $\gamma$:\\

\begin{table}[h!]\centering
\begin{tabular}{|c||c|c|c|}
\hline
 &   $1<r<2$  &  $r=2$ &  $r>2$\\
 \hline \hline
 $\gamma<1$ & \begin{tabular}{c}
 {\small Linear 2D Darcy's law}\\
{\small (viscosity $\eta_0$)}
 \end{tabular}&{\small Linear } &    \begin{tabular}{c}
 {\small Linear 2D Darcy's law}\\
{\small (viscosity $\eta_0$)}
 \end{tabular}
 \\  \cline{1-2}\cline{4-4}
  $\gamma=1$ &  \begin{tabular}{c}{\small  Non-linear  2D Darcy's law }\\
  {\small (Carreau type)}
  \end{tabular} & {\small  2D  Darcy's law }& \begin{tabular}{c}{\small Non-linear 2D Darcy's law }\\
  (Carreau type)
  \end{tabular}\\  \cline{1-2}\cline{4-4}
  $\gamma>1$ & \begin{tabular}{c} {\small Linear 2D Darcy's law }\\
  {\small (viscosity $\eta_\infty$)}
  \end{tabular}& {\small (viscosity $\eta_0$)}& \begin{tabular}{c} {\small Non-linear 2D Darcy's law }
  \\
  {\small (power law type)}
  \end{tabular}\\ \cline{1-2}\cline{4-4}
  \hline
\end{tabular} 
\caption{Asymptotic behaviours of Carreau fluids depending on the values of $r$ and $\gamma$.}
\label{table_asymp1}
\end{table}

\paragraph{How can one determine the exponent $\gamma$ in practice?}

As revealed by the asymptotic analysis carried out in~\cite{Bourgeat1} and in the present contribution, the computation of the exponent $\gamma$ is crucial to characterize the flow of pseudoplastic and dilatant fluids through porous media. The presence of the scaling factor $\eps^\gamma$ expresses the modelling assumption that the viscosities $\eta_0, \eta_{\infty}$ are the only parameters from the Carreau law that are affected by the small geometrical scale $\eps$ (and that the effect is the same on both viscosities), while the other parameters $r$ and $\lambda$ are independent on $\eps$. Under these assumptions, one possible strategy to compute the exponent $\gamma$ associated to a specific problem could be the following.

When dealing with a quasi-Newtonian fluid, start by fitting experimentally the parameters $\eta_0$, $\eta_\infty$, $r$, $\lambda$ appearing in the Carreau law~\eqref{Carreau}. We refer to~\cite[Table 4.1-1]{Bird} and~\cite[Table 3]{Yasuda} for examples of experimental results related to the shear flow of polysterene solutions. After obtaining these values, one has to apply the same technique to the flow of the fluid under study in a thin porous domain of characteristic size $\eps$. If the parameters $r$ and $\lambda$ are indeed invariant, the adjusted values of the rescaled viscosities $\eps^\gamma \eta_0$ and $\eps^\gamma \eta_\infty$ give then access to the value of $\eps^\gamma$, hence to $\gamma$ itself.

The structure of the paper is as follows. In Section \ref{sec:main} we introduce the domain, make the statement of the problem and give the main results (Theorems \ref{mainthmPseudo}, \ref{mainthmDilatant} and \ref{mainthmNewtonian}). The  proofs of the main results are provided in Section \ref{sec:proofs}. Finally, we perform in Section~\ref{Section:Numerics} a numerical study of the different effective systems described in Theorems~\ref{mainthmPseudo}, \ref{mainthmDilatant} and \ref{mainthmNewtonian}, based on the computation of permeability tensors $\mathcal A$ and permeability operators $\mathcal U$ using a finite element method. A list of references completes the paper.

\section{Setting of the problem and main result}\label{sec:main}
\paragraph{Geometrical setting.} The periodic porous medium is defined by a domain $\omega$ and an associated microstructure, or periodic cell $Z^{\prime}=(-1/2,1/2)^2$, which is made of two complementary parts: the fluid part $Z^{\prime}_{f}$, and the solid part $T^{\prime}$ ($Z^{\prime}_f  \bigcup T^{\prime}=Z^\prime$ and $Z^{\prime}_f  \bigcap T^{\prime}=\emptyset$). More precisely, we assume that $\omega$ is a smooth, bounded, connected set in $\mathbb{R}^2$ with smooth enough boundary $\partial\omega$,  that $n$ is the outward normal to $\partial\omega$, and that $T^{\prime}$ is an open connected subset of $Z^\prime$ with a smooth boundary $\partial T^\prime$, such that $\overline T^\prime$ is strictly included  in $Z^\prime$.

The microscale of the porous medium is a small positive number ${\epsilon}$. The domain $\omega$ is covered by a regular mesh of squares of size ${\epsilon}$: for $k^{\prime}\in \mathbb{Z}^2$, each cell $Z^{\prime}_{k^{\prime},{\epsilon}}={\epsilon}k^{\prime}+{\epsilon}Z^{\prime}$ is divided in a fluid part $Z^{\prime}_{f_{k^{\prime}},{\epsilon}}$ and a solid part $T^{\prime}_{k^{\prime},{\epsilon}}$, \emph{i.e.}\! is similar to the unit cell $Z^{\prime}$ rescaled to size ${\epsilon}$. We define $Z=Z^{\prime}\times (0,1)\subset \mathbb{R}^3$, which is divided in a fluid part $Z_{f}=Z'_f\times (0,1)$ and a solid part $T=T'\times(0,1)$, and consequently $Z_{k^{\prime},{\epsilon}}=Z^{\prime}_{k^{\prime},{\epsilon}}\times (0,1)\subset \mathbb{R}^3$, which is also divided in a fluid part $Z_{f_{k^{\prime}},{\epsilon}}$ and a solid part $T_{{k^{\prime}},{\epsilon}}$ (see Figures \ref{fig:cell} and \ref{fig:cellep}).

We denote by $\tau(\overline T'_{k',\epsilon})$ the set of all translated images of $\overline T'_{k',\epsilon}$. The set $\tau(\overline T'_{k',\epsilon})$ represents the obstacles in $\mathbb{R}^2$.

\begin{figure}[h!]
\begin{center}
\includegraphics[width=4cm]{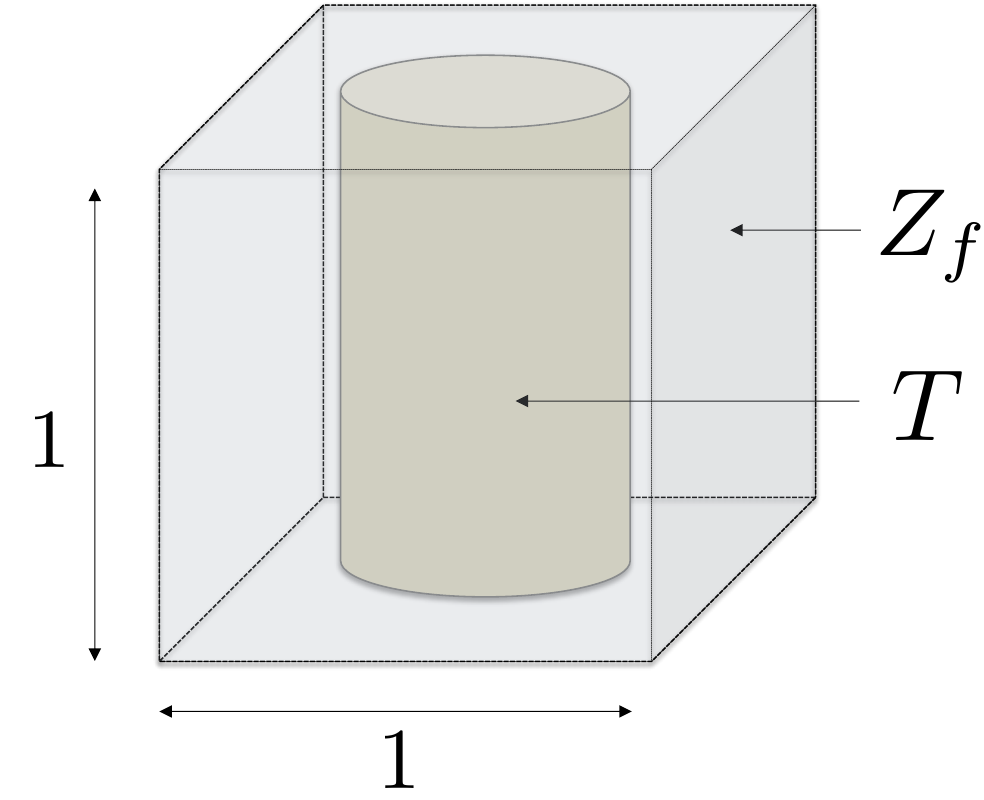}
\hspace{2.5cm}
\raisebox{.1\height}{\includegraphics[width=3cm]{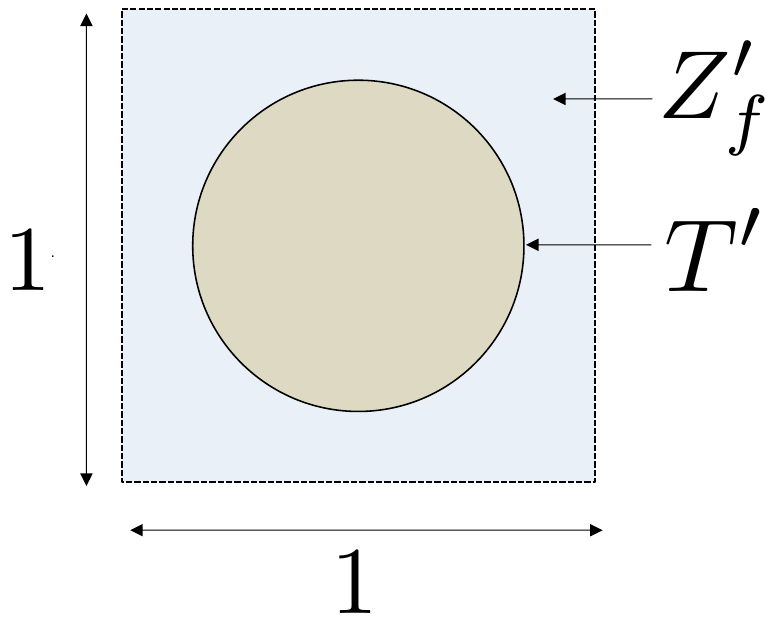}}
\end{center}
\vspace{-0.4cm}
\caption{View of the 3D reference cells  $Z$ (left) and the 2D reference cell $Z'$ (right).}
\label{fig:cell}
\end{figure}

\begin{figure}[h!]
\begin{center}
\includegraphics[width=3cm]{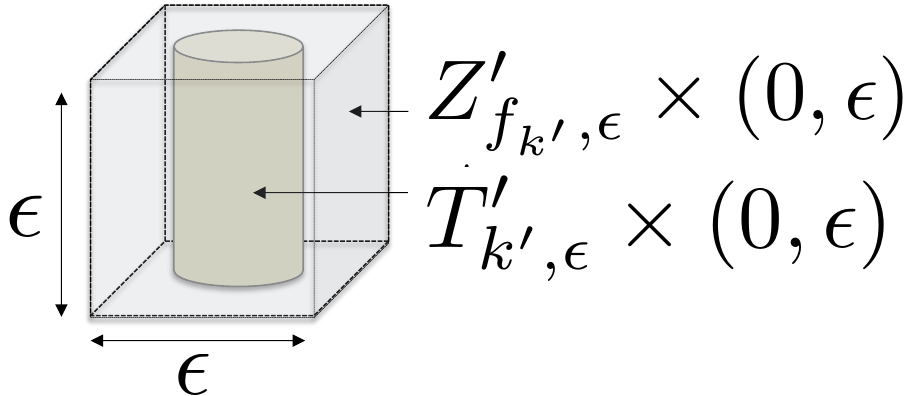}
\hspace{2.5cm}
\raisebox{.1\height}{\includegraphics[width=2cm]{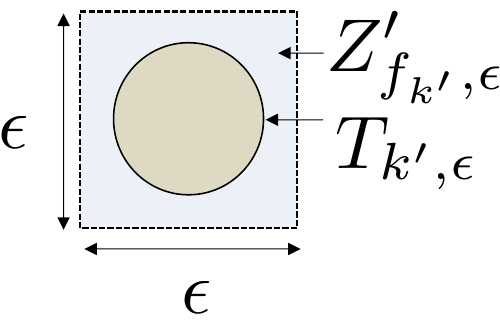}}
\end{center}
\vspace{-0.4cm}
\caption{View of the 3D reference cells  $Z_{k',\epsilon}$ (left) and the 2D reference cell $Z'_{k',\epsilon}$ (right).}
\label{fig:cellep}
\end{figure}

The fluid part of the bottom $\omega_{\epsilon}\subset \mathbb{R}^2$ of a porous medium is defined by $\omega_{\epsilon}=\omega\backslash\bigcup_{k^{\prime}\in \mathcal{K}_{\epsilon}} \overline T^{\prime}_{{k^{\prime}},{\epsilon}},$ where $\mathcal{K}_{\epsilon}=\{k^{\prime}\in \mathbb{Z}^2: Z^{\prime}_{k^{\prime}, {\epsilon}} \cap \omega \neq \emptyset \}$.  The whole fluid part $\Omega_{\epsilon}\subset \mathbb{R}^3$ in the thin porous medium is defined by (see Figure \ref{fig:omep})
\begin{equation}\label{Dominio1}
\Omega_{\epsilon}=\{  (x_1,x_2,x_3)\in \omega_{\epsilon}\times \mathbb{R}: 0<x_3<\epsilon \}.
\end{equation}
We assume that the obstacles $\tau(\overline T'_{k',\epsilon})$ do not intersect the boundary $\partial\omega$ and we denote by $S_\epsilon$ the set of the solid cylinders contained in $\Omega_\epsilon$, \emph{i.e.}\!  $S_\epsilon=\bigcup_{k^{\prime}\in \mathcal{K}_{\epsilon}} T'_{k^\prime, \epsilon}\times (0,\epsilon)$.
\begin{figure}[h!]
\begin{center}
\includegraphics[width=7.5cm]{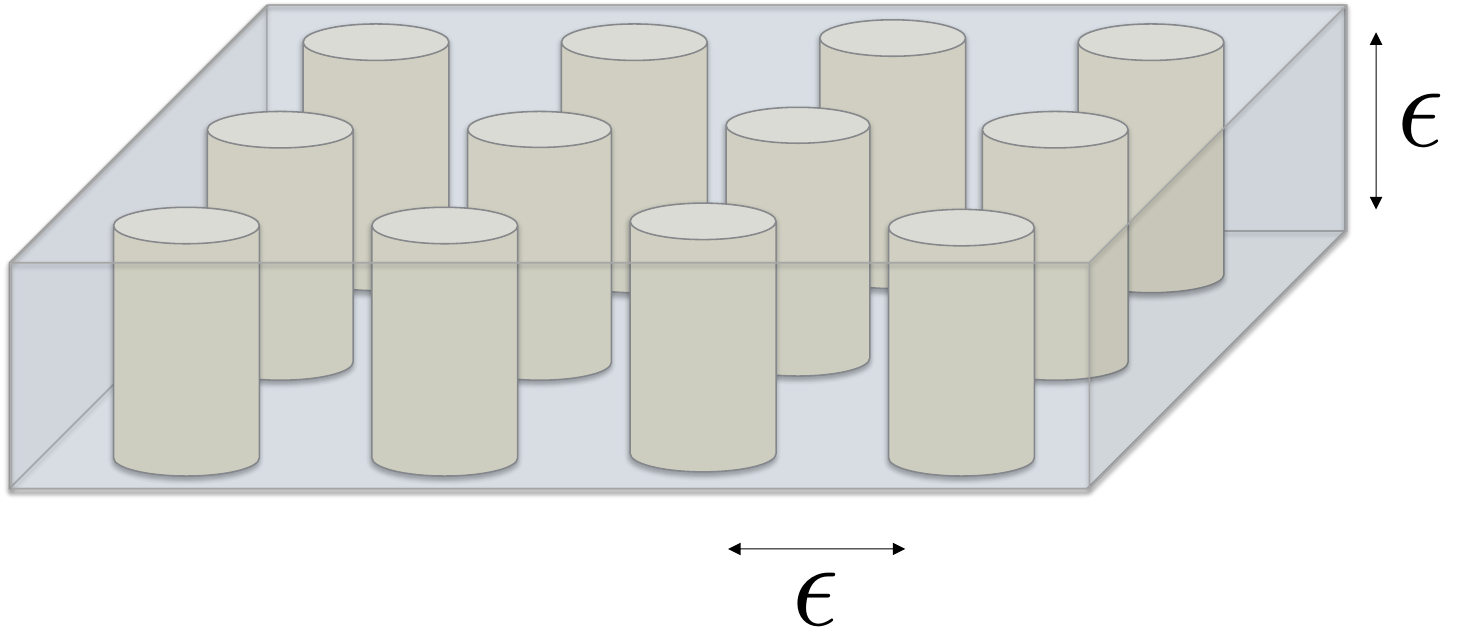}
\hspace{1cm}
\includegraphics[width=7.5cm]{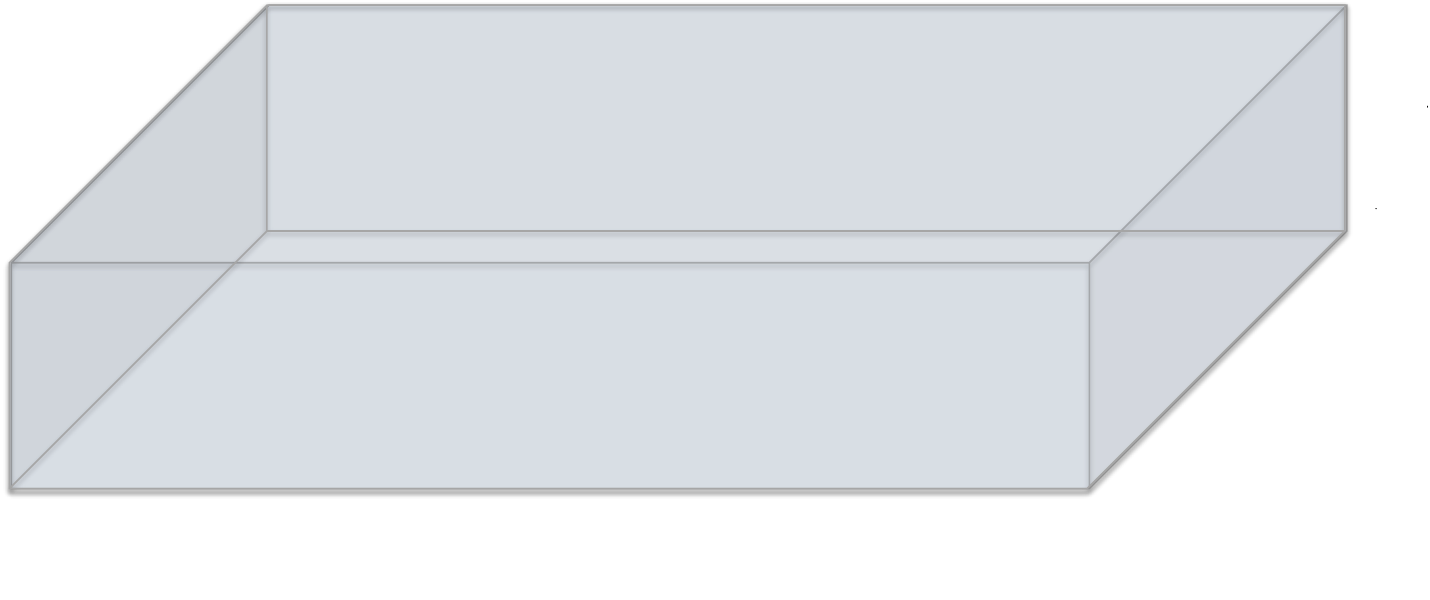}
\end{center}
\vspace{-0.4cm}
\caption{View of the thin porous media $\Omega_\epsilon$ (left) and domain without perforations $Q_\epsilon$ (right).}
\label{fig:omep}
\end{figure}
\\

We define 
\begin{equation}\label{OmegaTilde}
\widetilde{\Omega}_{\epsilon}=\omega_{\epsilon}\times (0,1), \quad \Omega=\omega\times (0,1), \quad Q_\epsilon=\omega\times (0,\epsilon).
\end{equation}
We observe that $\widetilde{\Omega}_{\epsilon}=\Omega\backslash \bigcup_{k^{\prime}\in \mathcal{K}_{\epsilon}} \overline T_{{k^{\prime}}, {\epsilon}},$ and we define $T_\epsilon=\bigcup_{k^{\prime}\in \mathcal{K}_{\epsilon}} T_{k^\prime, \epsilon}$ as the set of the solid cylinders contained in $\widetilde \Omega_\epsilon$.

To finish, we introduce some notation that will be useful throughout the paper. The points $x\in\mathbb{R}^3$ will be decomposed as $x=(x^{\prime},x_3)$ with $x^{\prime}=(x_1,x_2)\in \mathbb{R}^2$, $x_3\in \mathbb{R}$. We also use the notation $x^{\prime}$ to denote a generic vector of $\mathbb{R}^2$. 

Let $C^\infty_{\#}(Z)$ be the space of infinitely differentiable functions in $\mathbb{R}^3$ that are $Z'$-periodic. By $L^q_{\#}(Z)$ (resp. $W^{1,q}_{\#}(Z)$), $1<q<+\infty$, we denote its completion in the norm $L^q(Z)$ (resp. $W^{1,q}(Z)$) and by $L^q_{0,\#}(Z)$  the space of functions in $L^q_{\#}(Z)$ with zero mean value.

 We denote by $W^{1,q}_{0,\#}(Z_f)$ the subspace of $W^{1,q}(Z)$ composed of functions vanishing in $T$, with zero trace on $Z'\times \{0,1\}$. For $q=2$, we set $H^1(Z)=W^{1,2}(Z)$ and $H^{1}_{0,\#}(Z_f)=W^{1,2}_{0,\#}(Z_f)$. 

 For a vectorial function $\varphi=(\varphi',\varphi_3)$ and a scalar function $\psi$, we will denote $\mathbb{D}_{x^\prime}\left[\varphi\right]=\frac{1}{2}(D_{x^\prime}\varphi+D_{x^\prime}^t \varphi)$ and $\partial_{z_3}\left[\varphi\right]=\frac{1}{2}(\partial_{z_3}\varphi+\partial_{z_3}^t \varphi)$, where $\partial_{z_3}=(0,0,\frac{\partial}{\partial z_3})^t$.

Finally, we denote by $O_\epsilon$ a generic real sequence, which tends to zero with $\epsilon$ and can change from line to line, and by $C$ a generic positive constant which also can change from line to line.

\paragraph{Statement of the problem.}  We consider the following stationary Stokes system with non-linear viscosity following the {\it Carreau law}  (\ref{Carreau}) in $\Omega_\epsilon$, with a zero boundary condition on the exterior boundary $\partial Q_\epsilon$ and the cylinders $\partial S_\epsilon$:
\begin{equation}\label{1}
\left\{\begin{array}{rl}\displaystyle
\medskip
-\epsilon^\gamma{\rm div}\left(\eta_r(\mathbb{D}[u_\epsilon])\mathbb{D}[u_\epsilon]\right)+ \nabla p_\epsilon = f & \hbox{in }\Omega_\epsilon,\\
\medskip
\displaystyle
{\rm div}\,u_\epsilon=0& \hbox{in }\Omega_\epsilon,\\
\medskip
\displaystyle
u_\epsilon=0& \hbox{on }\partial Q_\epsilon\cup \partial S_\epsilon,
\end{array}\right.
\end{equation}
where the source term $f$ is of the form
\begin{equation}\label{fassump}
f (x)=(f'(x'),0)\quad \hbox{with }f'\in L^\infty(\omega)^2.
\end{equation}
Notice that the assumptions of neglecting the vertical component  of the exterior force and  the independence of the vertical variable  are usual when dealing with fluids through thin domains (see \cite{Tapiero2} for more details).

Under previous assumptions, the classical theory (see for instance \cite{Tapiero2,Bourgeat1,Lions2}), gives the existence of a unique weak  solution $(u_\epsilon, p_\epsilon)\in H^1_0(\Omega_\epsilon)^3\times L^{2}_0(\Omega_\epsilon)$, for $1<r\leq 2$, and $(u_\epsilon, p_\epsilon)\in W^{1,r}_0(\Omega_\epsilon)^3\times L^{r'}_0(\Omega_\epsilon)$ with $1/r+1/r'=1$, for $r> 2$, where  $L^{2}_0$ (respectively $L^{r'}_0$) is the space of functions of $L^2$ (respectively $L^{r'}$) with zero mean value.

The goal of this paper is to study the asymptotic behaviour of $u_{\epsilon}$ and $p_{\epsilon}$ when $\epsilon$  tends to zero. For this purpose, we use the dilatation in the variable $x_3$ as follows
\begin{equation}\label{dilatacion}
z_3=\frac{x_3}{\epsilon},
\end{equation}
which allows to define the functions   in the open set $\widetilde\Omega_{\epsilon}$, which has a fixed height. Accordingly, we define $\tilde{u}_{\epsilon}$ and $\tilde{p}_{\epsilon }$ by $$\tilde{u}_{\epsilon }(x^{\prime},z_3)=u_{\epsilon }(x^{\prime}, \epsilon z_3),\text{\ \ }\tilde{p}_{\epsilon }(x^{\prime},z_3)=p_{\epsilon }(x^{\prime}, \epsilon z_3), \text{\ \ } \textrm{ a.e.} (x^{\prime},z_3)\in \widetilde{\Omega}_{\epsilon }.$$
Moreover, associated to the change of variables (\ref{dilatacion}), we introduce the {\color{darkgreen} rescaled} operators $\mathbb{D}_{\epsilon}$, $D_{\epsilon}$, ${\rm div}_{\epsilon}$ and $\nabla_{\epsilon}$, defined by
\begin{equation*}
\mathbb{D}_{\epsilon}[\varphi]=\frac{1}{2}\left(D_{\epsilon} \varphi+D^t_{\epsilon} \varphi \right),
\end{equation*}
\begin{equation*}
(D_{\epsilon}\varphi)_{i,j}=\partial_{x_j}\varphi_i\text{\ for \ }i=1,2,3,\ j=1,2,\quad 
(D_{\epsilon}\varphi)_{i,3}=\epsilon^{-1}\partial_{z_3}\varphi_i\text{\ for \ }i=1,2,3,
\end{equation*}
\begin{equation*}
{\rm div}_{ \epsilon}\varphi={\rm div}_{x^{\prime}}\varphi^{\prime}+  \epsilon^{-1}\partial_{z_3}\varphi_3,\quad
\nabla_{ \epsilon}\psi=(\nabla_{x^{\prime}}\psi,  \epsilon^{-1}\partial_{z_3}\psi)^t.
\end{equation*}
Using the change of variable (\ref{dilatacion}), system (\ref{1}) can be rewritten as
\begin{equation}
\left\{
\begin{array}
[c]{rl}%
\medskip
\displaystyle -\epsilon^\gamma {\rm div}_{ \epsilon} \left(  \eta_r(\mathbb{D}_\epsilon[\tilde u_\epsilon])\mathbb{D}_{ \epsilon}\left[\tilde{u}_{\epsilon}\right] \right)+ \nabla_{ \epsilon} \tilde{p}_{\epsilon }
=
f & \text{\ in \ }\widetilde{\Omega}_{\epsilon },\\
\medskip
{\rm div}_{\epsilon} \tilde{u}_{\epsilon } = 0 &\text{\ in \ }\widetilde{\Omega}_{\epsilon },\\
\medskip
\tilde{u}_{\epsilon}  = 0 & \text{\ on \ } \partial {\Omega}\cup \partial T_\epsilon.
\end{array}
\right. \label{2}%
\end{equation}
Our goal is to describe the asymptotic behaviour of this new sequence $(\tilde{u}_{\epsilon}$, $\tilde{p}_{\epsilon})$. The first difficulty in performing this task is that this sequence is not defined in a fixed domain, but in the set $\widetilde{\Omega}_{\epsilon}$ that varies with $\epsilon$. Since we need convergences in fixed Sobolev spaces (defined in $\Omega$) to pass to the limit when $\epsilon$ goes to zero, we have to extend $(\tilde{u}_{\epsilon}$, $\tilde{p}_{\epsilon})$ to the whole domain $\Omega$. To this aim, we define an extension $(\tilde{u}_{\epsilon}, \tilde{P}_{\epsilon})\in W_0^{1,q}(\Omega)^3\times L^{q'}_0(\Omega)$ which coincides with $(\tilde{u}_{\epsilon}$, $\tilde{p}_{\epsilon})$ on $\widetilde{\Omega}_{\epsilon}$. Note that, for simplicity, we use the same notation $\tilde u_\epsilon$ for the velocity in $\widetilde\Omega_\epsilon$ and its continuation in $\Omega$.

Our main results are given by the following theorems. 

\begin{theorem}[Pseudoplastic fluids]\label{mainthmPseudo}
Consider $1<r<2$ and $\gamma\in \mathbb{R}$. Then, there exist $\tilde u\in H^1(0,1;L^2(\omega)^3)$, with $\tilde u=0$ on $\omega \times \{0, 1\}$ and $\tilde u_3\equiv 0$, and $\tilde P\in L^{2}_0(\omega)$, such that the extension $(\tilde u_{\epsilon},\tilde P_{\epsilon})$ of the solution of (\ref{2}) satisfies the following convergences:
$$\epsilon^{\gamma-2}\tilde u_{\epsilon }\rightharpoonup \tilde  u\quad\hbox{weakly in } H^1(0,1;L^2(\omega)^3),\quad \tilde   P_{\epsilon }\to \tilde P\quad\hbox{strongly in }L^{2}(\Omega).$$
Moreover, defining $\tilde V(x')=\int_0^1\tilde u(x',z_3)\,dz_3$, the pair $(\tilde V, \tilde P)\in L^2(\omega)^3\times (L^{2}_0(\omega)\cap H^1(\omega))$ is the unique solution of a lower-dimensional effective Darcy's law depending on the value of $\gamma$. More precisely:
\begin{itemize}
\item[--] If $\gamma\neq 1$, then $(\tilde V, \tilde P)$  is the unique solution of the linear 2D Darcy's law
\begin{equation}\label{thm:system}
\left\{\begin{array}{l}
\medskip
\displaystyle
\tilde V'(x')={1\over \eta}\mathcal{A}\left(f'(x')-\nabla_{x'}\tilde P(x')\right),\quad \tilde V_3(x')=0\quad \hbox{in }\omega,\\
\medskip
\displaystyle
{\rm div}_{x'} \tilde V'(x')=0\ \hbox{ in }\omega,\quad \tilde V'(x')\cdot n=0\ \hbox{ on }\partial\omega,
\end{array}
\right.
\end{equation}
where  
$$\eta=\left\{\begin{array}{ll}\eta_0&\hbox{ if }\gamma<1,\\
\eta_\infty&\hbox{ if }\gamma>1.
\end{array}\right.$$ 
In system~\eqref{thm:system}, the permeability tensor $\mathcal{A}\in\mathbb{R}^{2\times 2}$ is defined by its entries
\begin{equation}\label{permfuncNew}
\mathcal{A}_{ij}=\int_{Z_f}w^i_j(z)\,dz,\quad i,j=1,2,
\end{equation}
where for $i=1,2$, the pair $(w^i, \pi^i)\in H^1_{0,\#}(Z_f)^3\times L^2_{0,\#}(Z_f)$ is the unique solution of the local Stokes system
\begin{equation}\label{LocalProblemNewtonian}
\left\{\begin{array}{rl}
\medskip
\displaystyle
-\Delta_zw^i +\nabla_{z}\pi^i=e_i &\hbox{in }Z_f,
\\
\medskip
\displaystyle
{\rm div}_z w^i=0&\hbox{in }Z_f,
\\
\medskip
\displaystyle
w^i=0&\hbox{on }\partial T\cup (Z_f'\times \{0,1\}),
\\
\medskip
\displaystyle z\to  w^i, \pi^i & Z-\hbox{periodic},
\end{array}\right.
\end{equation}
and $\{e_i\}_{i=1,2,3}$ being the canonical basis of $\mathbb{R}^3$. 

\item[--] If $\gamma=1$, then $(\tilde V, \tilde P)$  is the unique solution of the non-linear 2D Darcy's law of Carreau type
\begin{equation}\label{thm:system_gamma1}
\left\{\begin{array}{l}
\medskip
\displaystyle
\tilde V'(x')= \mathcal{U}\left(f'(x')-\nabla_{x'}\tilde P(x')\right),\quad \tilde V_3(x')=0\quad \hbox{in }\omega,\\
\medskip
\displaystyle
{\rm div}_{x'} \tilde V'(x')=0\ \hbox{ in }\omega,\quad \tilde V'(x')\cdot n=0\ \hbox{ on }\partial\omega.
\end{array}
\right.
\end{equation}
The permeability operator $\mathcal{U}:\mathbb{R}^2\to \mathbb{R}^2$ appearing in system~\eqref{thm:system_gamma1} is defined by
\begin{equation}\label{permfunc}
\mathcal{U}(\xi')=\int_{Z_f}w'_{\xi'}(z)\,dz,\quad\forall\,\xi'\in\mathbb{R}^2,
\end{equation}
where, for any $\xi'\in\mathbb{R}^2$, the pair $(w_{\xi'}, \pi_{\xi'})\in H^1_{0,\#}(Z_f)^3\times L^{2}_{0,\#}(Z_f)$ is the unique solution of the local Stokes system \begin{equation}\label{LocalProblemNonNewtonian}
\left\{\begin{array}{rl}
\medskip
\displaystyle
-{\rm div}_z(\eta_r(\mathbb{D}_z[w_{\xi'}])\mathbb{D}_z[w_{\xi'}]) +\nabla_{z}\pi_{\xi'}=\xi' &\hbox{in }Z_f,
\\
\medskip
\displaystyle
{\rm div}_zw_{\xi'}=0&\hbox{in }Z_f,
\\
\medskip
\displaystyle
w_{\xi'}=0&\hbox{on }\partial T\cup (Z_f'\times \{0,1\}),
\end{array}\right.
\end{equation}
and the nonlinear viscosity $\eta_r$ is given by the  Carreau law (\ref{Carreau}).
\end{itemize}
\end{theorem}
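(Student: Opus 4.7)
The overall strategy is to pass to the limit $\epsilon\to 0$ in the weak formulation of the rescaled system~\eqref{2}, exploiting the periodic unfolding adapted to the thin porous medium (as in the excerpt preceding the statement) together with a Minty-type monotonicity argument to handle the Carreau nonlinearity in the pseudoplastic regime $1<r<2$.

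First I would derive a priori estimates. Testing~\eqref{1} with $u_\epsilon$ and using $\eta_r(\mathbb{D}[u_\epsilon])\ge \eta_\infty>0$ (which is the key feature of the Carreau viscosity for $1<r<2$, in contrast to the power law), combined with the Poincaré inequality in the thin perforated geometry (whose constant scales like $\epsilon$ because the obstacles are $\epsilon$-periodic) and Korn's inequality, yields $\|u_\epsilon\|_{L^2(\Omega_\epsilon)}\le C\epsilon^{5/2-\gamma}$ and $\|\mathbb{D}[u_\epsilon]\|_{L^2(\Omega_\epsilon)}\le C\epsilon^{3/2-\gamma}$. After dilation in $x_3$ and extension by zero in the cylinders, this translates into $\|\epsilon^{\gamma-2}\tilde u_\epsilon\|_{L^2(\Omega)}+\|\partial_{z_3}(\epsilon^{\gamma-2}\tilde u_\epsilon)\|_{L^2(\Omega)}\le C$, which explains the scaling stated in the theorem and gives the announced weak convergence in $H^1(0,1;L^2(\omega)^3)$. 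The boundary conditions $\tilde u=0$ on $\omega\times\{0,1\}$ and the identity $\tilde u_3\equiv 0$ follow from the zero trace of $\tilde u_\epsilon$ on $\partial \Omega$ and from the rescaled divergence equation $\epsilon^{-1}\partial_{z_3}\tilde u_{\epsilon,3}=-{\rm div}_{x'}\tilde u_\epsilon'$ (the vertical factor $\epsilon^{-1}$ forces the limit vertical component to vanish). For the pressure, I would use a restriction (Tartar-type) operator adapted to thin porous media to produce an extension $\tilde P_\epsilon\in L^2_0(\Omega)$ with $\|\tilde P_\epsilon\|_{L^2}\le C$, and then upgrade weak to strong convergence using the fact that $\nabla_{x'}\tilde P_\epsilon$ is controlled in $H^{-1}(\omega)$ via Tartar's duality argument.

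Next I would unfold the velocity and the pressure to obtain $(\hat u_\epsilon,\hat P_\epsilon)$ defined on $\omega\times Z$, establish the corresponding compactness (the limit $\hat u$ is $z_3$-independent in the average sense and belongs to $H^1_{0,\#}(Z_f)^3$ in the microscopic variable, with zero trace on the obstacles), and use the monotonicity of the Carreau tensor $\xi\mapsto \eta_r(\xi)\xi$ to derive the variational inequality already displayed in the excerpt, namely
\begin{equation*}
(\eta_0-\eta_\infty)\!\int_{\omega\times Z}\!(1+\lambda\epsilon^{2(1-\gamma)}|\mathbb{D}_z[\varphi]|^2)^{r/2-1}\mathbb{D}_z[\varphi]\!:\!\mathbb{D}_z[\varphi-\epsilon^{\gamma-2}\hat u_\epsilon]\,dx'dz+\eta_\infty\!\int_{\omega\times Z}\!\mathbb{D}_z[\varphi]\!:\!\mathbb{D}_z[\varphi-\epsilon^{\gamma-2}\hat u_\epsilon]\,dx'dz\ge \cdots
\end{equation*}
for suitable divergence-free test fields $\varphi$. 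Passing to the limit in this inequality is the technical heart of the proof and splits in three subcases according to the sign of $1-\gamma$: for $\gamma<1$ the factor $\lambda\epsilon^{2(1-\gamma)}|\mathbb{D}_z[\varphi]|^2\to 0$ so the Carreau term collapses to a linear one with viscosity $\eta_0=(\eta_0-\eta_\infty)+\eta_\infty$; for $\gamma>1$ the same factor blows up and, since $r/2-1<0$, the first term vanishes, leaving only the linear contribution with viscosity $\eta_\infty$; for $\gamma=1$ the full Carreau nonlinearity persists.

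Finally, by reversing the monotonicity trick (Minty) one upgrades the inequality to an equality and identifies the effective problem. For $\gamma\ne 1$ the limit equation in $\omega\times Z$ is linear Stokes, which after separation of variables yields the cell problems~\eqref{LocalProblemNewtonian} and the Darcy law~\eqref{thm:system} with the tensor~\eqref{permfuncNew}; for $\gamma=1$ one gets~\eqref{LocalProblemNonNewtonian} and the Carreau-type Darcy law~\eqref{thm:system_gamma1}. The gradient structure $\nabla_{x'}\tilde P$ for the Lagrange multiplier associated to ${\rm div}_{x'}\tilde V'=0$ arises from choosing test functions with ${\rm div}_z\varphi=0$ and letting $\varphi$ depend on $x'$ only. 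Uniqueness of the effective system (strict monotonicity in the Carreau case, ellipticity of $\mathcal A$ in the linear case) ensures that the whole sequence converges. \emph{The main obstacle} I anticipate is the justification of the limit of the nonlinear term when $\gamma=1$: one must choose admissible microscopic test fields $\varphi$ that are both $Z_f$-supported and compatible with the macroscopic divergence constraint, and then invoke a Minty-type argument adapted to the unfolded setting so as to pass from the inequality to the pointwise identification $\hat u(x',\cdot)=w_{f'(x')-\nabla_{x'}\tilde P(x')}$ in $Z_f$, from which integration in $z$ gives the permeability operator $\mathcal U$.
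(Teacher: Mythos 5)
Your proposal follows essentially the same route as the paper: energy estimates combined with the thin-domain Poincar\'e and Korn inequalities (your undilated bounds $\epsilon^{5/2-\gamma}$, $\epsilon^{3/2-\gamma}$ are exactly the paper's dilated bounds $\epsilon^{2-\gamma}$, $\epsilon^{1-\gamma}$), pressure extension by a restriction operator and duality, periodic unfolding, the monotonicity/Minty variational inequality with the test field scaled so that the factor $\lambda\epsilon^{2(1-\gamma)}$ appears, the same trichotomy on the sign of $1-\gamma$, and finally the orthogonal decomposition and cell problems yielding the Darcy laws. The only differences are cosmetic: the paper simply cites its earlier work for $\gamma=1$ rather than redoing the Minty argument, and your parenthetical that the unfolded limit $\hat u$ is ``$z_3$-independent in the average sense'' is inaccurate (only the limit pressure is $z_3$-independent), but this does not affect the argument.
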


\begin{remark}
 According to \cite[Theorem 1.1]{Allaire0}, the permeability tensor $\mathcal{A}$ is  symmetric and definite positive. 
\end{remark}

\begin{theorem}[Dilatant fluids]\label{mainthmDilatant}
Consider $r>2$ and $\gamma\in\mathbb{R}$. We divide the theorem depending on the value of $\gamma$:
\begin{itemize}
\item[$(i)$]   If $\gamma<1$, then there exist $\tilde u\in H^1(0,1;L^2(\omega)^3)$, with $\tilde u=0$ on $\omega \times \{0, 1\}$ and $\tilde u_3\equiv 0$,  and $\tilde P\in L^{r'}_0(\omega)$, such that the extension $(\tilde u_{\epsilon},\tilde P_{\epsilon})$ of the solution of (\ref{2}) satisfies the following convergences:
$$\epsilon^{\gamma-2}\tilde u_{\epsilon }\rightharpoonup \tilde  u\quad\hbox{weakly in } H^1(0,1;L^2(\omega)^3),\quad \tilde   P_{\epsilon }\to \tilde P\quad\hbox{strongly in }L^{r'}(\Omega).$$

Moreover, defining $\tilde V(x')=\int_0^1\tilde u(x',z_3)\,dz_3$, then the pair $(\tilde V, \tilde P)\in L^2(\omega)^3\times (L^{2}_0(\omega)\cap H^1(\omega))$ is the unique solution of the linear 2D Darcy's law (\ref{thm:system})$-$(\ref{LocalProblemNewtonian}) with $\eta=\eta_0$.

\item[$(ii)$] If $\gamma>1$, then there exist $\tilde u\in W^{1,r}(0,1;L^r(\omega)^3)$, with $\tilde u=0$ on $\omega \times \{0, 1\}$ and $\tilde u_3\equiv 0$, and $\tilde P\in L^{r'}_0(\omega)$, such that the extension $(\tilde u_{\epsilon},\tilde P_{\epsilon})$ of the solution of (\ref{2}) satisfies the following convergences:
$$\epsilon^{\gamma-r\over r-1}\tilde u_{\epsilon }\rightharpoonup \tilde  u\quad\hbox{weakly in } W^{1,r}(0,1;L^r(\omega)^3),\quad \tilde   P_{\epsilon }\to \tilde P\quad\hbox{strongly in }L^{r'}(\Omega).$$
Moreover, defining $\tilde V(x')=\int_0^1\tilde u(x',z_3)\,dz_3$, the pair $(\tilde V, \tilde P)\in L^r(\omega)^3\times (L^{r'}_0(\omega)\cap W^{1,r'}(\omega))$ is the unique solution of the lower-dimensional  effective non-linear Darcy's law
\begin{equation}\label{thm:system_22}
\left\{\begin{array}{l}
\medskip
\displaystyle
\tilde V'(x')=\frac{1}{\lambda^{\frac{2-r'}{2}}(\eta_0-\eta_{\infty})^{r'-1}}\mathcal{U}\left(f'(x')-\nabla_{x'}\tilde P(x')\right), \color{black} \quad \tilde  V_3(x')=0\quad \hbox{in }\omega,\\
\medskip
\displaystyle
{\rm div}_{x'} \tilde V'(x')=0\ \hbox{ in }\omega,\quad \tilde  V'(x')\cdot n=0\ \hbox{ on }\partial\omega.
\end{array}
\right.
\end{equation}
The permeability operator $\mathcal{U}:\mathbb{R}^2\to \mathbb{R}^2$ appearing in system~\eqref{thm:system_22} is defined by \eqref{permfunc} where, for any $\xi'\in\mathbb{R}^2$,  $w_{\xi'}$ is the unique solution of the local Stokes system \eqref{LocalProblemNonNewtonian} with nonlinear viscosity of type power law given by $\eta_r(\mathbb{D}_z[w_{\xi'}])=|\mathbb{D}_z[w_{\xi'}]|^{r-2}$.
\item[(iii)] If $\gamma=1$, then there exist $\tilde u\in W^{1,r}(0,1;L^r(\omega)^3)$, with $\tilde u=0$ on $\omega \times \{0, 1\}$ and $\tilde u_3\equiv 0$ and $\tilde P\in L^{r'}_0(\omega)$, such that the extension $(\tilde u_{\epsilon},\tilde P_{\epsilon})$ of the solution of (\ref{2}) satisfies the following convergences:
$$\epsilon^{-1}\tilde u_{\epsilon }\rightharpoonup \tilde  u\quad\hbox{weakly in } W^{1,r}(0,1;L^r(\omega)^3),\quad \tilde   P_{\epsilon }\to \tilde P\quad\hbox{strongly in }L^{r'}(\Omega).$$
Moreover, defining $\tilde V(x')=\int_0^1\tilde u(x',z_3)\,dz_3$, the pair $(\tilde V, \tilde P)\in L^r(\omega)^3\times (L^{r'}_0(\omega)\cap W^{1,r'}(\omega))$ is the unique solution of the lower-dimensional  effective non-linear 2D Darcy's law of Carreau type (\ref{thm:system_gamma1}). For every $\xi'\in \R^2$, $\mathcal{U}(\xi')$ is defined by (\ref{permfunc}) where $(w_{\xi'}, \pi_{\xi'})\in W^{1,r}_{0,\#}(Z_f)^3\times L^{r'}_{0,\#}(Z_f)$ is the unique solution of the local Stokes system (\ref{LocalProblemNonNewtonian}) with nonlinear viscosity given by the Carreau law (\ref{Carreau}).
\end{itemize}

\end{theorem}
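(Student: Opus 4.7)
The plan is to adapt to the dilatant regime $r>2$ the unfolding-based strategy used for Theorem~\ref{mainthmPseudo}, the key new ingredient being a velocity rescaling $\alpha_\epsilon$ chosen differently in each of the three cases to reflect which branch of the Carreau law drives the energy balance. First, testing \eqref{2} with $\tilde u_\epsilon$ itself and using the dilatant lower bounds $\eta_r(\xi)|\xi|^2\geq \eta_\infty|\xi|^2$ and $\eta_r(\xi)|\xi|^2\geq(\eta_0-\eta_\infty)\lambda^{r/2-1}|\xi|^r$, together with the anisotropic Poincar\'e inequality $\|\tilde u_\epsilon\|_{L^q(\widetilde\Omega_\epsilon)}\leq C\|\partial_{z_3}\tilde u_\epsilon\|_{L^q(\widetilde\Omega_\epsilon)}$ granted by the vanishing trace on $\omega\times\{0,1\}$, yields simultaneous $L^2$ and $L^r$ bounds on $\mathbb{D}_\epsilon[\tilde u_\epsilon]$. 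Balancing these against the outer factor $\epsilon^\gamma$ singles out the proper scaling $\alpha_\epsilon$: the $L^2$ bound is saturated first when $\gamma<1$, giving $\alpha_\epsilon=\epsilon^{\gamma-2}$; the $L^r$ bound is saturated first when $\gamma>1$, giving $\alpha_\epsilon=\epsilon^{(\gamma-r)/(r-1)}$; the two scales coincide at $\alpha_\epsilon=\epsilon^{-1}$ when $\gamma=1$. A Bogovskii-type construction then produces uniform bounds on $\tilde P_\epsilon$ in $L^2_0(\Omega)$ or $L^{r'}_0(\Omega)$, depending on the case.

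Second, applying the periodic unfolding of Section~\ref{sec:unfolding} to $(\alpha_\epsilon\tilde u_\epsilon,\tilde P_\epsilon)$ provides weak limits $(\hat u,\tilde P)$ on $\omega\times Z$ in the appropriate Sobolev spaces, with $\hat u$ vanishing on $\partial T\cup (Z_f'\times\{0,1\})$ and with the averaged incompressibility ${\rm div}_{x'}\int_0^1\hat u'\,dz_3=0$. The crucial step is then to pass to the limit in the unfolded variational inequality, which takes the form
\begin{equation*}
\int_{\omega\times Z}\!\bigl[(\eta_0\!-\!\eta_\infty)(1\!+\!\lambda\epsilon^{2(1-\gamma)}|\mathbb{D}_z[\varphi]|^2)^{{r\over 2}-1}\!+\!\eta_\infty\bigr]\mathbb{D}_z[\varphi]\!:\!\mathbb{D}_z[\varphi\!-\!\alpha_\epsilon \hat u_\epsilon]\,dx'dz \geq \int_{\omega\times Z}\!f'\!\cdot\!(\varphi'\!-\!\alpha_\epsilon\hat u_\epsilon')\,dx'dz + O_\epsilon,
\end{equation*}
after testing with admissible $\varphi$ and absorbing pressure terms. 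In the dilatant regime the behaviour of the Carreau modulus is opposite to the pseudoplastic case: when $\gamma<1$ the factor $(1+\lambda\epsilon^{2(1-\gamma)}|\mathbb{D}_z[\varphi]|^2)^{r/2-1}$ tends to $1$ and only the linear Newtonian viscosity $\eta_0$ survives; when $\gamma>1$, rescaling the test function as $\varphi=\epsilon^{(\gamma-1)/(r-1)}\psi$ absorbs the $\epsilon^{2(1-\gamma)}$ factor and makes the Carreau term asymptotically $\lambda^{r/2-1}(\eta_0-\eta_\infty)|\mathbb{D}_z[\psi]|^{r-2}$, i.e.\ a power law; and for $\gamma=1$ the full Carreau form is preserved. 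The non-linear limit is identified in each case via Minty's monotonicity trick applied to the strictly monotone operator associated with the relevant constitutive law.

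Third, integrating the limit micro-scale problem in $z_3$ and testing with $Z'$-periodic divergence-free vector fields extracts the cell systems \eqref{LocalProblemNewtonian} in case $(i)$ and \eqref{LocalProblemNonNewtonian} (with the viscosity specified in the statement) in cases $(ii)$ and $(iii)$, thereby identifying $\tilde V(x')=\int_0^1\hat u(x',z_3)\,dz_3$ with the announced effective Darcy laws and the permeability tensor $\mathcal A$ or operator $\mathcal U$. Strict monotonicity of the cell operator gives uniqueness of $(\tilde V,\tilde P)$ and, by a standard contradiction argument, upgrades the convergences from subsequences to the whole family. The main obstacle I anticipate is the simultaneous handling of the velocity rescaling $\alpha_\epsilon$ and the $\epsilon$-dependent coefficient $\epsilon^{2(1-\gamma)}$ inside the Carreau modulus: choosing $\alpha_\epsilon$ so that the resulting limiting energy is neither degenerate nor infinite, and then rescaling the test functions so that the Minty argument converges to a non-trivial monotone operator, is precisely what separates the three regimes and constitutes the most technical step of the proof, particularly in case $(ii)$ where the transition from the Carreau modulus to a pure power law must be carried out rigorously in the unfolded formulation.
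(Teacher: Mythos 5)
Your proposal is correct and follows essentially the same route as the paper: the dual $L^2$/$L^r$ energy bounds obtained by testing with $\tilde u_\epsilon$ and comparing the resulting exponents to select the scaling $\alpha_\epsilon$ in each regime, the unfolded Minty-type variational inequality, the test-function rescaling $\varphi=\epsilon^{(\gamma-1)/(r-1)}\psi$ that turns the Carreau modulus into the power law when $\gamma>1$ (via $\epsilon^{(\gamma-1)\frac{r-2}{r-1}}(1+\lambda\epsilon^{2\frac{1-\gamma}{r-1}}|\mathbb{D}_z[\varphi]|^2)^{\frac{r}{2}-1}=(\epsilon^{2\frac{\gamma-1}{r-1}}+\lambda|\mathbb{D}_z[\varphi]|^2)^{\frac{r}{2}-1}$), and the decoupling through the cell problems are exactly the paper's Steps 1--4. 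The only cosmetic deviation is that you invoke a Bogovskii-type construction for the pressure where the paper uses a restriction operator plus De Rham duality, which yields the same $L^{r'}$ bounds.
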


\begin{remark}
\begin{itemize}
\item[--]  According to \cite[Lemma 2]{Bourgeat2}, the permeability operator $\mathcal{U}$ is  coercive and strictly monotone. 
 
 \item[--]  
 In case $r>2$ and $\gamma<1$, we derive a linear 2D Darcy's law, contrary to what is obtained  in~\cite{Tapiero2}, where the filtration velocity is zero (\emph{i.e.}\! $\tilde V\equiv 0$). This difference can be explained by the fact that the velocity estimates used in~\cite{Tapiero2} were not optimal in that case. 
 
 \end{itemize}
\end{remark}

\begin{theorem}[Newtonian fluids]\label{mainthmNewtonian}
Let $r=2$ and $\gamma\in \mathbb{R}$. Then, there exist $\tilde u\in H^1(0,1;L^2(\omega)^3)$, with $\tilde u=0$ on $\omega \times \{0, 1\}$ and $\tilde u_3\equiv 0$ and $\tilde P\in L^{2}_0(\omega)$, such that the extension $(\tilde u_{\epsilon},\tilde P_{\epsilon})$ of the solution of (\ref{2}) satisfies the following convergences:
$$\epsilon^{\gamma-2}\tilde u_{\epsilon }\rightharpoonup \tilde  u\quad\hbox{weakly in } H^1(0,1;L^2(\omega)^3),\quad \tilde   P_{\epsilon }\to \tilde P\quad\hbox{strongly in }L^{2}(\Omega).$$
Moreover, defining $\tilde V(x')=\int_0^1\tilde u(x',z_3)\,dz_3$, the pair $(\tilde V, \tilde P)\in L^2(\omega)^3\times (L^{2}_0(\omega)\cap H^1(\omega))$ is the unique solution of the linear 2D Darcy's law
\begin{equation}\label{thm:systemNewtonian}
\left\{\begin{array}{l}
\medskip
\displaystyle
\tilde V'(x')={1\over \eta_0}\mathcal{A}\left(f'(x')-\nabla_{x'}\tilde P(x')\right),\quad \tilde V_3(x')=0\quad \hbox{in }\omega,\\
\medskip
\displaystyle
{\rm div}_{x'} \tilde V'(x')=0\ \hbox{ in }\omega,\quad \tilde V'(x')\cdot n=0\ \hbox{ on }\partial\omega.
\end{array}
\right.
\end{equation}
The permeability tensor $\mathcal{A}\in\mathbb{R}^{2\times 2}$ appearing in system~\eqref{thm:systemNewtonian} is defined by its entries
\begin{equation}\label{permfuncNewNewtonian}
\mathcal{A}_{ij}=\int_{Z_f}w^i_j(z)\,dz,\quad i,j=1,2,
\end{equation}
where, setting $\{e_k\}_{k=1,2,3}$ the canonical basis of $\R^3$, for $i=1,2$, the pair $(w^i, \pi^i)\in H^1_{0,\#}(Z_f)^3\times L^2_{0,\#}(Z_f)$, is the unique solution of the local Stokes system
\begin{equation}\label{LocalProblemNewtonianNewtonian}
\left\{\begin{array}{rl}
\medskip
\displaystyle
-\Delta_zw^i +\nabla_{z}\pi^i=e_i &\hbox{in }Z_f,
\\
\medskip
\displaystyle
{\rm div}_z w^i=0&\hbox{in }Z_f,
\\
\medskip
\displaystyle
w^i=0&\hbox{on }\partial T\cup (Z_f'\times \{0,1\}),
\\
\medskip
\displaystyle z\to  w^i, \pi^i & Z-\hbox{periodic}.
\end{array}\right.
\end{equation}

\end{theorem}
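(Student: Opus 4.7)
The plan is to exploit the fact that for $r=2$ the Carreau law \eqref{Carreau} collapses to a constant viscosity: $\eta_r(\mathbb D[u]) \equiv (\eta_0-\eta_\infty)\cdot 1 + \eta_\infty = \eta_0$. Consequently system \eqref{2} reduces to the \emph{linear} rescaled Stokes problem
$$-\epsilon^{\gamma}\eta_0\,{\rm div}_{\epsilon}\bigl(\mathbb D_\epsilon[\tilde u_\epsilon]\bigr)+\nabla_\epsilon \tilde p_\epsilon=f \quad\text{in }\widetilde\Omega_\epsilon,$$
and the analysis follows the same three-step homogenization strategy as the non-critical case $\gamma\neq 1$ of Theorem~\ref{mainthmPseudo}, but simplified since no monotonicity argument or nonlinear viscosity needs to be tracked. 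In particular, the answer does not depend on $\gamma$ because the scaling $\epsilon^{\gamma-2}$ exactly absorbs the $\epsilon^\gamma$ prefactor against the dominant $\epsilon^{-2}\partial_{z_3}^2$ part of $\Delta_\epsilon$.

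First, I would derive the a priori estimates by testing the weak formulation with $\tilde u_\epsilon$ itself. Combining the Korn inequality in $\widetilde\Omega_\epsilon$ with the Poincar\'e inequality in the thin vertical direction (here of size $1$ after dilation, but using that $\tilde u_\epsilon$ vanishes on $\omega\times\{0,1\}$), the equality
$$\epsilon^\gamma\eta_0\int_{\widetilde\Omega_\epsilon}|\mathbb D_\epsilon[\tilde u_\epsilon]|^2 = \int_{\widetilde\Omega_\epsilon}f\cdot \tilde u_\epsilon$$
yields $\|\partial_{z_3}\tilde u_\epsilon\|_{L^2(\widetilde\Omega_\epsilon)^{3}}+\|\tilde u_\epsilon\|_{L^2(\widetilde\Omega_\epsilon)^{3}}\le C\epsilon^{2-\gamma}$, so that $\epsilon^{\gamma-2}\tilde u_\epsilon$, extended by zero to $\Omega$, is uniformly bounded in $H^1(0,1;L^2(\omega)^3)$. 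Next I would extend the pressure $\tilde p_\epsilon$ to $\tilde P_\epsilon\in L^2_0(\Omega)$ using the Tartar-type restriction operator adapted to the thin porous setting (as in the analogous step of Theorem~\ref{mainthmPseudo}) and then apply a De Rham / dual-norm argument to the momentum equation to conclude $\|\tilde P_\epsilon\|_{L^2(\Omega)}\le C$ and strong convergence $\tilde P_\epsilon\to \tilde P$ in $L^2(\Omega)$, with $\tilde P$ independent of $z_3$ because $\partial_{z_3}\tilde P_\epsilon =O(\epsilon)$ in the distribution sense.

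Third, I would apply the periodic unfolding method of~\cite{CDG} to pass to the limit in the weak formulation of \eqref{2}. By linearity of the Stokes operator with constant viscosity $\eta_0$, the unfolded limit velocity $\hat u(x',z)$ satisfies, for a.e.\ $x'\in\omega$, the local Stokes system in $Z_f$ with data $f'(x')-\nabla_{x'}\tilde P(x')$, and therefore decomposes by superposition as
$$\hat u(x',z)=\frac{1}{\eta_0}\sum_{i=1}^{2}\bigl(f_i(x')-\partial_{x_i}\tilde P(x')\bigr)\,w^i(z),$$
where $(w^i,\pi^i)$ solves \eqref{LocalProblemNewtonianNewtonian}. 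Averaging over $z\in Z$ and recognising the entries \eqref{permfuncNewNewtonian} of $\mathcal A$ gives the constitutive relation in \eqref{thm:systemNewtonian}; the incompressibility $\mathrm{div}_{x'}\tilde V'=0$ in $\omega$ and the boundary condition $\tilde V'\cdot n=0$ on $\partial \omega$ are obtained by passing to the limit in $\mathrm{div}_\epsilon \tilde u_\epsilon=0$ and in the zero trace of $\tilde u_\epsilon$ on $\partial Q_\epsilon$. Uniqueness of $(\tilde V,\tilde P)$ follows from positive definiteness of $\mathcal A$.

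The only genuinely delicate point, exactly as in the corresponding subcase of Theorem~\ref{mainthmPseudo}, is the construction of the pressure extension $\tilde P_\epsilon$ and the proof of its strong $L^2$ convergence together with its independence of $z_3$; the nonlinearity of Carreau being absent for $r=2$, the identification of the two-scale limit itself is routine by linearity and requires no monotonicity machinery.
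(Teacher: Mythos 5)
Your proposal is correct and follows essentially the same route as the paper: for $r=2$ the Carreau viscosity reduces to the constant $\eta_0$, the \emph{a priori} estimates and the pressure extension are exactly those of the case $1<r\le 2$ (Poincar\'e--Korn in the thin domain, restriction operator plus De Rham and Ne\v{c}as), and the passage to the limit in the unfolded variational formulation is done directly by linearity, without Minty's lemma, before decoupling via the cell problems \eqref{LocalProblemNewtonianNewtonian} to obtain \eqref{thm:systemNewtonian}. No substantive difference from the paper's argument.
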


\begin{remark}
 According to \cite[Theorem 1.1]{Allaire0}, the permeability tensor $\mathcal{A}$ is  symmetric and definite positive. 
\end{remark}

\section{Proof of the main results}\label{sec:proofs}
In this section we provide the proof of the main results (Theorems \ref{mainthmPseudo}, \ref{mainthmDilatant} and \ref{mainthmNewtonian}). To this aim, we first establish  some {\it a priori} estimates of the solution of (\ref{2}) and we define its extension. Second, we  introduce the version of the 
unfolding method depending on $\epsilon$.  Next, we establish a compactness result, which is the main key for passing to the limit in the system, and conclude the proof of the Theorems.

\subsection{{\it A priori} estimates}\label{sec:estimates}

In this subsection, we establish sharp {\it a priori} estimates on the dilated solution in $\widetilde \Omega_\epsilon$. One key ingredient are Poincar\'e and Korn inequalities in $\widetilde\Omega_\epsilon$, which are proved in~\cite{Anguiano_SuarezGrau}.
\begin{lemma}[Remark 4.3-(i) in \cite{Anguiano_SuarezGrau}] \label{Lemma_Poincare} We have the following two estimates in thin domains:
\begin{itemize}
\item[$(i)$] For every $\tilde  \varphi\in W^{1,q}_0(\widetilde \Omega_\epsilon)^3$, $1\leq q<+\infty$, there exists a positive constant $C$, independent of $\epsilon$, such that
\begin{equation}\label{Poincare}
\|\tilde \varphi\|_{L^q(\widetilde \Omega_\epsilon)^3}\leq C\epsilon\|D_\epsilon \tilde  \varphi\|_{L^q(\widetilde \Omega_\epsilon)^{3\times 3}},\quad (\hbox{Poincar\'e inequality}).
\end{equation}
\item[$(ii)$] For every $\tilde  \varphi\in W^{1,q}_0(\widetilde \Omega_\epsilon)^3$, $1< q<+\infty$, there exists a positive constant $C$, independent of $\epsilon$, such that
\begin{equation}\label{Korn}
\|D_\epsilon \tilde  \varphi\|_{L^q(\widetilde \Omega_\epsilon)^{3\times 3}}\leq C\|\mathbb{D}_\epsilon[\tilde  \varphi]\|_{L^q(\widetilde \Omega_\epsilon)^{3\times 3}},\quad (\hbox{Korn inequality}).
\end{equation}
\end{itemize}
\end{lemma}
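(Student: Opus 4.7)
The plan is to prove the two inequalities separately, exploiting different portions of the Dirichlet boundary of $\widetilde\Omega_\epsilon$: for (\ref{Poincare}), the vanishing of $\tilde\varphi$ on the top and bottom faces $\omega_\epsilon\times\{0,1\}$; for (\ref{Korn}), the vanishing on the lateral obstacles, combined with a cell-by-cell rescaling to a fixed reference geometry.

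For the Poincar\'e inequality~(\ref{Poincare}), I would start from the pointwise representation $\tilde\varphi(x',z_3)=\int_0^{z_3}\partial_{s}\tilde\varphi(x',s)\,ds$, valid since $\tilde\varphi$ vanishes on $\omega_\epsilon\times\{0\}$. H\"older's inequality in $s\in(0,1)$ followed by integration in $(x',z_3)\in\widetilde\Omega_\epsilon$ yields $\|\tilde\varphi\|_{L^q(\widetilde\Omega_\epsilon)^3}\leq \|\partial_{z_3}\tilde\varphi\|_{L^q(\widetilde\Omega_\epsilon)^3}$. The identity $(D_\epsilon\tilde\varphi)_{\cdot,3}=\epsilon^{-1}\partial_{z_3}\tilde\varphi$ built into the definition of the rescaled gradient then produces the required factor $\epsilon$, giving~(\ref{Poincare}).

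For the Korn inequality~(\ref{Korn}), I would first undo the dilation~(\ref{dilatacion}) by setting $\varphi(x',x_3)=\tilde\varphi(x',x_3/\epsilon)$. Then $\varphi\in W^{1,q}_0(\Omega_\epsilon)^3$, and the change of variables yields $\|D\varphi\|_{L^q(\Omega_\epsilon)}^q=\epsilon\,\|D_\epsilon\tilde\varphi\|_{L^q(\widetilde\Omega_\epsilon)}^q$ and similarly for $\mathbb{D}$, so it suffices to establish a Korn inequality on the perforated thin domain $\Omega_\epsilon$ with constant independent of $\epsilon$. For this, I would decompose $\Omega_\epsilon$ into the cells $\{Z_{f_{k'},\epsilon}\}_{k'\in\mathcal{K}_\epsilon}$, rescale each cell isotropically via $y=(x-\epsilon(k',0))/\epsilon$ to the fixed reference fluid cell $Z_f$, and apply on $Z_f$ the classical Korn inequality for functions vanishing on $\partial T\cup (Z'_f\times\{0,1\})$. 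Both sides of the cell-wise estimate acquire the same Jacobian factor $\epsilon^{3-q}$ under the scaling, so the local constant is preserved; raising to the $q$-th power, summing over $k'\in\mathcal{K}_\epsilon$, and translating back via the dilation yields~(\ref{Korn}).

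The main technical point is the cell-wise Korn inequality on $Z_f$ with a constant $C_0$ depending only on $Z_f$ and $q$: for $1<q<\infty$, it follows from the standard theory since $Z_f$ is a bounded Lipschitz domain and $\partial T$ has positive Hausdorff $2$-measure, so that the only rigid motion vanishing on $\partial T$ is the trivial one. The assumption that the obstacles do not meet $\partial\omega$ ensures that every cell indexed by $\mathcal{K}_\epsilon$ contains a complete obstacle, so the cell-wise estimate applies uniformly and no boundary correction is needed in the summation step.
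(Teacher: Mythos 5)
Your argument is correct and is essentially the standard proof of this result, which the paper itself does not reprove but imports from Remark 4.3-(i) of \cite{Anguiano_SuarezGrau}: the vertical integration plus the factor $\epsilon^{-1}$ in $(D_\epsilon\tilde\varphi)_{\cdot,3}$ for \eqref{Poincare}, and undoing the dilation followed by a cell-by-cell isotropic rescaling and a uniform Korn inequality on the reference cell for \eqref{Korn}. One small imprecision: for the cells of $\mathcal{K}_\epsilon$ cut by $\partial\omega$ the right fix is not that they ``contain a complete obstacle'' (they need not, and the obstacle is irrelevant here), but that the zero extension of $\varphi$ to the full cube $Z'_{k',\epsilon}\times(0,\epsilon)$ still vanishes on the top and bottom faces, which already rules out all rigid motions and lets the same reference-cell Korn constant apply.
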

Inequalities~\eqref{Poincare}-\eqref{Korn} allow us to derive estimates for the velocity  $\tilde u_\epsilon$ in $\widetilde \Omega_\epsilon$.
\begin{lemma} \label{Estimates_lemma}  The velocity $\tilde u_\epsilon$ solution of (\ref{2}) satisfies the following estimates, depending on the value of parameters $r$ and $\gamma$.
\begin{itemize}
\item[$(i)$] {\it (Pseudoplastic fluid and Newtonian fluid)} Assume that $1<r\leq 2$. There exists a positive constant $C$, independent of $\epsilon$, such that for every value of $\gamma$,
\begin{equation}\label{estimates_u_tilde}
\|\tilde u_\epsilon\|_{L^2(\widetilde \Omega_\epsilon)^{3}}\leq C\epsilon^{2-\gamma} ,\quad \|D_\epsilon \tilde u_\epsilon\|_{L^2(\widetilde \Omega_\epsilon)^{3\times 3}}\leq C\epsilon^{1-\gamma} ,\quad \|\mathbb{D}_\epsilon[\tilde u_\epsilon]\|_{L^2(\widetilde \Omega_\epsilon)^{3\times 3}}\leq C\epsilon^{1-\gamma} \,.
\end{equation}
\item[$(ii)$] {\it (Dilatant fluid)} Assume that $r>2$. There exists a positive constant $C$, independent of $\epsilon$, such that estimates (\ref{estimates_u_tilde}) hold true. Also, depending on the value of $\gamma$, we have:
\begin{itemize}
\item if $\gamma<1$,
\begin{equation}\label{estimates_u_tilde2less1}
\|\tilde u_\epsilon\|_{L^r(\widetilde \Omega_\epsilon)^{3}}\leq C\epsilon^{-{2\over r}(\gamma-1)+1} ,\  \|D_\epsilon \tilde u_\epsilon\|_{L^r(\widetilde \Omega_\epsilon)^{3\times 3}}\leq C\epsilon^{-{2\over r}(\gamma-1)} ,\  \|\mathbb{D}_\epsilon[\tilde u_\epsilon]\|_{L^r(\widetilde \Omega_\epsilon)^{3\times 3}}\leq C\epsilon^{-{2\over r}(\gamma-1)} \,,
\end{equation}
\item if $\gamma>1$,
\begin{equation}\label{estimates_u_tilde2greater1}
\|\tilde u_\epsilon\|_{L^r(\widetilde \Omega_\epsilon)^{3}}\leq C\epsilon^{{-{\gamma-1\over r-1}}+1} ,\quad \|D_\epsilon \tilde u_\epsilon\|_{L^r(\widetilde \Omega_\epsilon)^{3\times 3}}\leq C\epsilon^{{-{\gamma-1\over r-1}}} ,\quad \|\mathbb{D}_\epsilon[\tilde u_\epsilon]\|_{L^r(\widetilde \Omega_\epsilon)^{3\times 3}}\leq C\epsilon^{{-{\gamma-1\over r-1}}} \,,
\end{equation}
\item if $\gamma=1$,
\begin{equation}\label{estimates_u_tilde2equal1}
\|\tilde u_\epsilon\|_{L^r(\widetilde \Omega_\epsilon)^{3}}\leq C\epsilon ,\quad \|D_\epsilon \tilde u_\epsilon\|_{L^r(\widetilde \Omega_\epsilon)^{3\times 3}}\leq C ,\quad \|\mathbb{D}_\epsilon[\tilde u_\epsilon]\|_{L^r(\widetilde \Omega_\epsilon)^{3\times 3}}\leq C \,.
\end{equation}
\end{itemize}

\end{itemize}
\end{lemma}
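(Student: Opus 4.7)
My strategy is the classical energy method: test the weak formulation of (\ref{2}) with the solution $\tilde u_\epsilon$ itself and combine the coercivity of the Carreau viscosity with the Poincaré--Korn inequalities of Lemma~\ref{Lemma_Poincare}. Multiplying the momentum equation by $\tilde u_\epsilon$, integrating over $\widetilde\Omega_\epsilon$, and using the divergence-free condition together with the homogeneous boundary data gives the energy identity
\begin{equation*}
\epsilon^\gamma\int_{\widetilde\Omega_\epsilon}\eta_r(\mathbb{D}_\epsilon[\tilde u_\epsilon])\,|\mathbb{D}_\epsilon[\tilde u_\epsilon]|^2\,dx'dz_3=\int_{\widetilde\Omega_\epsilon}f'\cdot\tilde u_\epsilon'\,dx'dz_3.
\end{equation*}
From (\ref{Carreau}) and the trivial inequality $(1+\lambda t^2)^{r/2-1}\ge 0$ one has the universal bound $\eta_r(\xi)\ge\eta_\infty$; moreover, when $r\ge 2$, $(1+\lambda t^2)^{r/2-1}\ge \lambda^{r/2-1}t^{r-2}$, so that in this regime $\eta_r(\xi)|\xi|^2\ge (\eta_0-\eta_\infty)\lambda^{r/2-1}|\xi|^r+\eta_\infty|\xi|^2$.

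\textbf{$L^2$ estimates, i.e.\ case (i) and the first line of (\ref{estimates_u_tilde}) in case (ii).} Using $\eta_r\ge\eta_\infty$ in the energy identity and bounding the source term by Hölder's inequality together with $f'\in L^\infty(\omega)^2$ and $|\widetilde\Omega_\epsilon|\le|\Omega|$, I obtain
\begin{equation*}
\epsilon^\gamma\eta_\infty\|\mathbb{D}_\epsilon[\tilde u_\epsilon]\|_{L^2(\widetilde\Omega_\epsilon)^{3\times3}}^2\le C\|\tilde u_\epsilon\|_{L^2(\widetilde\Omega_\epsilon)^3}\le C\epsilon\|\mathbb{D}_\epsilon[\tilde u_\epsilon]\|_{L^2(\widetilde\Omega_\epsilon)^{3\times3}},
\end{equation*}
where in the last step I invoke (\ref{Poincare}) and (\ref{Korn}) in $L^2$. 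Dividing yields $\|\mathbb{D}_\epsilon[\tilde u_\epsilon]\|_{L^2}\le C\epsilon^{1-\gamma}$; the bounds on $\|D_\epsilon\tilde u_\epsilon\|_{L^2}$ and $\|\tilde u_\epsilon\|_{L^2}$ in (\ref{estimates_u_tilde}) follow by a further application of (\ref{Korn}) and (\ref{Poincare}).

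\textbf{$L^r$ estimates for dilatant fluids ($r>2$).} Keeping only the $L^r$ lower bound in the energy identity gives the key inequality
\begin{equation*}
\epsilon^\gamma(\eta_0-\eta_\infty)\lambda^{r/2-1}\,\|\mathbb{D}_\epsilon[\tilde u_\epsilon]\|_{L^r(\widetilde\Omega_\epsilon)^{3\times3}}^r\le\int_{\widetilde\Omega_\epsilon}f'\cdot\tilde u_\epsilon'\,dx'dz_3,
\end{equation*}
which I close in three different ways. When $\gamma\ge 1$, I bound the right-hand side by Hölder and the $L^r$ versions of (\ref{Poincare})--(\ref{Korn}), $|\!\int f'\cdot\tilde u_\epsilon'|\le C\|\tilde u_\epsilon\|_{L^r}\le C\epsilon\|\mathbb{D}_\epsilon[\tilde u_\epsilon]\|_{L^r}$, so that $\|\mathbb{D}_\epsilon[\tilde u_\epsilon]\|_{L^r}^{r-1}\le C\epsilon^{1-\gamma}$; this immediately produces (\ref{estimates_u_tilde2equal1}) for $\gamma=1$ and (\ref{estimates_u_tilde2greater1}) for $\gamma>1$, whence the remaining bounds again by (\ref{Korn})--(\ref{Poincare}). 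The sub-case $\gamma<1$ is the real obstacle: the naive argument above would give only $\|\mathbb{D}_\epsilon[\tilde u_\epsilon]\|_{L^r}\le C\epsilon^{(1-\gamma)/(r-1)}$, which is strictly worse than the claimed exponent $2(1-\gamma)/r$ since $\frac{1}{r-1}<\frac{2}{r}$ for $r>2$. The trick is to \emph{bootstrap} by using instead the already-established $L^2$ control: since $f'\in L^\infty$ and $|\widetilde\Omega_\epsilon|\le C$,
\begin{equation*}
\Bigl|\int_{\widetilde\Omega_\epsilon}f'\cdot\tilde u_\epsilon'\Bigr|\le C\|\tilde u_\epsilon\|_{L^2(\widetilde\Omega_\epsilon)^3}\le C\epsilon\|\mathbb{D}_\epsilon[\tilde u_\epsilon]\|_{L^2}\le C\epsilon^{2-\gamma},
\end{equation*}
and insertion in the key inequality gives $\|\mathbb{D}_\epsilon[\tilde u_\epsilon]\|_{L^r}^r\le C\epsilon^{2(1-\gamma)}$, which is (\ref{estimates_u_tilde2less1}) after a final use of (\ref{Korn}) and (\ref{Poincare}). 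Recognising that the sharp exponent in this sub-case comes from feeding back the $L^2$ bound, rather than closing directly at the $L^r$ level, is the main conceptual step of the proof.
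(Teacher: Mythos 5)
Your proof is correct and follows essentially the same route as the paper: the energy identity, the lower bound $(1+\lambda t^2)^{r/2-1}\ge\lambda^{r/2-1}t^{r-2}$ for $r>2$, and the two ways of closing the resulting $L^r$ inequality (feeding back the $L^2$ bound for $\gamma<1$, closing at the $L^r$ level for $\gamma\ge 1$). The only cosmetic difference is that the paper derives both candidate bounds for all $\gamma$ and then compares exponents, whereas you select the sharper closure in each sub-case from the outset.
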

\begin{proof}
 Multiplying (\ref{2}) by $\tilde u_\epsilon$, integrating over $\widetilde\Omega_\epsilon$ and taking into account  that ${\rm div}_\epsilon(\tilde u_\epsilon)=0$ in $\widetilde\Omega_\epsilon$,  we get
\begin{equation}\label{form_var_estim}\epsilon^\gamma (\eta_0-\eta_\infty)\int_{\widetilde\Omega_\epsilon}\left(1+\lambda|\mathbb{D}_{\epsilon}[\tilde u_\epsilon]|^2\right)^{{r\over 2}-1}|\mathbb{D}_\epsilon[\tilde u_\epsilon]|^2dx'dz_3+\epsilon^\gamma \eta_\infty\int_{\widetilde\Omega_\epsilon}|\mathbb{D}_{\epsilon}[\tilde u_\epsilon]|^2dx'dz_3= \int_{\widetilde\Omega_\epsilon}f'\cdot \tilde u_\epsilon'\,dx'dz_3.
\end{equation}
We divide the proof in two steps. First, we derive estimates (\ref{estimates_u_tilde}) for every $r>1$ and then, for $r>2$, we establish estimates (\ref{estimates_u_tilde2less1})-(\ref{estimates_u_tilde2equal1}) depending on the value of $\gamma$.\\

{\it Step 1}. We consider $r>1$. Taking into account that $\eta_0>\eta_{\infty}$, and $\lambda>0$, we have
$$\epsilon^\gamma (\eta_0-\eta_\infty)\int_{\widetilde\Omega_\epsilon}\left(1+\lambda|\mathbb{D}_{\epsilon}[\tilde u_\epsilon]|^2\right)^{{r\over 2}-1}|\mathbb{D}_\epsilon[\tilde u_\epsilon]|^2dx'dz_3\geq 0.
$$
From  Cauchy-Schwarz inequality and the assumption on $f'$ given in (\ref{fassump}), we deduce from (\ref{form_var_estim}) that
$$\epsilon^\gamma\eta_\infty\| \mathbb{D}_\epsilon[\tilde u_\epsilon]\|_{L^2(\widetilde\Omega_\epsilon)^{3\times 3}}^2\leq C \|\tilde u_\epsilon\|_{L^2(\widetilde\Omega_\epsilon)^3}.$$
Applying Poincar\'e  and Korn inequalities \eqref{Poincare}-\eqref{Korn} to the right-hand side, we get (\ref{estimates_u_tilde})$_3$. Finally, applying once again \eqref{Poincare} and \eqref{Korn} yields (\ref{estimates_u_tilde})$_1$ and (\ref{estimates_u_tilde})$_2$.\\

{\it Step 2}. Assume that $r>2$. The idea is now to estimate the first integral in~\eqref{form_var_estim}, starting off by noticing that 
$$\epsilon^\gamma \eta_\infty\int_{\widetilde\Omega_\epsilon}|\mathbb{D}_{\epsilon}[\tilde u_\epsilon]|^2dx'dz_3\geq 0.$$
 Hence,\eqref{form_var_estim} and Cauchy-Schwarz inequality imply
\begin{equation}\label{form_var_estim2}\epsilon^\gamma (\eta_0-\eta_\infty)\int_{\widetilde\Omega_\epsilon}\left(1+\lambda|\mathbb{D}_{\epsilon}[\tilde u_\epsilon]|^2\right)^{{r\over 2}-1}|\mathbb{D}_\epsilon[\tilde u_\epsilon]|^2dx'dz_3\leq  C\|\tilde u_\epsilon\|_{L^2(\widetilde\Omega_\epsilon)^3}.
\end{equation}
Noticing that
$$\epsilon^\gamma \lambda^{r-2\over 2} (\eta_0-\eta_\infty)\int_{\widetilde\Omega_\epsilon}|\mathbb{D}_\epsilon[\tilde u_\epsilon]|^rdx'dz_3\leq \epsilon^\gamma (\eta_0-\eta_\infty)\int_{\widetilde\Omega_\epsilon}\left(1+\lambda|\mathbb{D}_{\epsilon}[\tilde u_\epsilon]|^2\right)^{{r\over 2}-1}|\mathbb{D}_\epsilon[\tilde u_\epsilon]|^2dx'dz_3,$$
and applying Poincar\'e and Korn inequalities (\ref{Poincare})-(\ref{Korn}) to the right-hand side of (\ref{form_var_estim2}), we get
\begin{equation}\label{estim_du}\|\mathbb{D}_\epsilon[\tilde u_\epsilon]\|^r_{L^r(\widetilde\Omega_\epsilon)^{3\times 3}}\leq C\epsilon^{1-\gamma}\|\mathbb{D}_\epsilon[\tilde u_\epsilon]\|_{L^2(\widetilde\Omega_\epsilon)^{3\times 3}}.
\end{equation}
On the one hand, applying estimate (\ref{estimates_u_tilde})$_3$, we deduce
$$\|\mathbb{D}_\epsilon[\tilde u_\epsilon]\|_{L^r(\widetilde\Omega_\epsilon)^{3\times 3}}\leq C\epsilon^{-{2\over r}(\gamma-1)}.$$
On the other hand, from the continuity of the embedding $L^r(\widetilde\Omega_\epsilon)\hookrightarrow L^{2}(\widetilde\Omega_\epsilon)$ in (\ref{estim_du}), we also have
$$\|\mathbb{D}_\epsilon[\tilde u_\epsilon]\|^r_{L^r(\widetilde\Omega_\epsilon)^{3\times 3}}\leq C\epsilon^{1-\gamma}\|\mathbb{D}_\epsilon[\tilde u_\epsilon]\|_{L^r(\widetilde\Omega_\epsilon)^{3\times 3}},$$
which gives
$$\|\mathbb{D}_\epsilon[\tilde u_\epsilon]\|_{L^r(\widetilde\Omega_\epsilon)^{3\times 3}}\leq C\epsilon^{-{\gamma-1\over r-1}}.$$
As a result, we have derived two different estimates of $\mathbb{D}_\epsilon[\tilde u_\epsilon]$ in $L^r(\widetilde\Omega_\epsilon)^{3\times 3}$, that we may now compare in order to obtain the more accurate one, depending on the value of $\gamma$. Since 
$-{2\over r}(\gamma-1)>-{\gamma-1\over r-1}$ if $\gamma<1$ and $-{2\over r}(\gamma-1)<-{\gamma-1\over r-1}$ if $\gamma>1$, we deduce estimates (\ref{estimates_u_tilde2less1})$_3$ and (\ref{estimates_u_tilde2greater1})$_3$. In case $\gamma=1$, both estimates give (\ref{estimates_u_tilde2equal1})$_3$. Finally, from Poincar\'e inequality (\ref{Poincare}) and Korn inequality (\ref{Korn}), we derive the remaining estimates (\ref{estimates_u_tilde2less1}), (\ref{estimates_u_tilde2greater1}) and (\ref{estimates_u_tilde2equal1}).
\end{proof}

\begin{remark}
We extend the velocity $\tilde u_\epsilon$ by zero in $\Omega\setminus\widetilde \Omega_\epsilon$ (this is compatible with the homogeneous boundary condition on $\partial \Omega\cup \partial T_\epsilon$), and   denote the extension by the same symbol. Obviously, estimates given in Lemma \ref{Estimates_lemma} remain valid and the extension $\tilde u_\epsilon$ is divergence free too.
\end{remark}

Recall that $Q_\epsilon=\omega\times (0,\epsilon)$. To extend the pressure $\tilde p_\epsilon$ to the whole domain $\Omega$ and obtain {\it a priori} estimates, we rely on a duality argument and on the existence of restriction operators from $W^{1,q}_0(Q_\epsilon)^3$ into $W^{1,q}_0(\Omega_\epsilon)^3$, introduced in~\cite{Anguiano_SuarezGrau}.

\begin{lemma}[Lemma 4.5-(i) in \cite{Anguiano_SuarezGrau}] \label{restriction_operator}
Let $1<q<+\infty$. There exists  a (restriction) operator $R^\epsilon_q$ mapping $W^{1,q}_0(Q_\epsilon)^3$  to $W^{1,q}_0(\Omega_\epsilon)^3$, $1<q<+\infty$, such that
\begin{enumerate}
\item $R^\epsilon_q \varphi=\varphi$, if $\varphi \in W^{1,q}_0(\Omega_\epsilon)^3$ (elements of $W^{1,q}_0(\Omega_\epsilon)$ are extended by $0$ to $Q_\epsilon$).
\item ${\rm div}R^\epsilon_q \varphi=0\hbox{  in }\Omega_\epsilon$, if ${\rm div}\,\varphi=0\hbox{  in }Q_\epsilon$.
\item  There exists a positive constant $C$, independent of $\epsilon$, such that for every $\varphi\in W^{1,q}_0(Q_\epsilon)^3$,
\begin{equation}\label{estim_restricted}
\begin{array}{l}
\|R^\epsilon_q \varphi\|_{L^q(\Omega_\epsilon)^{3}}+ \epsilon\|D R^\epsilon_q \varphi\|_{L^q(\Omega_\epsilon)^{3\times 3}} \leq C\left(\|\varphi\|_{L^q(Q_\epsilon)^3}+\epsilon \|D \varphi\|_{L^q(Q_\epsilon)^{3\times 3}}\right)\,.
\end{array}
\end{equation}
\end{enumerate}
\end{lemma}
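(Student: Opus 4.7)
The plan is to adapt the classical Tartar--Allaire restriction operator construction for periodic porous media to the present thin, cylindrical-obstacle geometry. First, I would build a local restriction operator on the unit reference cell $Z = Z' \times (0,1)$: given $\varphi \in W^{1,q}(Z)^3$, I seek $R\varphi \in W^{1,q}(Z_f)^3$ satisfying $R\varphi = \varphi$ on $\partial Z$ (to ensure that the cellwise construction will glue continuously), $R\varphi = 0$ on $\partial T$, and $\mathrm{div}\,R\varphi = 0$ in $Z_f$ whenever $\mathrm{div}\,\varphi = 0$ in $Z$. Writing $R\varphi = \varphi - \psi$, this reduces to solving an auxiliary Stokes-type problem for $\psi$ in $Z_f$ with $\psi|_{\partial T} = \varphi|_{\partial T}$, $\psi|_{\partial Z} = 0$, and prescribed divergence chosen so that the flux of $\varphi$ across $\partial T$ is absorbed. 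Classical Bogovskii-type regularity on the Lipschitz domain $Z_f$ provides a bounded right-inverse of the divergence and yields the $W^{1,q}$ estimate $\|\psi\|_{W^{1,q}(Z_f)} \le C\|\varphi\|_{W^{1,q}(Z)}$.

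Second, I would rescale and glue. Under the affine change of variables sending $Z$ to the physical cell $Z_{k',\epsilon}$, a first-order derivative acquires a factor $\epsilon^{-1}$, so the local estimate becomes
\begin{equation*}
\|R^\epsilon_{k'}\varphi\|_{L^q(Z_{f_{k'},\epsilon})} + \epsilon\, \|D R^\epsilon_{k'}\varphi\|_{L^q(Z_{f_{k'},\epsilon})} \le C\bigl(\|\varphi\|_{L^q(Z_{k',\epsilon})} + \epsilon\, \|D\varphi\|_{L^q(Z_{k',\epsilon})}\bigr),
\end{equation*}
with $C$ independent of $k'$ and $\epsilon$ by scale invariance. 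Summing over $k' \in \mathcal{K}_\epsilon$ produces estimate (\ref{estim_restricted}). The matching condition $R\varphi = \varphi$ on $\partial Z$ guarantees that the cellwise-patched global operator lies in $W^{1,q}_0(\Omega_\epsilon)^3$. Property (1) is then immediate, because if $\varphi$ already vanishes near $T_\epsilon$ then $\psi \equiv 0$ solves the auxiliary problem; property (2) follows from the local identity $\mathrm{div}\,R\varphi = \mathrm{div}\,\varphi$ in $Z_f$.

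The hard part will be the sharp divergence preservation in the first step. In general $\int_T \mathrm{div}\,\varphi \ne 0$, so $\psi$ cannot be chosen divergence-free; instead one must solve $\mathrm{div}\,\psi = g$ in $Z_f$, with $g$ accounting for the mass loss, while simultaneously imposing the mixed non-homogeneous Dirichlet data on $\partial T$ and $\partial Z$. The standard strategy is to first extend the boundary trace $\varphi|_{\partial T}$ into $Z_f$ in a controlled way, then correct the remaining divergence via a Bogovskii operator supported in $Z_f$. For the thin-domain scaling to produce the anisotropic estimate (\ref{estim_restricted}), one must further verify that this construction respects the anisotropy in $(x',x_3)$, so that the factor of $\epsilon$ multiplies only the gradient norm. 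The cylindrical geometry of the obstacles (extending through the full thickness) substantially simplifies this: the construction can essentially be carried out in two dimensions on the horizontal cross-section, and $\varphi_3$ can be adjusted separately using the pointwise relation $\partial_{z_3}\varphi_3 = -\mathrm{div}_{x'}\varphi'$ coming from the divergence constraint.
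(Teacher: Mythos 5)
The paper does not prove this lemma at all: it is imported verbatim from \cite{Anguiano_SuarezGrau} (Lemma 4.5-(i) there), whose proof is exactly the cell-wise Tartar--Allaire construction you describe (local restriction on the reference cell via an auxiliary Stokes/Bogovskii problem, matching traces on $\partial Z$, rescaling and gluing). So your route is essentially the same as the source's, and it is sound.

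Two small clarifications. First, your concern about ``anisotropy'' in estimate~\eqref{estim_restricted} is unfounded: in this geometry the physical cells $Z'_{k',\epsilon}\times(0,\epsilon)$ are \emph{isotropic} $\epsilon$-dilations of $Z=Z'\times(0,1)$ (the lemma lives on $Q_\epsilon$ and $\Omega_\epsilon$, before the dilation $z_3=x_3/\epsilon$), so the weight $\epsilon$ on the gradient term is exactly what the uniform unit-cell bound $\|R\hat\varphi\|_{W^{1,q}(Z_f)}\le C\|\hat\varphi\|_{W^{1,q}(Z)}$ produces under the change of variables; no separate treatment of $x'$ and $x_3$ is needed, and the proposed two-dimensional reduction using $\partial_{z_3}\varphi_3=-\mathrm{div}_{x'}\varphi'$ is both unnecessary and invalid for general (non-divergence-free) $\varphi$, for which the operator must also be defined. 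Second, for $q\neq 2$ you should commit to the trace-extension-plus-Bogovskii variant rather than $L^q$ regularity for the Stokes system, since $Z_f$ has edges; with a linear extension operator, property (1) still follows because a zero trace on $\partial T$ and a zero divergence defect produce a zero corrector. The only remaining routine point you leave implicit is the treatment of cells cut by $\partial\omega$, which is handled by the standing assumption that the obstacles do not meet $\partial\omega$.
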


 In the next result, using the restriction operator defined in Lemma~\ref{restriction_operator},  we extend the pressure gradient $\nabla p_\epsilon$ by duality in $W^{-1,q'}(Q_\epsilon)^3$. Then, by means of the dilatation, we extend $\tilde p_\epsilon$ to $\Omega$ and derive the corresponding estimates.

\begin{lemma}\label{Estimates_extended_lemma}  Let $\tilde p_\epsilon$ the pressure solution of (\ref{2}).
\begin{itemize}
\item[$(i)$] (Pseudoplastic and Newtonian fluid.) If $1<r\leq 2$, there exist an extension  $\tilde P_\epsilon \in L^2_0(\Omega)$ of $\tilde p_\epsilon$ and a positive constant $C$, independent of $\epsilon$, such that 
\begin{equation}\label{esti_P}
\|\tilde P_\epsilon\|_{L^2(\Omega)}\leq C\,, \quad \|\nabla_\epsilon \tilde P_\epsilon\|_{H^{-1}(\Omega)^3}\leq C.
\end{equation}
\item[$(ii)$] (Dilatant fluid.) If $r>2$, there exist an extension $\tilde P_\epsilon \in L^{r'}_0(\Omega)$ of $\tilde p_\epsilon$ and a positive constant $C$, independent of $\epsilon$, such that 
\begin{equation}\label{esti_P_12}
\|\tilde P_\epsilon\|_{L^{r'}(\Omega)}\leq C\,, \quad \|\nabla_\epsilon \tilde P_\epsilon\|_{W^{-1,r'}(\Omega)^3}\leq C,
\end{equation}
where $r'$ is the conjugate exponent of $r$.
\end{itemize}
\end{lemma}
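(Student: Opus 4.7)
The plan is to extend $\tilde p_\epsilon$ to the whole dilated domain $\Omega$ by the classical Tartar-style duality argument, adapted to the thin porous geometry via the restriction operator $R^\epsilon_q$ of Lemma~\ref{restriction_operator}. Throughout, I will take $q=2$ in the pseudoplastic/Newtonian case $1<r\leq 2$ and $q=r$ in the dilatant case $r>2$, since these are precisely the integrability indices in which Lemma~\ref{Estimates_lemma} supplies usable bounds for $\mathbb{D}_\epsilon[\tilde u_\epsilon]$ and hence for the viscous stress.

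As a first step, for any $\varphi \in W^{1,q}_0(Q_\epsilon)^3$ I will define
\[
\langle F_\epsilon, \varphi\rangle := \int_{\Omega_\epsilon} f \cdot R^\epsilon_q \varphi\,dx - \epsilon^\gamma \int_{\Omega_\epsilon} \eta_r(\mathbb{D}[u_\epsilon])\mathbb{D}[u_\epsilon]:\mathbb{D}[R^\epsilon_q \varphi]\,dx,
\]
which coincides with $\langle \nabla p_\epsilon, R^\epsilon_q \varphi\rangle_{\Omega_\epsilon}$ by the momentum equation in (\ref{1}). Property (2) of $R^\epsilon_q$ guarantees that $F_\epsilon$ vanishes on divergence-free test fields in $Q_\epsilon$, hence De~Rham's theorem yields $F_\epsilon = \nabla P_\epsilon$ for some $P_\epsilon \in L^{q'}_0(Q_\epsilon)$, and property~(1) identifies $P_\epsilon$ with $p_\epsilon$ on $\Omega_\epsilon$. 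The dilation (\ref{dilatacion}) then delivers the extension $\tilde P_\epsilon(x',z_3):=P_\epsilon(x',\epsilon z_3)\in L^{q'}_0(\Omega)$ of $\tilde p_\epsilon$ from $\widetilde\Omega_\epsilon$ to $\Omega$.

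The heart of the proof is the estimation of $\langle F_\epsilon, \varphi\rangle$ on rescaled test fields $\varphi(x',x_3) = \tilde\varphi(x', x_3/\epsilon)$ with $\tilde\varphi \in W^{1,q}_0(\Omega)^3$. The change of variable $x_3=\epsilon z_3$ gives $\|\varphi\|_{L^q(Q_\epsilon)}=\epsilon^{1/q}\|\tilde\varphi\|_{L^q(\Omega)}$, $\|D\varphi\|_{L^q(Q_\epsilon)}=\epsilon^{1/q}\|D_\epsilon\tilde\varphi\|_{L^q(\Omega)}$, and identifies $\mathbb{D}[u_\epsilon]$ in $\Omega_\epsilon$ with $\mathbb{D}_\epsilon[\tilde u_\epsilon]$ in $\widetilde\Omega_\epsilon$. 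Combining the restriction bound (\ref{estim_restricted}), H\"older's inequality, the assumption $f \in L^\infty(\omega)^2$, the pointwise Carreau bound $|\eta_r(\xi)\xi| \leq C(|\xi| + |\xi|^{r-1})$, together with the sharp a priori estimates of Lemma~\ref{Estimates_lemma}, the $\epsilon$-exponents cancel exactly in each regime and produce a uniform-in-$\epsilon$ bound of $\langle F_\epsilon, \varphi\rangle$ in terms of $\|\tilde\varphi\|_{L^q(\Omega)}$ and $\|D_\epsilon\tilde\varphi\|_{L^q(\Omega)}$. Dilating the De~Rham equation yields $\langle F_\epsilon, \varphi\rangle = -\epsilon\int_\Omega \tilde P_\epsilon\,{\rm div}_\epsilon\tilde\varphi\,dx'dz_3$, and dividing by $\epsilon$ then directly delivers the negative Sobolev-norm bound for $\nabla_\epsilon \tilde P_\epsilon$. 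The companion $L^{q'}(\Omega)$-bound on $\tilde P_\epsilon$ follows by testing the same duality identity against a test field $\tilde\varphi_g\in W^{1,q}_0(\Omega)^3$ satisfying ${\rm div}_\epsilon\tilde\varphi_g = g$ with $\|\tilde\varphi_g\|_{L^q}+\|D_\epsilon\tilde\varphi_g\|_{L^q}\leq C\|g\|_{L^q}$ for arbitrary $g\in L^q_0(\Omega)$; such an $\epsilon$-uniform right inverse of ${\rm div}_\epsilon$ is a standard Bogovskii-type construction in the thin-domain literature, obtained by combining a 2D Bogovskii problem in $\omega$ for the $z_3$-average of $g$ with a suitably localized $z_3$-antiderivative for the remainder.

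The main obstacle is the delicate bookkeeping of $\epsilon$-powers in the viscous contribution to $F_\epsilon$, especially in the dilatant case $r>2$. There, the Carreau stress splits into a linear piece controlled by $\|\mathbb{D}_\epsilon[\tilde u_\epsilon]\|_{L^2(\widetilde\Omega_\epsilon)}$ and a power-law piece controlled by $\|\mathbb{D}_\epsilon[\tilde u_\epsilon]\|_{L^r(\widetilde\Omega_\epsilon)}^{r-1}$, and the sharper of the two available bounds depends on the sign of $\gamma-1$. This is precisely the reason why Lemma~\ref{Estimates_lemma}$(ii)$ is split into the three sub-cases $\gamma<1$, $\gamma=1$, $\gamma>1$, whose optimality is essential to keep the constant in the bound on $\langle F_\epsilon, \varphi\rangle$, and therefore on $\tilde P_\epsilon$, independent of $\epsilon$.
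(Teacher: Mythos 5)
Your proposal follows the same strategy as the paper for the core of the argument: the functional $F_\epsilon$ built from the restriction operator of Lemma~\ref{restriction_operator}, its vanishing on divergence-free fields, De~Rham's theorem, the dilation back to $\Omega$, and the $\epsilon$-bookkeeping that combines the pointwise bound on the Carreau stress with the sharp estimates of Lemma~\ref{Estimates_lemma} (including the split into $\gamma<1$, $\gamma=1$, $\gamma>1$ when $r>2$) are exactly the paper's Steps 1--3, and your treatment of the three terms of $\langle F_\epsilon,\varphi\rangle$ is correct. The one place where you genuinely diverge is the passage from the bound on $\|\nabla_\epsilon\tilde P_\epsilon\|_{W^{-1,q'}(\Omega)^3}$ to the bound on $\|\tilde P_\epsilon\|_{L^{q'}(\Omega)}$. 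The paper simply observes that $\|\nabla\tilde P_\epsilon\|_{W^{-1,q'}(\Omega)^3}\le \|\nabla_\epsilon\tilde P_\epsilon\|_{W^{-1,q'}(\Omega)^3}$ (test the unscaled gradient against $(\tilde\varphi',\epsilon\tilde\varphi_3)$, whose $W^{1,q}_0$-norm does not exceed that of $\tilde\varphi$) and then invokes the Ne\v{c}as inequality in the \emph{fixed} domain $\Omega$; no inversion of the scaled divergence is required. Your route instead asks for an $\epsilon$-uniform right inverse of ${\rm div}_\epsilon$ on $L^q_0(\Omega)$, which is a strictly stronger tool, and the construction you sketch is incomplete as stated: the vertical piece $\varphi_3(x',z_3)=\epsilon\int_0^{z_3}(g-\bar g)(x',s)\,ds$ has horizontal gradient involving $D_{x'}g$, so it does not land in $W^{1,q}_0(\Omega)^3$ with uniform bounds for arbitrary $g\in L^q_0(\Omega)$. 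This is repairable (such uniform Bogovskii-type operators do exist in the thin-domain literature), but the Ne\v{c}as route is both shorter and the one the paper takes; I recommend replacing your last step by it.
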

\begin{proof}
 We divide the proof in three steps. First, we extend the pressure in all cases (pseudoplastic, Newtonian and dilatant). Then, we obtain the estimates for pseudoplastic and Newtonian fluids, before deriving the estimate for dilatant fluids.\\

{\it Step 1.} {\it Extension of the pressure}.
   Let $q=\max\{2, r\}$ and $q'$ be the conjugate exponent of $q$. 
	 Using the restriction operator $R^\epsilon_q$ given in Lemma~\ref{restriction_operator}, we define the linear functional $F_\epsilon$ on $W^{1,q}_0(Q_\epsilon)^3$ by
	\begin{equation}\label{F}
	F_\epsilon(\varphi)=\langle \nabla p_\epsilon, R^\epsilon_q \varphi\rangle_{{W^{-1,q'}(\Omega_\epsilon)^3, W^{1,q}_0(\Omega_\epsilon)^3}}\,,\quad \hbox{for any }\varphi\in W^{1,q}_0(Q_\epsilon)^3\, .
	\end{equation}
Using the variational formulation of problem (\ref{1}), the right hand side of (\ref{F}) can be rephrased as follows:
\begin{equation}\label{equality_duality}
\begin{array}{rl}
\medskip
\displaystyle
F_{\epsilon}(\varphi)=&\displaystyle
-\epsilon^\gamma(\eta_0-\eta_\infty)\int_{\Omega_\epsilon} (1+\lambda|\mathbb{D}[u_\epsilon]|^2)^{{r\over 2}-1}\mathbb{D}[u_\epsilon]: DR^{\epsilon}_q\varphi\,dx \\
\medskip
&\displaystyle
- \epsilon^\gamma\eta_\infty  \int_{\Omega_\epsilon}   \mathbb{D}[u_\epsilon]: DR^{\epsilon}_q\varphi\,dx+ \int_{\Omega_\epsilon} f'\cdot (R^{\epsilon}_q\varphi)'\,dx \,.
\end{array}\end{equation}
Using Lemma \ref{Estimates_lemma} for fixed $\epsilon$, we see that $F_\epsilon\in W^{-1,q'}(Q_\epsilon)^3$. Moreover, ${\rm div}\, \varphi=0$ implies $F_{\epsilon}(\varphi)=0\,,$ hence De Rham theorem gives the existence of $P_\epsilon$ in $L^{q'}_0(Q_\epsilon)$ such that $F_\epsilon=\nabla P_\epsilon$.

Now, we define $\tilde P_\epsilon\in L^{q'}_0(\Omega)$ by $\tilde P_\epsilon(x',z_3)=P_\epsilon(x',\epsilon z_3)$, and take $\tilde \varphi \in W^{1,q}_0(\Omega)^3$ and the corresponding function $\varphi\in  W^{1,q}_0(Q_\epsilon)^3$ satisfying $\tilde \varphi(x', z_3)=\varphi(x',\epsilon z_3)$.
Using the change of variables (\ref{dilatacion}) and the identification (\ref{equality_duality}) of $F_\epsilon$, we see that
\begin{align}
\langle \nabla_{\epsilon}\tilde P_\epsilon, \tilde \varphi\rangle_{W^{-1,q'}(\Omega)^3, W^{1,q}_0(\Omega)^3}& = -\int_{\Omega}\tilde P_\epsilon\,{\rm div}_{\epsilon}\,\tilde \varphi\,dx'dz_3 \nonumber \\
& = -\epsilon^{-1}\int_{Q_\epsilon}P_\epsilon\,{\rm div}\,\varphi\,dx \nonumber\\
& = \epsilon^{-1}\langle \nabla P_\epsilon, \varphi\rangle_{W^{-1,q'}(Q_\epsilon)^3, W^{1,q}_0(Q_\epsilon)^3}\nonumber\\
& = \epsilon^{-1} F_\epsilon(\varphi)\nonumber\\
& = \epsilon^{-1}\left(
-\epsilon^\gamma(\eta_0-\eta_\infty) \int_{\Omega_\epsilon} (1+\lambda|\mathbb{D}[u_\epsilon]|^2)^{{r\over 2}-1}\mathbb{D}[u_\epsilon]: DR^{\epsilon}_q\varphi\,dx\right.\nonumber\\
&\left.
\quad\quad - \epsilon^\gamma\eta_\infty  \int_{\Omega_\epsilon}   \mathbb{D}[u_\epsilon]: DR^{\epsilon}_q\varphi\,dx+ \int_{\Omega_\epsilon} f'\cdot (R^{\epsilon}_q\varphi)'\,dx\right)\nonumber\\
& = - \epsilon^\gamma(\eta_0-\eta_\infty)\int_{\widetilde \Omega_\epsilon} (1+\lambda|\mathbb{D}_\epsilon[\tilde u_\epsilon]|^2)^{{r\over 2}-1}\mathbb{D}_\epsilon[\tilde u_\epsilon] : D_{\epsilon}\tilde R^{\epsilon}_q\tilde \varphi\,dx'dz_3\nonumber\\
&\quad\quad - \epsilon^\gamma\eta_\infty  \int_{\widetilde\Omega_\epsilon}   \mathbb{D}_\epsilon[\tilde u_\epsilon]: D_\epsilon\tilde R^{\epsilon}_q\tilde \varphi\,dx'dz_3+\int_{\widetilde \Omega_\epsilon} f'(x')\cdot (\tilde R^{\epsilon}_q \tilde \varphi)'\,dx'dz_3\,,\label{extension_1}
\end{align}
where $\tilde R^{\epsilon}_q$ is defined by
$
(\tilde R^{\epsilon}_q \tilde \varphi)(x',z_3) = (R^{\epsilon}_q \varphi)(x',\epsilon z_3)
$.\\

{\it Step 2}. {\it  Estimates of the extended pressure for pseudoplastic fluids and Newtonian fluids}.  Applying the dilatation in (\ref{estim_restricted}) for $q=2$, we have that $\tilde R^\epsilon_2\tilde \varphi$ satisfies the following estimate
\begin{equation}\label{ext_1}\begin{array}{l}
\medskip\displaystyle
\|\tilde R^\epsilon_2\tilde \varphi\|_{L^2(\widetilde\Omega_\epsilon)^3}+ \epsilon \|D_{\epsilon}\tilde R^\epsilon_2\tilde \varphi\|_{L^2(\widetilde\Omega_\epsilon)^{3\times 3}}\leq  C\left(\|\tilde \varphi\|_{L^2(\Omega)^3} 
+ \epsilon\|D_{\epsilon}\tilde \varphi\|_{L^2(\Omega)^{3\times 3}}\right),
\end{array} 
\end{equation}
and since $\epsilon\ll 1$, we deduce 
\begin{equation}\label{ext_2}
\|\tilde R^\epsilon_2 \tilde \varphi\|_{L^2(\widetilde\Omega_\epsilon)^3}\leq  C \|\tilde \varphi\|_{H^1_0(\Omega)^3},\quad \|D_\epsilon \tilde R^\epsilon_2\tilde \varphi\|_{L^2(\widetilde\Omega_\epsilon)^{3\times 3}}\leq {C\over \epsilon}\|\tilde \varphi\|_{H^1_0(\Omega)^3}.
\end{equation}
Taking into account that $1<r\leq 2$, we notice that 
$$(1+\lambda|\mathbb{D}_\epsilon[\tilde u_\epsilon]|^2)^{{r\over 2}-1}\leq 1,$$ 
so that Cauchy-Schwarz inequality yields
\begin{align*}
\left|\int_{\widetilde \Omega_\epsilon}(1+\lambda|\mathbb{D}_\epsilon[\tilde u_\epsilon]|^2)^{{r\over 2}-1}\mathbb{D}_\epsilon[\tilde u_\epsilon] : D_{\epsilon}\tilde R^{\epsilon}_2 \tilde \varphi\,dx'dz_3\right| & \leq
\int_{\widetilde \Omega_\epsilon}|\mathbb{D}_\epsilon[\tilde u_\epsilon]||D_{\epsilon}\tilde R^{\epsilon}_2 \tilde \varphi|\,dx'dz_3\\
& \leq 
 \|\mathbb{D}_\epsilon[\tilde u_\epsilon]\|_{L^2(\widetilde\Omega_\epsilon)^{3\times 3}}\|D_{\epsilon}\tilde R^{\epsilon}_2 \tilde \varphi\|_{L^2(\widetilde\Omega_\epsilon)^{3\times 3}}.
\end{align*}
Using last estimate in (\ref{estimates_u_tilde}) and last estimate on the dilated restricted operator given in (\ref{ext_2}),  we  obtain
\begin{equation}\label{estimate_Carreau1}
\begin{array}{rl}
\medskip
\displaystyle
\left|\epsilon^{\gamma}(\eta_0-\eta_\infty)\int_{\widetilde \Omega_\epsilon}(1+\lambda|\mathbb{D}_\epsilon[\tilde u_\epsilon]|^2)^{{r\over 2}-1}\mathbb{D}_\epsilon[\tilde u_\epsilon] : D_{\epsilon}\tilde R^{\epsilon}_2 \tilde \varphi\,dx'dz_3\right|& \displaystyle \leq C \|\tilde \varphi\|_{H^1_0(\Omega)^3}.
\end{array}
\end{equation}
and
\begin{equation}\label{estimate_Carreau2}
\begin{array}{rl}
\medskip
\displaystyle
\left|\epsilon^\gamma\eta_\infty  \int_{\widetilde\Omega_\epsilon}   \mathbb{D}_\epsilon[\tilde u_\epsilon]: D_\epsilon\tilde R^{\epsilon}_2\tilde \varphi\,dx'dz_3\right| &\displaystyle \leq C\epsilon^\gamma \|\mathbb{D}_{\epsilon}[\tilde u_\epsilon]\|_{L^2(\widetilde\Omega_\epsilon)^{3\times 3}}\|D_{\epsilon}\tilde R^\epsilon_2\tilde \varphi\|_{L^2(\widetilde \Omega_\epsilon)^{3\times 3}}\leq C \|\tilde \varphi\|_{H^1_0(\Omega)^3}.
\end{array}
\end{equation}
Since $f'=f'(x')$ is in $L^{\infty}(\omega)$, we also get by the first estimate in~(\ref{ext_2}) that
\begin{equation}\label{estimate_Carreau3}
\begin{array}{rl}
\medskip
\displaystyle
 \left|\int_{\widetilde\Omega_\epsilon}f'\cdot (\tilde R^\epsilon_2 \tilde \varphi)' \,dx'dz_3\right|&\leq C\|\tilde R^\epsilon_2 \tilde v \|_{L^2(\widetilde\Omega_\epsilon)^3}\leq C\|\tilde \varphi\|_{H^1_0(\Omega)^3}\,.
\end{array}
\end{equation}
Coming back to the expression~(\ref{extension_1}) of $\langle \nabla_{\epsilon}\tilde P_\epsilon, \tilde \varphi\rangle$, we deduce from (\ref{estimate_Carreau1})--(\ref{estimate_Carreau3})
the second estimate in (\ref{esti_P}). Finally, by Ne${\check{\rm c}}$as inequality, there exists a representative $\tilde P_\epsilon\in L^2_0(\Omega)$ such that
$$\|\tilde P_\epsilon\|_{L^2(\Omega)}\leq C\|\nabla\tilde P_\epsilon\|_{H^{-1}(\Omega)^3}\leq C\|\nabla_{\epsilon}\tilde P_\epsilon\|_{H^{-1}(\Omega)^3},$$
which implies the first estimate in (\ref{esti_P}).

{\it Step 3}. {\it  Estimates of  the extended pressure for dilatant fluids}. Applying the dilatation in (\ref{estim_restricted}) for $q=r$, we get that $\tilde R^\epsilon_r\tilde \varphi$ satisfies the following estimate:
\begin{equation}\label{ext_12}\begin{array}{l}
\medskip\displaystyle
\|\tilde R^\epsilon_r\tilde \varphi\|_{L^r(\widetilde\Omega_\epsilon)^3}+ \epsilon \|D_{\epsilon}\tilde R^\epsilon_r\tilde \varphi\|_{L^r(\widetilde\Omega_\epsilon)^{3\times 3}}\leq  C\left(\|\tilde \varphi\|_{L^r(\Omega)^3} 
+ \epsilon\|D_{\epsilon}\tilde \varphi\|_{L^r(\Omega)^{3\times 3}}\right),
\end{array} 
\end{equation}
and since $\epsilon\ll 1$, this yields 
\begin{equation}\label{ext_22}
\|\tilde R^\epsilon_r \tilde \varphi\|_{L^r(\widetilde\Omega_\epsilon)^3}\leq  C \|\tilde \varphi\|_{W^{1,r}_0(\Omega)^3},\quad \|D_\epsilon \tilde R^\epsilon_r\tilde \varphi\|_{L^r(\widetilde\Omega_\epsilon)^{3\times 3}}\leq {C\over \epsilon}\|\tilde \varphi\|_{W^{1,r}_0(\Omega)^3}.
\end{equation}
Since $r>2$, the  embedding $L^r(\widetilde\Omega_\epsilon) \hookrightarrow L^2(\widetilde\Omega_\epsilon)$ is continuous, so we can deduce from H${\rm \ddot{o}}$lder inequality and from the inequality $(1+X)^\alpha\leq C(1+X^\alpha)$ that holds true for $X\geq 0, \alpha>0$:
\[
\begin{array}{rl}
\medskip
&\displaystyle
\int_{\widetilde \Omega_\epsilon}\left|(1+\lambda|\mathbb{D}_\epsilon[\tilde u_\epsilon]|^2)^{{r\over 2}-1}\mathbb{D}_\epsilon[\tilde u_\epsilon] : D_{\epsilon}\tilde R^{\epsilon}_r \tilde \varphi\right|\,dx'dz_3 \\
\medskip
&\displaystyle \leq C \left( \int_{\widetilde \Omega_\epsilon}
|\mathbb{D}_\epsilon[\tilde u_\epsilon]|\, |D_{\epsilon}\tilde R^{\epsilon}_r \tilde \varphi|\,dx'dz_3 + \int_{\widetilde \Omega_\epsilon}|\mathbb{D}_\epsilon[\tilde u_\epsilon]|^{r-1}|D_{\epsilon}\tilde R^{\epsilon}_r \tilde \varphi|\,dx'dz_3 \right)\\
\medskip
& \displaystyle \leq C \left(\|\mathbb{D}_{\epsilon}[\tilde u_\epsilon]\|_{L^{2}(\widetilde\Omega_\epsilon)^{3\times 3}}\|D_{\epsilon}\tilde R^\epsilon_r\tilde \varphi\|_{L^2(\widetilde \Omega_\epsilon)^{3\times 3}}+\|\mathbb{D}_{\epsilon}[\tilde u_\epsilon]\|^{r-1}_{L^{r}(\widetilde\Omega_\epsilon)^{3\times 3}}\|D_{\epsilon}\tilde R^\epsilon_r\tilde \varphi\|_{L^r(\widetilde \Omega_\epsilon)^{3\times 3}} \right)
\\
\medskip
& \displaystyle \leq C \left(\|\mathbb{D}_{\epsilon}[\tilde u_\epsilon]\|_{L^{2}(\widetilde\Omega_\epsilon)^{3\times 3}}+\|\mathbb{D}_{\epsilon}[\tilde u_\epsilon]\|^{r-1}_{L^{r}(\widetilde\Omega_\epsilon)^{3\times 3}} \right)\|D_{\epsilon}\tilde R^\epsilon_r\tilde \varphi\|_{L^r(\widetilde \Omega_\epsilon)^{3\times 3}}.
\\
\end{array}
\]

Observe that if $\gamma<1$, taking into account that $-{2\over r}(\gamma-1)>-{\gamma-1\over (r-1)}$, using the last estimates in (\ref{estimates_u_tilde}) and (\ref{estimates_u_tilde2less1}), and the last estimate of the dilated restricted operator given in (\ref{ext_22}), we obtain 

$$\begin{array}{l}
\displaystyle
\left|\epsilon^\gamma(\eta_0-\eta_\infty)\int_{\widetilde \Omega_\epsilon}(1+\lambda|\mathbb{D}_\epsilon[\tilde u_\epsilon]|^2)^{{r\over 2}-1}\mathbb{D}_\epsilon[\tilde u_\epsilon] : D_{\epsilon}\tilde R^{\epsilon}_r \tilde \varphi\,dx'dz_3\right|\\
\\
\displaystyle
\leq C \epsilon^\gamma(\eta_0-\eta_\infty)\left (\epsilon^{1-\gamma}+\epsilon^{-{2\over r}(\gamma-1)(r-1)}\right)\epsilon^{-1}\|\tilde \varphi\|_{W^{1,r}_0(\widetilde\Omega_\epsilon)^3}\\
\\
\displaystyle
\leq C	\epsilon^\gamma(\eta_0-\eta_\infty)\left(\epsilon^{1-\gamma}+\epsilon^{-{\gamma-1\over r-1}(r-1)}\right) \epsilon^{-1} \|\tilde \varphi\|_{W^{1,r}_0(\widetilde\Omega_\epsilon)^3}\\
\\
	\displaystyle
	\leq C\|\tilde \varphi\|_{W^{1,r}_0(\widetilde\Omega_\epsilon)^3}.
\end{array}
$$
If $\gamma\geq 1$, by the last estimate in (\ref{estimates_u_tilde2greater1}) and (\ref{estimates_u_tilde2equal1}), $\|\mathbb{D}_{\epsilon}[\tilde u_\epsilon]\|_{L^{r}(\widetilde\Omega_\epsilon)^{3\times 3}}\|\leq C\epsilon^{-\frac{\gamma-1}{r-1}}$, so a similar argument proves that the estimate
\begin{equation}\label{estimate_Carreau12}
\begin{array}{rl}
\medskip
\displaystyle
\left|\epsilon^\gamma(\eta_0-\eta_\infty)\int_{\widetilde \Omega_\epsilon}(1+\lambda|\mathbb{D}_\epsilon[\tilde u_\epsilon]|^2)^{{r\over 2}-1}\mathbb{D}_\epsilon[\tilde u_\epsilon] : D_{\epsilon}\tilde R^{\epsilon}_r \tilde \varphi\,dx'dz_3\right|& \displaystyle \leq C \|\tilde \varphi\|_{W^{1,r}_0(\Omega)^3}
\end{array}
\end{equation}
remains valid for any $\gamma\in \R$.

Moreover, from Cauchy-Schwarz inequality, last estimate in (\ref{estimates_u_tilde}), the continuous embedding $L^r(\widetilde\Omega_\epsilon)\hookrightarrow L^2(\widetilde\Omega_\epsilon)$, the assumption on $f'$ given in (\ref{fassump}) and estimates (\ref{ext_22}),  we deduce the upper bounds
\begin{equation}\label{estimate_Carreau22}
\begin{array}{rl}
\medskip
\displaystyle
\left|\epsilon^\gamma \eta_\infty  \int_{\widetilde\Omega_\epsilon}   \mathbb{D}_\epsilon[\tilde u_\epsilon]: D_\epsilon\tilde R^{\epsilon}_r\tilde \varphi\,dx'dz_3\right| &\displaystyle \leq C\epsilon^\gamma\|\mathbb{D}_{\epsilon}[\tilde u_\epsilon]\|_{L^2(\widetilde\Omega_\epsilon)^{3\times 3}}\|D_{\epsilon}\tilde R^\epsilon_r\tilde \varphi\|_{L^r(\widetilde \Omega_\epsilon)^{3\times 3}}\leq C \|\tilde \varphi\|_{W^{1,r}_0(\Omega)^3},
\end{array}
\end{equation}
\begin{equation}\label{estimate_Carreau32}
\begin{array}{rl}
\medskip
\displaystyle
 \left|\int_{\widetilde\Omega_\epsilon}f'\cdot (\tilde R^\epsilon_r \tilde \varphi)' \,dx'dz_3\right|&\leq C\|\tilde R^\epsilon_r \tilde \varphi \|_{L^r(\widetilde\Omega_\epsilon)^3}\leq C\|\tilde \varphi\|_{W^{1,r}_0(\Omega)^3}\,.
\end{array}
\end{equation}
Taking into account the above estimates (\ref{estimate_Carreau12})--(\ref{estimate_Carreau32}), the relation (\ref{extension_1}) (with $q=r$) yields
$$\left|\langle \nabla_{\epsilon}\tilde P_\epsilon, \tilde \varphi\rangle_{W^{-1,r'}(\Omega)^3, W^{1,r}_0(\Omega)^3}\right|\leq C\|\tilde \varphi\|_{W^{1,r}_0(\Omega)^3}.$$
This implies the second estimate in (\ref{esti_P_12}) and, by Ne${\breve{\rm c}}$as inequality, the existence of a representative $\tilde P_\epsilon\in L^{r'}_0(\Omega)$ such that
$$\|\tilde P_\epsilon\|_{L^{r'}(\Omega)}\leq C\|\nabla\tilde P_\epsilon\|_{W^{-1,r'}(\Omega)^3}\leq C\|\nabla_{\epsilon}\tilde P_\epsilon\|_{W^{-1,r'}(\Omega)^3},$$
which provides the first estimate in (\ref{esti_P_12}).

\end{proof}

\subsection{Adaptation of the unfolding method}\label{sec:unfolding}
The change of variables (\ref{dilatacion}) does not provide the information we need about the behaviour of $\tilde u_\epsilon$ in the microstructure associated to $\widetilde\Omega_\epsilon$. To solve this difficulty, we use an adaptation introduced in \cite{Anguiano_SuarezGrau} of the unfolding method from \cite{CDG}.

Let us recall that this adaptation of the unfolding method divides the domain $\widetilde\Omega_\epsilon$ in cubes of lateral length $\epsilon$ and vertical length $1$. Thus, given $(\tilde{\varphi}_{\epsilon},  \tilde \psi_\epsilon) \in L^q(\Omega)^3\times L^{q'}(\Omega)$, $1<q<+\infty$ and $1/q+1/q'=1$, we define $(\hat{\varphi}_{\epsilon},  \hat \psi_\epsilon)\in L^q(\omega\times Z)^3\times L^{q'}(\omega\times Z)$ by
\begin{equation}\label{phihat}
\hat{\varphi}_{\epsilon}(x^{\prime},z)=\tilde{\varphi}_{\epsilon}\left( {\epsilon}\kappa\left(\frac{x^{\prime}}{{\epsilon}} \right)+{\epsilon}z^{\prime},z_3 \right),\quad 
\hat{\psi}_{\epsilon}(x^{\prime},z)=\tilde{\psi}_{\epsilon}\left( {\epsilon}\kappa\left(\frac{x^{\prime}}{{\epsilon}} \right)+{\epsilon}z^{\prime},z_3 \right),\quad \hbox{ a.e. }(x^{\prime},z)\in \omega\times Z,
\end{equation}
assuming $\tilde \varphi_\epsilon$ and $\tilde \psi_\epsilon$ are extended by zero outside $\omega$, where the function $\kappa:\mathbb{R}^2\to \mathbb{Z}^2$ is defined by 
$$\kappa(x')=k'\Longleftrightarrow x'\in Z'_{k',1},\quad\forall\,k'\in\mathbb{Z}^2.$$

\begin{remark}\label{remarkCV}We make the following comments:
\begin{itemize}
\item[-] The function $\kappa$ is well defined up to a set of zero measure in $\mathbb{R}^2$ (the set $\cup_{k'\in \mathbb{Z}^2}\partial Z'_{k',1}$). Moreover, for every $\epsilon>0$, we have
$$\kappa\left({x'\over \epsilon}\right)=k'\Longleftrightarrow x'\in Z'_{k',\epsilon}.$$

\item[-]For $k^{\prime}\in \mathcal{K}_{\epsilon}$, the restriction of $(\hat{u}_{\epsilon},   \hat P_\epsilon)$  to $Z^{\prime}_{k^{\prime},{\epsilon}}\times Z$ does not depend on $x^{\prime}$, whereas as a function of $z$ it is obtained from $(\tilde{u}_{\epsilon},  \tilde{P}_{\epsilon})$ by using the change of variables 
\begin{equation}\label{CVunfolding}
\displaystyle z^{\prime}=\frac{x^{\prime}- {\epsilon}k^{\prime}}{{\epsilon}},\end{equation}
which transforms $Z_{k^{\prime}, {\epsilon}}$ into $Z$.
\end{itemize}
\end{remark}

Following the  proof of \cite[Lemma 4.9]{Anguiano_SuarezGrau}, the following estimates relate  $(\hat \varphi_\epsilon,\hat \psi_\epsilon)$ to $(\tilde \varphi_\epsilon, \tilde \psi_\epsilon)$.
\begin{lemma}\label{estimates_relation}
We have the following estimates:
\begin{itemize}
\item[$(i)$] For every $\tilde\varphi_\epsilon \in L^q(\widetilde\Omega_\epsilon)^3$, $1\leq q<+\infty$,
$$\|\hat \varphi_\epsilon\|_{L^q(\omega\times Z)^3}\leq  \|\tilde \varphi_\epsilon\|_{L^q(\Omega)^3},$$
where $\hat \varphi_\epsilon$ is given by (\ref{phihat})$_1$. Similarly, for every $\tilde\psi \in L^{q'}(\widetilde\Omega_\epsilon)$, the function $\hat \psi_\epsilon$, given by (\ref{phihat})$_2$ satisfies
$$\|\hat \psi_\epsilon\|_{L^q(\omega\times Z)}\leq  \|\tilde \psi_\epsilon\|_{L^q(\Omega)}.$$
\item[$(ii)$] For every $\tilde \varphi\in W^{1,q}(\widetilde\Omega_\epsilon)^3$, $1\leq q<+\infty$, the function $\hat \varphi_\epsilon$ given by (\ref{phihat})$_1$ belongs to $L^q(\omega;W^{1,q}(Z)^3)$, and
$$\|D_{z'} \hat \varphi_\epsilon\|_{L^q(\omega\times Z)^{3\times 2}}\leq \epsilon  \|D_{x'}\tilde \varphi_\epsilon\|_{L^q(\Omega)^{3\times 2}},\quad \|\partial_{z_3} \hat \varphi_\epsilon\|_{L^q(\omega\times Z)^{3 }}\leq  \|\partial_{z_3}\tilde \varphi_\epsilon\|_{L^q(\Omega)^{3}},$$
$$ \|\mathbb{D}_{z'}[\hat \varphi_\epsilon]\|_{L^q(\omega\times Z)^{3\times 2}}\leq \epsilon  \|\mathbb{D}_{x'}[\tilde \varphi_\epsilon]\|_{L^q(\Omega)^{3\times 2}},\quad \|\partial_{z_3}[\hat \varphi_\epsilon]\|_{L^q(\omega\times Z)^{3 }}\leq \|\partial_{z_3}[\tilde \varphi_\epsilon]\|_{L^q(\Omega)^{3}}.$$
\end{itemize} 
\end{lemma}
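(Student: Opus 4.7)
The strategy is to exploit the piecewise structure of the unfolding formula~\eqref{phihat}: on each cell $Z'_{k',\epsilon}\times Z$ the function $\hat\varphi_\epsilon(x',z)$ is independent of $x'$, so we can decouple the $x'$ integration from the $z$ integration and then undo the cell dilation $z'=(x'-\epsilon k')/\epsilon$ noted in~\eqref{CVunfolding}. Throughout, $\tilde\varphi_\epsilon$ (resp.~$\tilde\psi_\epsilon$) is extended by zero outside $\omega$, and similarly for the derivatives, which is the source of the key inequality (the union $\bigcup_{k'\in\mathcal{K}_\epsilon}Z'_{k',\epsilon}$ may cover slightly more than $\omega$, but the extra contribution is zero).

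For part~$(i)$, I would write
\[
\int_{\omega\times Z}|\hat\varphi_\epsilon(x',z)|^q\,dx'dz = \sum_{k'\in\mathcal{K}_\epsilon}\int_{Z'_{k',\epsilon}\times Z}\bigl|\tilde\varphi_\epsilon(\epsilon k'+\epsilon z',z_3)\bigr|^q\,dx'dz.
\]
Since the integrand does not depend on $x'\in Z'_{k',\epsilon}$, the $x'$-integral simply contributes the factor $|Z'_{k',\epsilon}|=\epsilon^2$. Performing then the change of variable $y'=\epsilon k'+\epsilon z'$ (Jacobian $\epsilon^2$) in the $z'$-integral reconstructs $\int_{Z'_{k',\epsilon}\times(0,1)}|\tilde\varphi_\epsilon(y',z_3)|^q\,dy'dz_3$, and the two $\epsilon^2$ factors cancel. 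Summing over $k'$ and using the zero extension of $\tilde\varphi_\epsilon$ to bound the sum by $\|\tilde\varphi_\epsilon\|^q_{L^q(\Omega)^3}$ gives the first estimate. The scalar bound on $\hat\psi_\epsilon$ follows verbatim.

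For part~$(ii)$, the chain rule applied to~\eqref{phihat} yields pointwise
\[
D_{z'}\hat\varphi_\epsilon(x',z) = \epsilon\,(D_{x'}\tilde\varphi_\epsilon)(\epsilon k'+\epsilon z',z_3),\qquad \partial_{z_3}\hat\varphi_\epsilon(x',z) = (\partial_{z_3}\tilde\varphi_\epsilon)(\epsilon k'+\epsilon z',z_3),
\]
for $x'\in Z'_{k',\epsilon}$. Taking the $L^q$ norm and repeating the cellwise computation of part~$(i)$ immediately produces the factor $\epsilon^q$ in front of $\|D_{x'}\tilde\varphi_\epsilon\|^q_{L^q(\Omega)^{3\times 2}}$ and no extra factor for the $\partial_{z_3}$ bound, which is exactly the pair of gradient estimates. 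Finally, since $\mathbb{D}_{z'}[\cdot]$ is the symmetrization of $D_{z'}$ (restricted to its first two columns) and symmetrization is $1$-Lipschitz in every $L^q$ norm, the same chain-rule identity gives $\mathbb{D}_{z'}[\hat\varphi_\epsilon](x',z)=\epsilon\,\mathbb{D}_{x'}[\tilde\varphi_\epsilon](\epsilon k'+\epsilon z',z_3)$ on each $Z'_{k',\epsilon}\times Z$, and the same argument yields the two symmetric-gradient inequalities.

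I do not anticipate a real obstacle here: the proof is essentially a bookkeeping exercise combining the cellwise decomposition, Fubini, a linear change of variable, and the chain rule. The only point requiring minor care is justifying that summing over $k'\in\mathcal{K}_\epsilon$ yields an integral over $\Omega$ (rather than over a strictly larger set): this uses precisely the convention that $\tilde\varphi_\epsilon$ and its derivatives are extended by zero outside $\omega$, so the portion of $\bigcup_{k'\in\mathcal{K}_\epsilon}Z'_{k',\epsilon}$ lying outside $\omega$ contributes nothing, and the resulting upper bound by the full $L^q(\Omega)$ norm is exact.
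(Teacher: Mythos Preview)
Your proposal is correct and follows exactly the standard argument: the paper does not give a proof for this lemma but refers to \cite[Lemma 4.9]{Anguiano_SuarezGrau}, and what you wrote---cellwise decomposition, the observation that $\hat\varphi_\epsilon$ is constant in $x'$ on each $Z'_{k',\epsilon}$, the change of variable $y'=\epsilon k'+\epsilon z'$ whose Jacobian $\epsilon^2$ cancels $|Z'_{k',\epsilon}|$, and the chain rule for part~$(ii)$---is precisely that argument. The only minor sharpening worth noting is that the inequality (rather than equality) comes from the boundary cells where $|Z'_{k',\epsilon}\cap\omega|<\epsilon^2$, which you handle correctly via the zero extension.
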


\begin{definition}[Unfolded velocity and pressure] Let us define the unfolded velocity and pressure $(\hat u_\epsilon, \hat P_\epsilon)$ from $(\tilde u_\epsilon, \tilde P_\epsilon)$ depending on the type of fluid:
\begin{itemize}
\item[--] ({\it Pseudoplastic fluids and Newtonian fluids.}) From $(\tilde u_\epsilon, \tilde P_\epsilon)\in H^1_0(\Omega)^3\times L^2_0(\Omega)$, we define $(\hat u_\epsilon, \hat P_\epsilon)$ by (\ref{phihat}) with $\tilde \varphi_\epsilon=\tilde u_\epsilon$, $\tilde \psi_\epsilon=\tilde P_\epsilon$ and $q=2$.\\

\item[--] ({\it Dilatant fluids.}) From $(\tilde u_\epsilon, \tilde P_\epsilon)\in W^{1,r}_0(\Omega)^3\times L^{r'}_0(\Omega)$, we define $(\hat u_\epsilon, \hat P_\epsilon)$ by (\ref{phihat}) with $\tilde \varphi_\epsilon=\tilde u_\epsilon$, $\tilde \psi_\epsilon=\tilde P_\epsilon$ and $q=r$.
\end{itemize}
\end{definition}
Now, combining  estimates on the extended velocity (\ref{estimates_u_tilde})-(\ref{estimates_u_tilde2equal1}) and pressure (\ref{esti_P})-(\ref{esti_P_12}) with  Lemma~\ref{estimates_relation}, we deduce the following estimates on $(\hat u_\epsilon,\hat P_\epsilon)$.

\begin{lemma}\label{estimates_hat} The unfolded velocity/pressure pair $(\hat u_\epsilon,\hat P_\epsilon)$ satisfies the following estimates, depending on the type of fluid.
 \begin{itemize}
 \item[$(i)$] {\it (Pseudoplastic fluids and Newtonian fluids.)} Consider $1<r\leq 2$. There exists a constant $C>0$ independent of $\epsilon$, such that, for every value of $\gamma $,
 \begin{eqnarray}\medskip
 \|\hat u_\epsilon\|_{L^2(\omega\times Z)^3}\leq C\epsilon^{2-\gamma},& 
 \|D_{z}\hat u_\epsilon\|_{L^2(\omega\times Z)^{3\times 3}}\leq C\epsilon^{2-\gamma},& 
  \|\mathbb{D}_{z}[\hat u_\epsilon]\|_{L^2(\omega\times Z)^{3\times 3}}\leq C\epsilon^{2-\gamma},\label{estim_u_hat}\\
 \medskip
& \|\hat P_\epsilon\|_{L^2(\omega\times Z)}\leq C.&\label{estim_P_hat}
    \end{eqnarray}
 \item[$(ii)$] ({\it Dilatant fluids.}) Consider $r>2$. There exists a constant $C>0$ independent of $\epsilon$, such that  estimates (\ref{estim_u_hat}) hold true, and also, depending on the value of $\gamma$, we have:
 \begin{itemize}
 \item[--] If $\gamma<1$,
 \begin{equation}\label{estim_u_hat2}
 \begin{array}{c}
 \|\hat u_\epsilon\|_{L^r(\omega\times Z)^3}\leq C\epsilon^{-{2\over r}(\gamma-1)+1},\quad  
 \|D_{z}\hat u_\epsilon\|_{L^r(\omega\times Z)^{3\times 3}}\leq C\epsilon^{-{2\over r}(\gamma-1)+1},\\
\\
  \|\mathbb{D}_{z}[\hat u_\epsilon]\|_{L^r(\omega\times Z)^{3\times 3}}\leq C\epsilon^{-{2\over r}(\gamma-1)+1}.
 \end{array} \end{equation}
 \item[--] If $\gamma>1$,
 \begin{equation}\label{estim_u_hat22}
 \begin{array}{c}
 \|\hat u_\epsilon\|_{L^r(\omega\times Z)^3}\leq C\epsilon^{{-{\gamma-1\over r-1}}+1},\quad 
 \|D_{z}\hat u_\epsilon\|_{L^r(\omega\times Z)^{3\times 3}}\leq C\epsilon^{{-{\gamma-1\over r-1}}+1},\\
 \\
  \|\mathbb{D}_{z}[\hat u_\epsilon]\|_{L^r(\omega\times Z)^{3\times 3}}\leq C\epsilon^{{-{\gamma-1\over r-1}}+1}.
  \end{array}
  \end{equation}
  \item[--] If $\gamma=1$,
 \begin{equation}\label{estim_u_hat22_1}
 \begin{array}{c}
 \|\hat u_\epsilon\|_{L^r(\omega\times Z)^3}\leq C\epsilon,\quad 
 \|D_{z}\hat u_\epsilon\|_{L^r(\omega\times Z)^{3\times 3}}\leq C\epsilon,\\
 \\
  \|\mathbb{D}_{z}[\hat u_\epsilon]\|_{L^r(\omega\times Z)^{3\times 3}}\leq C\epsilon.
  \end{array}
  \end{equation}

 \end{itemize}
 Moreover, the pressure satisfies
  \begin{eqnarray}\medskip
  \|\hat P_\epsilon\|_{L^{r'}(\omega\times Z)}\leq C,&\label{estim_P_hat2}
    \end{eqnarray}
    where $r'$ is the conjugate exponent of $r$.
 \end{itemize}

 \end{lemma}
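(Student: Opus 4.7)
\textbf{Proof proposal for Lemma \ref{estimates_hat}.}

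The plan is to derive every estimate by combining the norm-preserving/rescaling properties of the unfolding operator (Lemma~\ref{estimates_relation}) with the sharp a priori estimates on $(\tilde u_\epsilon,\tilde P_\epsilon)$ already established in Lemmas~\ref{Estimates_lemma} and~\ref{Estimates_extended_lemma}. Since $\tilde u_\epsilon$ is extended by zero to $\Omega\setminus\widetilde\Omega_\epsilon$, every norm on $\Omega$ in those statements equals the corresponding norm on $\widetilde\Omega_\epsilon$, so Lemma~\ref{estimates_relation} applies verbatim. The bookkeeping step is the decomposition of the rescaled gradient: by the definition of $D_\epsilon$,
\begin{equation*}
\|D_{x'}\tilde u_\epsilon\|_{L^q(\Omega)}\le \|D_\epsilon\tilde u_\epsilon\|_{L^q(\Omega)},\qquad \|\partial_{z_3}\tilde u_\epsilon\|_{L^q(\Omega)}\le \epsilon\,\|D_\epsilon\tilde u_\epsilon\|_{L^q(\Omega)},
\end{equation*}
and similarly for the symmetric gradient. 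Combined with the rescaling inequalities of Lemma~\ref{estimates_relation}(ii), this yields the master bound
\begin{equation*}
\|D_z\hat u_\epsilon\|_{L^q(\omega\times Z)}+\|\mathbb{D}_z[\hat u_\epsilon]\|_{L^q(\omega\times Z)}\le C\,\epsilon\,\|D_\epsilon\tilde u_\epsilon\|_{L^q(\Omega)},
\end{equation*}
because $\|D_{z'}\hat u_\epsilon\|\le \epsilon\|D_{x'}\tilde u_\epsilon\|\le \epsilon\|D_\epsilon\tilde u_\epsilon\|$ and $\|\partial_{z_3}\hat u_\epsilon\|\le \|\partial_{z_3}\tilde u_\epsilon\|\le \epsilon\|D_\epsilon\tilde u_\epsilon\|$. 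The $L^q$ bound on $\hat u_\epsilon$ itself follows from Lemma~\ref{estimates_relation}(i): $\|\hat u_\epsilon\|_{L^q(\omega\times Z)}\le\|\tilde u_\epsilon\|_{L^q(\Omega)}$.

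With this in hand, part (i) (pseudoplastic and Newtonian case, $q=2$) is immediate: estimate \eqref{estimates_u_tilde} gives $\|\tilde u_\epsilon\|_{L^2}\le C\epsilon^{2-\gamma}$ and $\|D_\epsilon\tilde u_\epsilon\|_{L^2},\|\mathbb{D}_\epsilon[\tilde u_\epsilon]\|_{L^2}\le C\epsilon^{1-\gamma}$, so the master bound yields precisely $C\epsilon\cdot\epsilon^{1-\gamma}=C\epsilon^{2-\gamma}$ for the gradient and symmetric gradient of $\hat u_\epsilon$. The pressure bound \eqref{estim_P_hat} is nothing but Lemma~\ref{estimates_relation}(i) applied to $\tilde P_\epsilon$, followed by \eqref{esti_P}.

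For part (ii) (dilatant case, $q=r$), the $L^2$ part of \eqref{estim_u_hat} is obtained exactly as in part (i), since estimate \eqref{estimates_u_tilde} remains valid for $r>2$. For the $L^r$ bounds, one applies the master bound with $q=r$ and then inserts the $L^r$ estimate on $D_\epsilon\tilde u_\epsilon$ relevant to each regime: \eqref{estimates_u_tilde2less1} for $\gamma<1$, \eqref{estimates_u_tilde2greater1} for $\gamma>1$, and \eqref{estimates_u_tilde2equal1} for $\gamma=1$. In every case, the prefactor $\epsilon$ coming from the unfolding rescaling multiplies the corresponding exponent, giving $\epsilon^{-\frac{2}{r}(\gamma-1)+1}$, $\epsilon^{-\frac{\gamma-1}{r-1}+1}$, and $\epsilon$ respectively. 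The $L^r$ bound on $\hat u_\epsilon$ itself is obtained either via Lemma~\ref{estimates_relation}(i) together with the $L^r$ estimate of $\tilde u_\epsilon$ in Lemma~\ref{Estimates_lemma}(ii), or equivalently by a Poincaré inequality in the $z_3$-variable using $\hat u_\epsilon=0$ on $Z_f'\times\{0,1\}$. Finally, the pressure bound \eqref{estim_P_hat2} is a direct consequence of Lemma~\ref{estimates_relation}(i) combined with \eqref{esti_P_12}.

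There is no genuine mathematical obstacle in this proof: all ingredients are already proved. The only care required is the combinatorial bookkeeping to ensure that, for each fluid type and each regime of $\gamma$, the product of the $\epsilon$-rescaling factor of the unfolding with the a priori $\epsilon$-exponent coming from Lemmas~\ref{Estimates_lemma}--\ref{Estimates_extended_lemma} matches the stated exponent in the conclusion.
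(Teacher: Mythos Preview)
Your proposal is correct and follows precisely the approach indicated in the paper, which merely states that the result follows by combining the a priori estimates \eqref{estimates_u_tilde}--\eqref{estimates_u_tilde2equal1} and \eqref{esti_P}--\eqref{esti_P_12} with Lemma~\ref{estimates_relation}. Your explicit ``master bound'' $\|D_z\hat u_\epsilon\|_{L^q}\le C\epsilon\|D_\epsilon\tilde u_\epsilon\|_{L^q}$, obtained by splitting $D_z$ into its $z'$- and $z_3$-parts and using $\|\partial_{z_3}\tilde u_\epsilon\|\le\epsilon\|D_\epsilon\tilde u_\epsilon\|$, is exactly the bookkeeping that the paper leaves implicit.
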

\subsection{Compactness results.}\label{sec:compactness}  In this section, we analyze the asymptotic behaviour of extended functions $(\tilde u_\epsilon, \tilde P_\epsilon)$ and the corresponding unfolded functions $(\hat u_\epsilon,\hat P_\epsilon)$, when $\epsilon$ tends to zero.
 
\begin{lemma} \label{lemma_compactness} The velocities $\tilde u_\epsilon, \hat u_\epsilon$ satisfy the following convergence results.
\begin{itemize}
\item[$(i)$] (Pseudoplastic fluids and Newtonian fluids.) Consider $1<r\leq 2$, then there exist
\begin{itemize}
\item  $\tilde u\in H^1(0,1;L^2(\omega)^3)$ where  $\tilde u=0$ on $\omega\times \{0, 1\}$ and $\tilde u_3\equiv 0$,  
such that, up to a subsequence,
\begin{equation}\label{conv_vel_tilde}
\epsilon^{\gamma-2}\tilde u_\epsilon \rightharpoonup (\tilde u',0)\hbox{ weakly in } H^1(0,1;L^2(\omega)^3),
\end{equation}

\item  $\hat u\in L^2(\omega; H^1_{0,\#}(Z)^3)$, with $\hat u=0$ on $\omega\times Z'\times \{0, 1\}$,  such that, up to a subsequence,
\begin{equation}\label{conv_vel_gorro}
\epsilon^{\gamma-2}\hat u_\epsilon\rightharpoonup \hat u\hbox{ weakly in }L^2(\omega; H^1(Z)^3).\end{equation}
\end{itemize}
In addition, the following relation holds between $\tilde u$ and $\hat u$:
\begin{equation}\label{equality_integrals}
\tilde u(x',z_3)=\int_{Z'}\hat u(x',z)\,dz'\quad\hbox{with}\quad \int_{Z'}\hat u_3(x',z)\,dz'=0,
\end{equation} 
and so,
\begin{equation}\label{equality_integrals2}
\int_0^1\tilde u(x',z_3)dz_3=\int_{Z}\hat u(x',z)\,dz\quad\hbox{with}\quad \int_{Z}\hat u_3(x',z)\,dz=0.
\end{equation}
\item[$(ii)$] (Dilatant fluids.) Consider $r>2$, then
\begin{itemize}

\item[$\bullet$]   if $\gamma<1$, there exist   
\begin{itemize}
\item[--]  $\tilde u\in H^1(0,1;L^2(\omega)^3)$ where  $\tilde u=0$ on $\omega\times \{0, 1\}$ and $\tilde u_3\equiv 0$,  
such that, up to a subsequence,
\begin{equation}\label{conv_vel_tilde2}
\epsilon^{\gamma-2}\tilde u_\epsilon \rightharpoonup \tilde u=(\tilde u',0)\hbox{ weakly in } H^1(0,1;L^2(\omega)^3),
\end{equation}

\item[--] $\hat u\in L^2(\omega; H^1_{0,\#}(Z)^3)$, with $\hat u=0$ on $\omega\times Z'\times \{0, 1\}$,  such that, up to a subsequence,

\begin{equation}\label{conv_vel_gorro2}
\epsilon^{\gamma-2}\hat u_\epsilon\rightharpoonup \hat u \hbox{ weakly in }L^2(\omega; H^1(Z)^3),\end{equation}
\end{itemize}
where the relations between $\tilde u$ and $\hat u$ given by (\ref{equality_integrals}) and (\ref{equality_integrals2}) hold;

\item[$\bullet$]  if $\gamma\geq 1$, there exist  
\begin{itemize}
\item[--] $\tilde u\in W^{1,r}(0,1;L^r(\omega)^3)$ where  $\tilde u=0$ on $\omega\times \{0, 1\}$ and $\tilde u_3\equiv 0$,  such that, up to a subsequence, in the case $\gamma>1$, 
\begin{equation}\label{conv_vel_tilde22}
\epsilon^{{\gamma-r\over r-1}}\tilde u_\epsilon \rightharpoonup \tilde u=(\tilde u',0)\hbox{ weakly in } W^{1,r}(0,1;L^r(\omega)^3),
\end{equation}
and in the case $\gamma=1$, 
\begin{equation}\label{conv_vel_tilde22_1}
\epsilon^{-1}\tilde u_\epsilon \rightharpoonup (\tilde u',0)\hbox{ weakly in } W^{1,r}(0,1;L^r(\omega)^3),
\end{equation}

\item[--]  $\hat u\in L^r(\omega; W^{1,r}_{0,\#}(Z)^3)$, with $\hat u=0$ on $\omega\times Z'\times \{0, 1\}$, such that, up to a subsequence, in the case $\gamma>1$,  
\begin{equation}\label{conv_vel_gorro22}
\epsilon^{{\gamma-r\over r-1}}\hat u_\epsilon\rightharpoonup \hat u\hbox{ weakly in }L^r(\omega; W^{1,r}(Z)^3),\end{equation}
and in the case $\gamma=1$, 
\begin{equation}\label{conv_vel_gorro22_1}
\epsilon^{-1}\hat u_\epsilon\rightharpoonup \hat u\hbox{ weakly in }L^r(\omega; W^{1,r}(Z)^3),\end{equation}
\end{itemize}

where the relation between $\tilde u$ and $\hat u$ given by (\ref{equality_integrals}) and (\ref{equality_integrals2}) hold.
\end{itemize}
\end{itemize}
Moreover, $\tilde u$ and $\hat u$ satisfy the following divergence conditions for any $r>1$:
\begin{equation}\label{divxproperty}
{\rm div}_{x'}\left(\int_0^1\tilde u'(x',z_3)\,dz_3\right)=0\  \hbox{ in }\omega,\quad \left(\int_0^1\tilde u'(x',z_3)\,dz_3\right)\cdot n=0\  \hbox{ in }\partial \omega,
\end{equation}
\begin{equation}\label{divyproperty}
 {\rm div}_{z}\,\hat u(x',z)=0\  \hbox{ in }\omega\times Z_f,\quad   {\rm div}_{x'}\left(\int_{Z_f}\hat u'(x',z)\,dz\right)=0\  \hbox{ in }\omega, \quad\left(\int_{Z_f}\hat u'(x',z)\,dz\right)\cdot n=0\  \hbox{ on }\partial\omega.
\end{equation}
\end{lemma}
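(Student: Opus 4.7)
The plan is to obtain the weak convergences directly from the uniform estimates proved in Lemma~\ref{estimates_hat} and Lemma~\ref{Estimates_lemma}, then to identify the properties of the limits (vanishing third component, boundary behaviour, periodicity, relations between $\tilde u$ and $\hat u$, and divergence conditions) by exploiting the rescaled incompressibility condition and the specific form of the unfolding operator.

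For each combination of $r$ and $\gamma$, the estimates \eqref{estim_u_hat}, \eqref{estim_u_hat2}, \eqref{estim_u_hat22} and \eqref{estim_u_hat22_1} on $\hat u_\epsilon$ in $L^q(\omega;W^{1,q}(Z)^3)$ (with $q=2$ or $q=r$, depending on the regime) give, after multiplying by the corresponding power of $\epsilon$, a uniform bound, from which a weak convergence to some $\hat u$ is extracted up to a subsequence. Similarly, estimates \eqref{estimates_u_tilde}, \eqref{estimates_u_tilde2less1}, \eqref{estimates_u_tilde2greater1}, \eqref{estimates_u_tilde2equal1} applied to $\tilde u_\epsilon$ yield a weak limit $\tilde u$ in $H^1(0,1;L^2(\omega)^3)$ or $W^{1,r}(0,1;L^r(\omega)^3)$; note that the control of the $z_3$-derivative comes from the fact that $\partial_{z_3}\tilde u_\epsilon = \epsilon (D_\epsilon\tilde u_\epsilon)_{\cdot,3}$ is uniformly bounded after rescaling. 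The boundary condition $\tilde u = 0$ on $\omega\times\{0,1\}$ (respectively $\hat u = 0$ on $\omega \times Z'\times\{0,1\}$) passes to the weak limit from the homogeneous condition $u_\epsilon = 0$ on $\partial Q_\epsilon$. The condition $\hat u = 0$ on $\omega\times T$ follows because $\tilde u_\epsilon$ vanishes on $T_\epsilon$ after extension by zero, and the definition \eqref{phihat} of the unfolding. The $Z'$-periodicity of $\hat u$ (i.e.\ $\hat u \in L^q(\omega;W^{1,q}_{0,\#}(Z)^3)$) is inherited from the construction of $\hat u_\epsilon$, since the values of $\tilde u_\epsilon$ on the common boundaries of adjacent cells $Z'_{k',\epsilon}$ are identified by the cell-by-cell definition in \eqref{phihat}; this is preserved under weak convergence.

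The key structural facts remain: the third component vanishes, and $\tilde u$ is the $z'$-average of $\hat u$. The first is obtained by writing ${\rm div}_\epsilon \tilde u_\epsilon = 0$ as $\partial_{z_3}\tilde u_{\epsilon,3} = -\epsilon\,{\rm div}_{x'}\tilde u'_\epsilon$; multiplying by the appropriate power of $\epsilon$ and passing to the weak limit shows that $\partial_{z_3}\tilde u_3 = 0$, and the zero boundary condition at $z_3 = 0,1$ gives $\tilde u_3 \equiv 0$. The identity \eqref{equality_integrals} follows from a standard property of the unfolding operator: for a fixed $z_3$, $\int_{Z'}\hat u_\epsilon(x',z)\,dz' - \tilde u_\epsilon(x',z_3)$ is an $O(\epsilon)$ perturbation in $L^q(\omega)$ (it amounts to the difference between the mean value on cells of size $\epsilon$ and the function itself), and this perturbation vanishes in the weak limit after rescaling; \eqref{equality_integrals2} is obtained by further integrating over $z_3\in(0,1)$. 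Finally, $\int_{Z'}\hat u_3(x',z)\,dz' = 0$ combines this identity with $\tilde u_3 \equiv 0$.

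The divergence conditions are the remaining step. First, ${\rm div}_z \hat u = 0$ in $\omega\times Z_f$ follows from ${\rm div}_\epsilon\tilde u_\epsilon = 0$ written in unfolded coordinates: a direct computation gives ${\rm div}_z \hat u_\epsilon = \epsilon\,{\rm div}_\epsilon \tilde u_\epsilon = 0$ in $\omega\times Z_f$, and this relation survives passage to the weak limit. Next, the horizontal divergence conditions in \eqref{divxproperty}--\eqref{divyproperty} and the boundary condition $\tilde V\cdot n = 0$ on $\partial\omega$ are obtained by testing $\mathrm{div}_\epsilon \tilde u_\epsilon = 0$ against smooth functions $\varphi(x')$ independent of $z_3$, integrating by parts, and passing to the limit using \eqref{equality_integrals2}. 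The main difficulty I anticipate is keeping track of the correct $\epsilon$-power in each of the three dilatant sub-cases ($\gamma<1$, $\gamma = 1$, $\gamma>1$), so that the rescaled divergence identity remains meaningful in the limit; this is largely a bookkeeping matter, but it must be done carefully because the natural space for $\tilde u$ switches between $L^2$ and $L^r$ depending on $\gamma$, and the $L^2$-versus-$L^r$ embedding arguments used in Lemma~\ref{Estimates_lemma} must be revisited consistently.
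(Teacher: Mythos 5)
Your proposal is correct and follows essentially the same route as the paper, whose own proof simply invokes the compactness results of \cite[Lemmas 5.2 and 5.4]{Anguiano_SuarezGrau} applied with the estimates of Lemmas \ref{Estimates_lemma} and \ref{estimates_hat}; your bookkeeping remark about choosing the $L^2$ versus the $L^r$ estimates in the dilatant sub-cases is exactly the point the paper makes explicit. One small correction to your justification of \eqref{equality_integrals}: the difference between $\epsilon^{\gamma-2}\tilde u_\epsilon(\cdot,z_3)$ and its cell average $\epsilon^{\gamma-2}\int_{Z'}\hat u_\epsilon(\cdot,z)\,dz'$ is \emph{not} $O(\epsilon)$ in $L^2(\omega)$ — the Poincar\'e--Wirtinger bound on each cell costs a factor $\epsilon\,\|D_{x'}\tilde u_\epsilon\|_{L^2}$, and since $\|D_{x'}\tilde u_\epsilon\|_{L^2}\sim\epsilon^{1-\gamma}$ this difference is only $O(1)$ after rescaling. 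The identification still holds, but by duality rather than by norm convergence: the cell-averaging operator is self-adjoint and converges strongly to the identity on fixed test functions, so the averages and the functions themselves share the same weak limit. The rest of your argument (the rescaled divergence identity for $\tilde u_3\equiv 0$ and ${\rm div}_z\hat u=0$, and the passage to the limit against $x'$-dependent test functions for \eqref{divxproperty}--\eqref{divyproperty}) is the standard one and matches the cited reference.
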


\begin{proof} The proof is based on compactness results given in \cite{Anguiano_SuarezGrau}: 
\begin{itemize}
\item[$(i)$] Arguing as in \cite[Lemma 5.2.-$(i)$]{Anguiano_SuarezGrau}, we obtain convergence (\ref{conv_vel_tilde}) and divergence condition (\ref{divxproperty}). Moreover, proceeding similarly as in  \cite[Lemma 5.4.-$(i)$]{Anguiano_SuarezGrau} we deduce convergence (\ref{conv_vel_gorro}), properties (\ref{equality_integrals}) and (\ref{equality_integrals2}), and divergence conditions (\ref{divyproperty}).

\item[$(ii)$] The proof also follows the lines of $(i)$, just taking into account the estimates of $\tilde u_\epsilon$ and $\hat u_\epsilon$ in each case. Nevertheless, for $r>2$, the choice of the relevant estimates depends on the value of $\gamma$.
\begin{itemize}
\item[-]In case $\gamma<1$, by Lemma \ref{Estimates_lemma}-$(ii)$,  the velocity satisfies two types of estimates: estimates (\ref{estimates_u_tilde}) in $L^2(\widetilde\Omega_\epsilon)$ and (\ref{estimates_u_tilde2less1}) in $L^r(\widetilde\Omega_\epsilon)$. Noticing that $2-\gamma> -{2\over r}(\gamma-1)$, estimates (\ref{estimates_u_tilde}) are in fact the optimal ones, so we proceed as in $(i)$ to finish the proof.

\item[-]In case $\gamma>1$,  if we used the  $L^2$-estimates given in Lemmas \ref{Estimates_lemma}-$(ii)$ and \ref{estimates_hat}-$(ii)$, we would obtain convergences (\ref{conv_vel_tilde2}) and (\ref{conv_vel_gorro2}), respectively. However, we would not be able to pass to the limit in the formulation (\ref{v_ineq_carreau_12}) because the term $(1+\lambda\epsilon^{2(1-\gamma)}|\mathbb{D}_y[\varphi]|^2)^{{r\over 2}-1}$ diverges. For that reason, we apply the $L^r$-estimates of the velocities $\tilde u_\epsilon$ and $\hat u_\epsilon$  given in Lemmas \ref{Estimates_lemma}-$(ii)$ and \ref{estimates_hat}-$(ii)$ respectively. We argue as in $(i)$ to conclude.

\item[-]In case $\gamma=1$, we proceed as in case $\gamma>1$ by considering the $L^r$-estimates of the velocities $\tilde u_\epsilon$ and $\hat u_\epsilon$ given in Lemmas \ref{Estimates_lemma}-$(ii)$ and \ref{estimates_hat}-$(ii)$ respectively.
\end{itemize}
\end{itemize}

\end{proof}

\begin{lemma}\label{lemma_compactness_p}   The extended pressure $\tilde P_\epsilon$ and the corresponding unfolding function $\hat P_\epsilon$ satisfy the following convergence results.
\begin{itemize}
\item[$(i)$] (Pseudoplastic fluids and Newtonian fluids.) Consider $1<r\leq 2$. There exists $\tilde P\in L^2_0(\omega)$ such that
\begin{equation}\label{conv_pres_tilde}
\tilde P_\epsilon\to \tilde P \hbox{ strongly in }L^2(\Omega),
\end{equation}
\begin{equation}\label{conv_pres_hat}
\hat P_\epsilon\to \tilde P \hbox{ strongly in }L^2(\omega\times Z).
\end{equation}
\item[$(ii)$] (Dilatant fluids.) Consider $r>2$. There exists $\tilde P\in L^{r'}_0(\omega)$ such that
\begin{equation}\label{conv_pres_tilde2}
\tilde P_\epsilon\to \tilde P \hbox{ strongly in }L^{r'}(\Omega),
\end{equation}
\begin{equation}\label{conv_pres_hat2}
\hat P_\epsilon\to \tilde P \hbox{ strongly in }L^{r'}(\omega\times Z),
\end{equation}
where $r'$ is the conjugate exponent of $r$.
\end{itemize}
\end{lemma}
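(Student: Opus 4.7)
The plan is to treat cases $(i)$ and $(ii)$ simultaneously by setting $q=2$ for pseudoplastic and Newtonian fluids and $q=r'$ for dilatant ones, and to follow the pressure compactness strategy already developed in \cite{Anguiano_SuarezGrau} for the linear case. The uniform bound $\|\tilde P_\epsilon\|_{L^q(\Omega)}\le C$ from Lemma \ref{Estimates_extended_lemma} first allows to extract, up to a subsequence, a weak limit $\tilde P \in L^q_0(\Omega)$. The second bound $\|\nabla_\epsilon \tilde P_\epsilon\|_{W^{-1,q}(\Omega)^3} \le C$ contains in particular
\[
\|\partial_{z_3}\tilde P_\epsilon\|_{W^{-1,q}(\Omega)} \;=\; \epsilon\,\|\epsilon^{-1}\partial_{z_3}\tilde P_\epsilon\|_{W^{-1,q}(\Omega)}\;\le\; C\epsilon \;\longrightarrow\; 0,
\]
so that passing to the distributional limit yields $\partial_{z_3}\tilde P = 0$: the limit $\tilde P$ depends only on $x'$ and may be identified with an element of $L^q_0(\omega)$.

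To upgrade the weak convergence to a strong one in $L^q(\Omega)$, I would use the decomposition $\tilde P_\epsilon(x',z_3) = \bar P_\epsilon(x') + \tilde P_\epsilon^\sharp(x',z_3)$, with $\bar P_\epsilon(x') = \int_0^1 \tilde P_\epsilon(x',z_3)\,dz_3$ and an oscillatory remainder $\tilde P_\epsilon^\sharp$ of zero vertical mean. A Poincar\'e--Wirtinger argument in $z_3$ combined with the previous bound on $\partial_{z_3}\tilde P_\epsilon$ should give $\|\tilde P_\epsilon^\sharp\|_{L^q(\Omega)}\to 0$. The two-dimensional part $\bar P_\epsilon$ is bounded in $L^q(\omega)$ and inherits a uniform $W^{-1,q}(\omega)^2$ bound on $\nabla_{x'}\bar P_\epsilon$ (obtained by testing the estimate on $\nabla_{x'}\tilde P_\epsilon$ against $z_3$-independent functions). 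Its strong convergence in $L^q(\omega)$ then rests on the tailored pressure compactness argument of \cite{Anguiano_SuarezGrau} for thin porous media, which exploits the specific structure of the momentum equation in $\widetilde\Omega_\epsilon$. This last step is the \emph{main obstacle}: it does not follow from Rellich--Kondrachov compactness alone, since a bound on $\bar P_\epsilon$ in $L^q(\omega)$ together with a bound on $\nabla_{x'}\bar P_\epsilon$ only in $W^{-1,q}(\omega)^2$ is not enough for $L^q$ compactness by standard Sobolev embeddings.

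Finally, for the unfolded pressure $\hat P_\epsilon$, the estimate $\|\hat\psi_\epsilon\|_{L^q(\omega\times Z)}\le \|\tilde\psi_\epsilon\|_{L^q(\Omega)}$ of Lemma \ref{estimates_relation}-$(i)$, applied to $\tilde P_\epsilon - \tilde P$ (with $\tilde P$ extended trivially in $z_3$), transfers the strong convergence from $L^q(\Omega)$ to $L^q(\omega\times Z)$, provided one checks that the unfolding of the limit $\tilde P(x')$ itself converges strongly to $\tilde P$ in $L^q(\omega\times Z)$. The latter is a routine verification based on the density of continuous functions in $L^q(\omega)$ and the definition \eqref{phihat} of the unfolding operator, which closes the proof.
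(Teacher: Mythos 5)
Your overall skeleton matches the paper's: extract a weak limit from the $L^{q'}$ bound of Lemma \ref{Estimates_extended_lemma}, deduce $\partial_{z_3}\tilde P=0$ from the $O(\epsilon)$ bound on $\partial_{z_3}\tilde P_\epsilon$ in the negative Sobolev norm, upgrade to strong convergence by a pressure-compactness argument tied to the momentum equation, and transfer the strong convergence to $\hat P_\epsilon$ through the unfolding operator (the paper invokes \cite[Proposition 1.9-(ii)]{Cioran-book} for that last step; your density argument is an acceptable elementary substitute). The paper does not use your vertical-mean decomposition: it applies the argument of \cite[Lemma 4.4]{Bourgeat1} directly to $\tilde P_\epsilon$. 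That argument runs as follows: from the identification of $\nabla_\epsilon\tilde P_\epsilon$ in \eqref{extension_1} and the velocity estimates, the viscous contributions to $\nabla_{x'}\tilde P_\epsilon$ vanish strongly in $W^{-1,q'}(\Omega)^2$, so $\nabla_{x'}\tilde P_\epsilon$ converges \emph{strongly} (not merely boundedly) in $W^{-1,q'}$; combining this with the compact embedding $L^{q'}(\Omega)\hookrightarrow\hookrightarrow W^{-1,q'}(\Omega)$ and the Ne\v{c}as inequality shows that $\tilde P_\epsilon$ is a Cauchy sequence in $L^{q'}(\Omega)$. This is exactly the mechanism you are missing when you observe (correctly) that an $L^{q'}(\omega)$ bound on $\bar P_\epsilon$ plus a mere $W^{-1,q'}$ \emph{bound} on its gradient is not enough; the extra input is the strong $W^{-1,q'}$ convergence of the pressure gradient coming from the equation. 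Pointing to "a tailored compactness argument" without naming this mechanism leaves the crux of the lemma unproved, even though the paper itself is also terse here and simply cites \cite{Bourgeat1}.

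A second, more concrete problem is your treatment of the remainder $\tilde P_\epsilon^\sharp$. The available information is $\|\epsilon^{-1}\partial_{z_3}\tilde P_\epsilon\|_{W^{-1,q'}(\Omega)}\le C$, i.e.\ $\partial_{z_3}\tilde P_\epsilon\to 0$ only in $W^{-1,q'}(\Omega)$; there is no $L^{q'}$ bound on $\partial_{z_3}\tilde P_\epsilon$. A Poincar\'e--Wirtinger inequality in $z_3$ therefore cannot deliver $\|\tilde P_\epsilon^\sharp\|_{L^{q'}(\Omega)}\to 0$: to recover an $L^{q'}$ norm from negative-norm information on derivatives one needs control of the \emph{full} gradient in $W^{-1,q'}$ (Ne\v{c}as), and $\nabla_{x'}\tilde P_\epsilon^\sharp$ is not small. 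So this sub-step fails as written. The fix is simply to drop the decomposition and run the Bourgeat--Mikeli\'c argument on $\tilde P_\epsilon$ itself, as the paper does; the $z_3$-independence of the limit is then obtained separately from $\partial_{z_3}\tilde P_\epsilon\to 0$ in the distributional sense, which you already have.
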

\begin{proof} We give some remarks concerning case $(i)$, case $(ii)$ being similar. The first estimate in (\ref{esti_P}) implies, up to a subsequence, the existence of $\tilde P\in L^2_0(\Omega)$ such that
\begin{equation}\label{conv_p_proof}
\tilde P_\epsilon\rightharpoonup \tilde P\hbox{ weakly in }L^2(\Omega).
\end{equation}
Also, from the second estimate in (\ref{esti_P}), since $\partial_{z_3}\tilde P_\epsilon/\epsilon$ also converges weakly in $H^{-1}(\Omega)$, we obtain $\partial_{z_3}\tilde P=0$ and so $\tilde P$ is independent of $z_3$. Moreover, arguing in \cite[Lemma 4.4]{Bourgeat1}, we deduce that the convergence (\ref{conv_p_proof}) of the pressure $\tilde P_\epsilon$ is in fact strong. Since $\tilde P_\epsilon$ has null mean value in $\Omega$, then $\tilde P$ has null mean value in $\omega$, which concludes the proof of (\ref{conv_pres_tilde}). Finally,  the strong convergence of $\hat P_\epsilon$ given in (\ref{conv_pres_hat}) follows from  \cite[Proposition 1.9-(ii)]{Cioran-book} and the strong convergence of $\tilde P_\epsilon$ given in (\ref{conv_pres_tilde}).
\end{proof}

\subsection{Proof of Theorems \ref{mainthmPseudo}, \ref{mainthmDilatant} and \ref{mainthmNewtonian}.}\label{sec:proofs_of_thms}
Using monotonicity arguments together with Minty's lemma (see for instance \cite{EkelandTemam, Tapiero2}), we derive a variational inequality that will be useful in the proofs.  
 
We choose a test function $v(x',z)\in \mathcal{D}(\omega; C^\infty_{\#}(Z)^3)$ with $v(x',z)=0$ in $\omega\times T$ and on $\omega \times Z'\times \{0, 1\}$. Multiplying (\ref{2}) by $v(x',x'/\epsilon,z_3)$, integrating by parts, and taking into account the extension of $\tilde u_\epsilon$ and $\tilde P_\epsilon$, we have
$$\begin{array}{l}
\displaystyle
\medskip
\epsilon^\gamma (\eta_0-\eta_\infty)\int_{ \Omega }(1+\lambda|\mathbb{D}_\epsilon[\tilde u_\epsilon]|^2)^{{r\over 2}-1}\mathbb{D}_\epsilon[\tilde u_\epsilon] :\left( \mathbb{D}_{x'}[ v]+ \epsilon^{-1}\mathbb{D}_{z}[ v]\right)\,dx'dz_3\\
\medskip
\displaystyle
+\epsilon^\gamma\eta_\infty\int_{ \Omega}\mathbb{D}_\epsilon[\tilde u_\epsilon] :\left( \mathbb{D}_{x'}[ v]+ \epsilon^{-1}\mathbb{D}_{z}[ v]\right)\,dx'dz_3\\
\medskip
\displaystyle
-\int_{\Omega}\tilde P_\epsilon \left({\rm div}_{x'}v'+\epsilon^{-1}{\rm div}_{z}v\right)\,dx'dz_3=\int_{\Omega} f'\cdot v'\,dx'dz_3+O_\epsilon,
\end{array}
$$
where $O_\epsilon$ is a generic real sequence depending on $\epsilon$ that can change from line to line.

By the change of variables given in Remark \ref{remarkCV}, we obtain
\begin{equation}\label{form_var_hat}\begin{array}{l}
\displaystyle
\medskip
\epsilon^{\gamma-1}(\eta_0-\eta_\infty)\int_{ \omega\times Z }(1+\lambda |\epsilon^{-1}\mathbb{D}_{z}[\hat u_\epsilon]|^2)^{{r\over 2}-1}\left(\epsilon^{-1} \mathbb{D}_{z}[\hat u_\epsilon] \right): \mathbb{D}_{z}[  v]\,dx'dz\\
\medskip
\displaystyle
+\epsilon^{\gamma-1}\eta_\infty\int_{ \omega\times Z}\epsilon^{-1}\mathbb{D}_{z}[\hat u_\epsilon] : \mathbb{D}_{z}[ v]\,dx'dz\\
\medskip
\displaystyle
-\int_{\omega\times Z}\hat P_\epsilon\, {\rm div}_{x'}v'\,dx'dz-\epsilon^{-1}\int_{\omega\times Z}\hat P_\epsilon\,{\rm div}_{z}v\,dx'dz=\int_{\omega\times Z} f'\cdot v'\,dx'dz+O_\epsilon,
\end{array}
\end{equation}
with $|O_\epsilon|\leq C\epsilon$ for every $\gamma\in\mathbb{R}$.

Now, let us define the functional $J_r$ by
$$
J_r(v)={\eta_0-\eta_\infty\over r\lambda}\int_{\omega\times Z}(1+\lambda|\mathbb{D}_z[v]|^2)^{r\over 2}dx'dz+{\eta_\infty\over 2}\int_{\omega\times Z}|\mathbb{D}_z[v]|^2dx'dz.
$$
Observe that $J_r$ is convex and Gateaux differentiable on $L^q(\omega; W^{1,q}_{\#}(Z)^3)$ with  $q=\max\{2,r\}$, (see \cite[Proposition 2.1 and Section 3]{Baranger} for more details) and $A_r=J'_r$ is given by
$$
(A_r(w),v)=(\eta_0-\eta_\infty)\int_{\omega\times Z}(1+\lambda|\mathbb{D}_z[w]|^2)^{{r\over 2}-1}\mathbb{D}_z[w]:\mathbb{D}_z[v]dx'dz+\eta_\infty\int_{\omega\times Z}\mathbb{D}_z[w]:\mathbb{D}_z[v]dx'dz.
$$
Applying \cite[Proposition 1.1., p.158]{Lions2}, $A_r$ is monotone, \emph{i.e.}
\begin{equation}\label{monotonicity}
(A_r(w)-A_r(v),w-v)\ge0,\quad \forall w,v\in L^q(\omega; W^{1,q}_{\#}(Z)^3).
\end{equation}
On the other hand, for all $\varphi\in \mathcal{D}(\omega; C^\infty_{\#}(Z)^3)$ with $\varphi=0$ in $\omega\times T$ and on $\omega\times Z'\times \{0, 1\}$, satisfying the divergence conditions ${\rm div}_{x'}\int_Z \varphi'\,dz=0$ in $\omega$, $\int_{Z_f}\varphi(x',z)\,dz \cdot n=0$ on $\partial\omega$, and ${\rm div}_{z}\varphi=0$ in $\omega\times Z$,  we choose $v_\epsilon$ defined by
$$
v_\epsilon=\varphi-\epsilon^{-1}\hat u_\epsilon,
$$
as a test function in (\ref{form_var_hat}). 

Taking into account that ${\rm div}_{\epsilon}\tilde u_\epsilon=0$, we get that $\epsilon^{-1}{\rm div}_{\color{darkgreen}z} \hat u_\epsilon=0$, and then we obtain
\begin{equation*}\begin{array}{l}
\displaystyle
\medskip
\epsilon^{\gamma-1}(A_r(\epsilon^{-1}\hat u_\epsilon),v_\epsilon)-\int_{\omega\times Z}\hat P_\epsilon\, {\rm div}_{x'}v'_\epsilon\,dx'dz=\int_{\omega\times Z} f'\cdot v'_\epsilon\,dx'dz+O_\epsilon,
\end{array}
\end{equation*}
which is equivalent to
\begin{equation*}\begin{array}{l}
\displaystyle
\medskip
\epsilon^{\gamma-1}(A_r(\varphi)-A_r(\epsilon^{-1}\hat u_\epsilon),v_\epsilon)-\epsilon^{\gamma-1}(A_r(\varphi),v_\epsilon)+\int_{\omega\times Z}\hat P_\epsilon\, {\rm div}_{x'}v'_\epsilon\,dx'dz=-\int_{\omega\times Z} f'\cdot v'_\epsilon\,dx'dz+O_\epsilon.
\end{array}
\end{equation*}
Due to (\ref{monotonicity}), we can deduce
\begin{equation*}\begin{array}{l}
\displaystyle
\medskip
\epsilon^{\gamma-1}(A_r(\varphi),v_\epsilon)-\int_{\omega\times Z}\hat P_\epsilon\, {\rm div}_{x'}v'_\epsilon\,dx'dz\ge\int_{\omega\times Z} f'\cdot v'_\epsilon\,dx'dz+O_\epsilon,
\end{array}
\end{equation*}
\emph{i.e.}
\begin{equation}\label{v_ineq_carreau}\begin{array}{l}
\displaystyle
\medskip
\epsilon^{\gamma-1}(\eta_0-\eta_\infty)\int_{\omega\times Z}(1+\lambda|\mathbb{D}_z[\varphi]|^2)^{{r\over 2}-1}\mathbb{D}_z[\varphi]:\mathbb{D}_z[v_\epsilon]dx'dz+\epsilon^{\gamma-1}\eta_\infty\int_{\omega\times Z}\mathbb{D}_z[\varphi]:\mathbb{D}_z[v_\epsilon]dx'dz\\
\medskip
\displaystyle-\int_{\omega\times Z}\hat P_\epsilon\, {\rm div}_{x'}v'_\epsilon\,dx'dz\ge\int_{\omega\times Z} f'\cdot v'_\epsilon\,dx'dz+O_\epsilon.
\end{array}
\end{equation}
In the above inequality, in case $1<r\leq 2$, $|O_\epsilon|\leq C\epsilon^\alpha$, with $\alpha=1$ if  $\gamma\leq 1$ and $\alpha=2-\gamma$ if $\gamma>1$. In case $r>2$, $|O_\epsilon|\leq C\epsilon^\alpha$, with $\alpha=\gamma+{2\over r}(1-\gamma)$ if $\gamma\leq 1$ and $\alpha=1-{\gamma-1\over r-1}$ if $\gamma>1$.

\begin{proof}[Proof of Theorem \ref{mainthmPseudo}]

We recall that $1<r<2$. The case $\gamma=1$ is developed in \cite{Carreau_Ang_Bonn_SG}, so we omit it and consider that $\gamma\neq 1$. The proof will be divided in two steps. In the first step, we obtain the homogenized behaviour  given by a coupled system, with a constant macroviscosity, and in the second step we decouple it to obtain the macroscopic law. \\

{\it Step 1.} Using Lemmas \ref{lemma_compactness} and \ref{lemma_compactness_p},  in this step we will prove that the sequence $(\epsilon^{\gamma-2}\hat u_\epsilon, \tilde P_\epsilon)$ converges to $(\hat u, \tilde P)\in L^2(\omega;H^1_{\#}(Z_f)^3)\times (L^2_0(\omega)\cap H^1(\omega))$, characterized as the unique solutions of  the following two-pressures Newtonian Stokes problem with the linear viscosity $\eta$ equal to $\eta_0$ if $\gamma<1$ and $\eta_\infty$ if $\gamma>1$:
\begin{equation}\label{prueba_lim_1}
\left\{\begin{array}{rl}
\medskip
-\eta\,{\rm div}_{\color{darkgreen}z}\mathbb{D}_z[\hat u] + \nabla_z\hat \pi= f' -\nabla_{x'} \tilde P & \hbox{in}\quad\omega\times Z_f, \\
\medskip\displaystyle
{\rm div}_z \hat u =0& \hbox{in}\quad\omega\times Z_f, \\
\medskip\displaystyle
{\rm div}_{x'}\left(\int_{Z_f}\hat u'\,dz\right)=0&\hbox{in }\omega, \\
\medskip\displaystyle
\left(\int_{Z_f}\hat u'\,dz\right)\cdot n=0&\hbox{on }\partial\omega, \\
\medskip\displaystyle
\hat u=0& \hbox{in }\omega\times T,
\\
\medskip\displaystyle
\hat u=0& \hbox{in }\omega\times Z'\times \{0, 1\},\\
\medskip\displaystyle
\hat \pi\in  L^2(\omega;L^2_{0,\#}(Z_f)).
\end{array}\right.
\end{equation}
Divergence conditions (\ref{prueba_lim_1})$_{2,3,4}$ and condition (\ref{prueba_lim_1})$_{5,6}$ follow from Lemma \ref{lemma_compactness}. To prove that $(\hat u, \tilde P)$ satisfies the momentum equation given in (\ref{prueba_lim_1}), 
we follow the lines of the proof to obtain (\ref{v_ineq_carreau}) but choose now $v_\epsilon$ and $\varphi$ such that $v_\epsilon=\epsilon^{1-\gamma}\varphi-\epsilon^{-1}\hat u_\epsilon$  with $\varphi\in \mathcal{D}(\omega; C^\infty_{\#}(Z)^3)$ satisfying $\varphi=0$ in $\omega\times T$ and on $\omega \times Z'\times \{0, 1\}$,   ${\rm div}_{x'}\int_Z \varphi'\,dz=0$ in $\omega$, $\int_Z \varphi'\,dz\cdot n=0$ on $\partial \omega$, and ${\rm div}_{z}\varphi=0$ in $\omega\times Z$.  Then, we get
\begin{equation}\label{v_ineq_carreau_1}\begin{array}{l}
\displaystyle
\medskip
\epsilon^{1-\gamma}(\eta_0-\eta_\infty)\int_{\omega\times Z}(1+\lambda\epsilon^{2(1-\gamma)}|\mathbb{D}_z[\varphi]|^2)^{{r\over 2}-1}\mathbb{D}_z[\varphi]:\mathbb{D}_z[\varphi-\epsilon^{\gamma-2}  \hat u_\epsilon]\,dx'dz\\
\medskip
\displaystyle
+\epsilon^{1-\gamma}\eta_\infty\int_{\omega\times Z}\mathbb{D}_z[\varphi]:\mathbb{D}_z[\varphi-\epsilon^{\gamma-2}  \hat u_\epsilon]dx'dz\\
\medskip
\displaystyle-\epsilon^{1-\gamma}\int_{\omega\times Z}\hat P_\epsilon\, {\rm div}_{x'}(\varphi'-\epsilon^{\gamma-2}  \hat u'_\epsilon)\,dx'dz\ge \epsilon^{1-\gamma}\int_{\omega\times Z} f'\cdot (\varphi'-\epsilon^{\gamma-2}  \hat u'_\epsilon)\,dx'dz+O_\epsilon,
\end{array}
\end{equation}
where $|O_\epsilon|\leq C\epsilon^{2-\gamma}$. Dividing by $\epsilon^{1-\gamma}$, we deduce
\begin{equation}\label{v_ineq_carreau_12}\begin{array}{l}
\displaystyle
\medskip
 (\eta_0-\eta_\infty)\int_{\omega\times Z}(1+\lambda\epsilon^{2(1-\gamma)}|\mathbb{D}_z[\varphi]|^2)^{{r\over 2}-1}\mathbb{D}_z[\varphi]:\mathbb{D}_z[\varphi-\epsilon^{\gamma-2}  \hat u_\epsilon]\,dx'dz\\
\medskip
\displaystyle
+\ \eta_\infty\int_{\omega\times Z}\mathbb{D}_z[\varphi]:\mathbb{D}_z[\varphi-\epsilon^{\gamma-2}  \hat u_\epsilon]dx'dz\\
\medskip
\displaystyle - \int_{\omega\times Z}\hat P_\epsilon\, {\rm div}_{x'}(\varphi'-\epsilon^{\gamma-2}  \hat u'_\epsilon)\,dx'dz \ge  \int_{\omega\times Z} f'\cdot (\varphi'-\epsilon^{\gamma-2}  \hat u'_\epsilon)\,dx'dz+O_\epsilon,
\end{array}
\end{equation}
where $|O_\epsilon|\leq C\epsilon$, which tends to zero when $\epsilon\to 0$. Now, we can pass to the limit depending on the value of $\gamma$:
\begin{itemize}
\item[-] If $\gamma<1$, then $2(1-\gamma)>0$ and so, $\lambda\epsilon^{2(1-\gamma)}|\mathbb{D}_z[\varphi]|^2$ tends to zero. From convergence (\ref{conv_vel_gorro}), passing to the limit when $\epsilon$ tends to zero in (\ref{v_ineq_carreau_12}), we have that the first and second terms converge to 
$$\eta_0\int_{ \omega\times Z}\mathbb{D}_{z}[\varphi] : \mathbb{D}_{z}[ \varphi- \hat u ]\,dx'dz.$$

From convergences (\ref{conv_vel_gorro}) and (\ref{conv_pres_hat}), the third term converges to 
$$\int_{\omega\times Z}\tilde P\, {\rm div}_{x'}(\varphi'- \hat u' )\,dx'dz.$$
Since $\tilde P$ does not depend on $z$, by the divergence conditions ${\rm div}_{x'}\int_Z \varphi'\,dz=0$ and (\ref{divyproperty})$_2$,
$$\int_{\omega\times Z}\tilde P\, {\rm div}_{x'}(\varphi'- \hat u' )\,dx'dz=\int_{\omega}\tilde P\, {\rm div}_{x'}\left(\int_Z(\varphi'- \hat u' )dz\right)\,dx'=0.$$
Thus, passing to the limit in the variational inequality (\ref{v_ineq_carreau_12}) yields
$$\begin{array}{l}
\displaystyle
\medskip
\eta_0\int_{ \omega\times Z}\mathbb{D}_{z}[\varphi] : \mathbb{D}_{z}[ \varphi-  \hat u]\,dx'dz\geq \int_{\omega\times Z} f'\cdot (\varphi'-\hat u')\,dx'dz.
\end{array}
$$
Since $\varphi$ is arbitrary, by Minty's lemma, see \cite[Chapter 3, Lemma 1.2]{Lions2}, we deduce 
\[
\eta_0\int_{ \omega\times Z}\mathbb{D}_{z}[\hat u] : \mathbb{D}_{z}[ \varphi]\,dx'dz= \int_{\omega\times Z} f'\cdot \varphi'\,dx'dz\, .
\]

\item[-] If $\gamma>1$, then $2(1-\gamma)<0$ and so, $(1+\lambda\epsilon^{2(1-\gamma)}|\mathbb{D}_z[\varphi]|^2)^{{r\over 2}-1}$ tends to zero. From convergence (\ref{conv_vel_gorro}), passing to the limit when $\epsilon$ tends to zero in (\ref{v_ineq_carreau_12}), we have that the first and second terms converge to 
$$\eta_\infty\int_{ \omega\times Z}\mathbb{D}_{z}[\varphi] : \mathbb{D}_{z}[ \varphi- \hat u ]\,dx'dz.$$
Treating the third integral term exactly as above, we conclude that the following equality holds true:
\[
\eta_\infty\int_{ \omega\times Z}\mathbb{D}_{z}[\hat u] : \mathbb{D}_{z}[ \varphi]\,dx'dz= \int_{\omega\times Z} f'\cdot \varphi'\,dx'dz\, .
\]

\end{itemize}

\noindent In summary,  considering  $\eta$ equal  to $\eta_0$ if $\gamma<1$ or $\eta_\infty$ if $\gamma>1$,  we have obtained that by density, the equality
\begin{equation}\label{form_var_hat2}\begin{array}{l}
\displaystyle
\medskip
\eta\int_{ \omega\times Z}\mathbb{D}_{z}[\hat u] : \mathbb{D}_{z}[ v]\,dx'dz= \int_{\omega\times Z} f'\cdot v'\,dx'dz
\end{array}
\end{equation}
 is satisfied by every $v$ in the Hilbert space $\mathcal{V}$ defined by
\begin{equation}\label{Vspace}
\mathcal{V}=\left\{\begin{array}{l}
\medskip
v(x',z)\in L^2(\omega;H^1_{\#}(Z)^3) \hbox{ such that }\\
\medskip
\displaystyle {\rm div}_{x'}\left(\int_{Z_f}v(x',z)\,dz \right)=0\hbox{ in }\omega,\quad \left(\int_{Z_f}v(x',z)\,dz \right)\cdot n=0\hbox{ on }\partial\omega\\
\medskip
{\rm div}_zv(x',z)=0\hbox{ in }\omega\times Z_f,\quad v(x',z)=0\hbox{ in }\omega\times T \hbox{ and on } \omega\times Z'\times \{0, 1\}
\end{array}\right\}\,.
\end{equation}
Reasoning as in \cite[Lemma 1.5]{Allaire0}, the orthogonal of $\mathcal{V}$, a subset of $L^2(\omega;H^{-1}_{\#}(Z)^3)$,  is made of gradients of the form 
 $\nabla_{x'}\tilde \pi(x')+\nabla_z \hat \pi(x',y)$, with $\tilde \pi(x')\in H^1(\omega)/\mathbb{R}$ and $\hat \pi(x',z)\in L^2(\omega;L^2_{\#}(Z_f)/\mathbb{R})$.  Thus, integrating by parts, the variational formulation (\ref{form_var_hat2})  is equivalent to the two-pressures  Newtonian Stokes problem (\ref{prueba_lim_1}).  It remains to prove that  $\tilde \pi$ coincides with pressure $\tilde P$, which can be easily done by passing to the limit similarly as above, considering a test function $\varphi$ that is divergence-free only in ${\color{darkgreen}z}$, and identifying limits. Hence, $\tilde P\in L^2_0(\omega)\cap H^1(\omega)$. Last, from \cite{Allaire0}, problem (\ref{prueba_lim_1}) admits a unique solution $(\hat u, \hat \pi, \tilde P)\in L^2(\omega;H^1_{\#}(Z_f)^3)\times L^2(\omega;L^2_{0,\#}(Z_f))\times (L^2_0(\omega)\cap H^1(\omega))$, which implies that the entire sequence $(\hat u_\epsilon, \hat P_\epsilon)$ converges to  $(\hat  u, \tilde P)$.\\

 {\it Step 2.} To prove (\ref{thm:system}), it remains to eliminate the microscopic variable ${\color{darkgreen}z}$ in the effective problem (\ref{prueba_lim_1}).   The procedure is rather standard and is detailed for instance in \cite{Anguiano_SuarezGrau2}, but for the reader's convenience, we give some details on the proof.
 From the first equation of (\ref{prueba_lim_1}), the velocity $\hat u$ and pressure $\hat \pi$ can be expressed in terms of the macroscopic force and pressure gradient, and local velocity/pressure $w^i,\pi^i$ (defined by~\eqref{LocalProblemNewtonian}), as
$$
\hat u(x',z)={1\over \eta}\sum_{i=1}^2\left(f_i(x')-\partial_{x_i}\tilde P(x')\right) w^i(z),\quad  \hat \pi(x',z)=\sum_{i=1}^2\left(f_i(x')-\partial_{x_i}\tilde P(x')\right) \pi^i(z).$$ 
Integrating the expression of $\hat u$ on $Z$ and taking into account that $\tilde V(x')=\int_0^1\tilde u(x',z_3)dz_3=\int_{Z_f}\hat u(x',z)dz$ and $\int_{Z_f}\hat u_3\,dz=0$, we get Darcy's law (\ref{thm:system})$_1$ since the matrix $\mathcal{A}\in \mathbb{R}^{2\times 2}$ satisfies (\ref{permfuncNew}). Combining the expression of $\tilde V$ with the divergence-free condition on $\tilde V$ given by (\ref{prueba_lim_1})$_3$ yields the lower-dimensional homogenized Darcy's law (\ref{thm:system})$_2$.

\end{proof}

\begin{proof}[Proof of Theorem \ref{mainthmDilatant}.]
We recall that in this case $r>2$. The proof will be divided in four steps. In the first step, we obtain the homogenized behaviour in case $\gamma<1$. Second step derives the homogenized behaviour for $\gamma>1$ given by a coupled system with a non-linear macroviscosity of power law type, which will be decoupled to obtain the macroscopic law in the third step. Finally, in the fourth step, we consider the case $\gamma=1$.\\

{\it Step 1.}  Take $\gamma<1$. 
  From Lemmas \ref{lemma_compactness} and \ref{lemma_compactness_p},  first we will prove that the sequence $(\epsilon^{\gamma-2}\hat u_\epsilon, \tilde P_\epsilon)$ converges to $(\hat u, \tilde P)\in L^2(\omega;H^1_{\#}(Z_f)^3)\times (L^{r'}_0(\omega)\cap W^{1,r'}(\omega))$, which will be the unique solutions of  the following two-pressures Newtonian Stokes problem with the linear viscosity $\eta$ equal to $\eta_0$ :
\begin{equation}\label{prueba_lim_1rgreater2}
\left\{\begin{array}{rl}
\medskip
-\eta_0\,{\rm div}_z\mathbb{D}_z[\hat u] + \nabla_z\hat \pi= f' -\nabla_{x'} \tilde P & \hbox{in}\quad\omega\times Z_f, \\
\medskip\displaystyle
{\rm div}_z \hat u =0& \hbox{in}\quad\omega\times Z_f, \\
\medskip\displaystyle
{\rm div}_{x'}\left(\int_{Z_f}\hat u'\,dz\right)=0&\hbox{in }\omega, \\
\medskip\displaystyle
\left(\int_{Z_f}\hat u'\,dz\right)\cdot n=0&\hbox{on }\partial\omega, \\
\medskip\displaystyle
\hat u=0& \hbox{in }\omega\times T,\\
\medskip\displaystyle
\hat u=0 &\hbox{ on }\omega\times Z'\times \{0, 1\},
\\
\medskip
\hat \pi\in  L^{2}(\omega;L^{2}_{0,\#}(Z_f)).
\end{array}\right.
\end{equation}
The proof of (\ref{prueba_lim_1rgreater2}) is similar to {\it Step 1} of the proof of Theorem \ref{mainthmPseudo}. We give the main steps:
\begin{itemize}
\item[-] We  deduce the variational inequality 
\begin{equation}\label{v_ineq_carreau_12rgreater2}\begin{array}{l}
\displaystyle
\medskip
 (\eta_0-\eta_\infty)\int_{\omega\times Z}(1+\lambda\epsilon^{2(1-\gamma)}|\mathbb{D}_z[\varphi]|^2)^{{r\over 2}-1}\mathbb{D}_z[\varphi]:\mathbb{D}_z[\varphi-\epsilon^{\gamma-2}  \hat u_\epsilon]\,dx'dz\\
\medskip
\displaystyle
+\ \eta_\infty\int_{\omega\times Z}\mathbb{D}_z[\varphi]:\mathbb{D}_z[\varphi-\epsilon^{\gamma-2}  \hat u_\epsilon]dx'dz\\
\medskip
\displaystyle- \int_{\omega\times Z}\hat P_\epsilon\, {\rm div}_{x'}(\varphi'-\epsilon^{\gamma-2}  \hat u'_\epsilon)\,dx'dz\ge  \int_{\omega\times Z} f'\cdot (\varphi'-\epsilon^{\gamma-2}  \hat u'_\epsilon)\,dx'dz+O_\epsilon,
\end{array}
\end{equation}
for  $\varphi\in \mathcal{D}(\omega; C^\infty_{\#}(Z)^3)$ with $\varphi=0$ in $\omega\times T$ and on $\omega\times Z'\times \{0, 1\}$, satisfying the divergence conditions ${\rm div}_{x'}\int_Z \varphi'\,dz=0$ in $\omega$, $\int_Z \varphi'\,dz\cdot n=0$ on $\partial \omega$, and ${\rm div}_{z}\varphi=0$ in $\omega\times Z$, where $|O_\epsilon|\leq C\epsilon$, which tends to zero when $\epsilon\to 0$. 

\item[-] Next, we can pass to the limit in every term of (\ref{v_ineq_carreau_12rgreater2}) when $\epsilon$ tends to zero. The rest of the proof is similar to the one given in {\it Step 1}  of the proof of Theorem \ref{mainthmPseudo} but taking into account convergences (\ref{conv_vel_gorro2}) and (\ref{conv_pres_hat2}).  We notice that since $\gamma<1$, $\lambda\epsilon^{2(1-\gamma)}$ tends to zero and thus,  since $r>2$, $(1+\lambda\epsilon^{2(1-\gamma)}|\mathbb{D}_y[\varphi]|^2)^{{r\over 2}-1}$ tends to $1$. As a result, the first and second terms of (\ref{v_ineq_carreau_12rgreater2}) converge to 
$$\eta_0\int_{ \omega\times Z}\mathbb{D}_{z}[\varphi] : \mathbb{D}_{z}[ \varphi- \hat u ]\,dx'dz.$$
\item[-] We conclude that $(\hat u, \tilde P)$ satisfies the variational formulation
$$\begin{array}{l}
\displaystyle
\medskip
\eta_0\int_{ \omega\times Z}\mathbb{D}_{z}[\hat u] : \mathbb{D}_{z}[ v]\,dx'dz= \int_{\omega\times Z} f'\cdot v'\,dx'dz,
\end{array}
$$
 for every function $v$ in the Hilbert space $\mathcal{V}$ defined by (\ref{Vspace}). This variational formulation is equivalent to problem (\ref{prueba_lim_1rgreater2}).
\end{itemize}
Finally, following {\it Step 2} from the proof of Theorem \ref{mainthmPseudo}, we deduce the lower-dimensional homogenized Darcy's law (\ref{thm:system}) with  $\eta=\eta_0$. By definiteness of the matrix $\mathcal{A}$, system (\ref{thm:system})  has a unique solution $\tilde V'\in L^2(\omega)^2$, $\tilde P\in H^1(\omega)\cap L^2_0(\omega)$. Therefore, the limits do not depend on the subsequences.    \\

{\it Step 2}. Take $\gamma>1$. From Lemmas \ref{lemma_compactness} and \ref{lemma_compactness_p},  we prove that the sequence $(\hat u_\epsilon, \tilde P_\epsilon)$ converges to $(\hat u, \tilde P)\in L^r(\omega;W^{1,r}_{\#}(Z_f)^3)\times (L^{r'}_0(\omega)\cap W^{1,r'}(\omega))$, which are the unique solutions of  the following two-pressures non-Newtonian Stokes problem with a non-linear power law viscosity:
\begin{equation}\label{prueba_lim_1_power}
\left\{\begin{array}{rl}
\medskip
-(\eta_0-\eta_\infty)\lambda^{r-2\over 2}\,{\rm div}_z(|\mathbb{D}_z[\hat u]|^{r-2}\mathbb{D}_z[\hat u]) + \nabla_z\hat \pi= f' -\nabla_{x'} \tilde P & \hbox{in}\quad\omega\times Z_f, \\
\medskip\displaystyle
{\rm div}_z \hat u =0& \hbox{in}\quad\omega\times Z_f, \\
\medskip\displaystyle
{\rm div}_{x'}\left(\int_{Z_f}\hat u'\,dz\right)=0&\hbox{in }\omega, \\
\medskip\displaystyle
\left(\int_{Z_f}\hat u'\,dz\right)\cdot n=0&\hbox{on }\partial\omega, \\
\medskip\displaystyle
\hat u=0& \hbox{in }\omega\times T,\\
\medskip\displaystyle
\hat u=0 & \hbox{on }\omega\times Z'\times \{0, 1\},
\\
\medskip
\hat \pi\in  L^{r'}(\omega;L^{r'}_{0,\#}(Z_f)).
\end{array}\right.
\end{equation}
Divergence conditions (\ref{prueba_lim_1_power})$_{2,3,4}$ and condition (\ref{prueba_lim_1_power})$_5$ follow from Lemma \ref{lemma_compactness}. To prove that $(\hat u, \tilde P)$ satisfies the momentum equation given in (\ref{prueba_lim_1_power}), we follow the lines of the proof of (\ref{v_ineq_carreau}) but considering now $v_\epsilon:=\epsilon^{1-\gamma\over r-1}\varphi-\epsilon^{-1}\hat u_\epsilon$, where  $\varphi\in \mathcal{D}(\omega; C^\infty_{\#}(Z)^3)$ ssatisfies $\varphi=0$ in $\omega\times T$ and on $\omega\times Z'\times \{0, 1\}$, the divergence conditions ${\rm div}_{x'}\int_Z \varphi'\,dz=0$ in $\omega$, $\int_Z \varphi'\,dz\cdot n=0$ on $\partial\omega$, and ${\rm div}_{z}\varphi=0$ in $\omega\times Z$. We get
\begin{equation}\label{v_ineq_carreau_1_r22}\begin{array}{l}
\displaystyle
\medskip
\epsilon^{\gamma-1 + 2{1-\gamma\over r-1}}(\eta_0-\eta_\infty)\int_{\omega\times Z}(1+\lambda\epsilon^{2{1-\gamma\over r-1}}|\mathbb{D}_z[\varphi]|^2)^{{r\over 2}-1}\mathbb{D}_z[\varphi]:\mathbb{D}_z[\varphi-\epsilon^{\gamma-r\over r-1} \hat u_\epsilon]\,dx'dz\\
\medskip
\displaystyle
+\epsilon^{\gamma-1 + 2{1-\gamma\over r-1}}\eta_\infty\int_{\omega\times Z}\mathbb{D}_z[\varphi]:\mathbb{D}_z[\varphi-\epsilon^{\gamma-r\over r-1}\hat u_\epsilon]\,dx'dz\\
\medskip
\displaystyle-\epsilon^{1-\gamma\over r-1}\int_{\omega\times Z}\hat P_\epsilon\, {\rm div}_{x'}(\varphi'-\epsilon^{\gamma-r\over r-1} \hat u'_\epsilon)\,dx'dz\ge \epsilon^{1-\gamma\over r-1}\int_{\omega\times Z} f'\cdot (\varphi'-\epsilon^{\gamma-r\over r-1} \hat u'_\epsilon)\,dx'dz+O_\epsilon,
\end{array}
\end{equation}
where $|O_\epsilon|\leq C\epsilon^{1+{1-\gamma\over r-1}}$. Dividing by $\epsilon^{1-\gamma\over r-1}$, we deduce the inequality
\begin{equation}\label{v_ineq_carreau_1_r22}\begin{array}{l}
\displaystyle
\medskip
\epsilon^{(\gamma-1){r-2\over r-1}}(\eta_0-\eta_\infty)\int_{\omega\times Z}(1+\lambda\epsilon^{2{1-\gamma\over r-1}}|\mathbb{D}_z[\varphi]|^2)^{{r\over 2}-1}\mathbb{D}_z[\varphi]:\mathbb{D}_z[\varphi-\epsilon^{\gamma-r\over r-1} \hat u_\epsilon]\,dx'dz\\
\medskip
\displaystyle
+\epsilon^{(\gamma-1){r-2\over r-1}}\eta_\infty\int_{\omega\times Z}\mathbb{D}_z[\varphi]:\mathbb{D}_z[\varphi-\epsilon^{\gamma-r\over r-1}\hat u_\epsilon]\,dx'dz\\
\medskip
\displaystyle-\int_{\omega\times Z}\hat P_\epsilon\, {\rm div}_{x'}(\varphi'-\epsilon^{\gamma-r\over r-1} \hat u'_\epsilon)\,dx'dz\ge \int_{\omega\times Z} f'\cdot (\varphi'-\epsilon^{\gamma-r\over r-1} \hat u'_\epsilon)\,dx'dz+O_\epsilon,
\end{array}
\end{equation}
with $|O_\epsilon|\leq C\epsilon$.
\\

Since $\gamma>1$ and $r>2$, we have $(\gamma-1){r-2\over r-1}>0$, and from
$$
\epsilon^{(\gamma-1){r-2\over r-1}}(1+\lambda\epsilon^{2{1-\gamma\over r-1}}|\mathbb{D}_z[\varphi]|^2)^{{r\over 2}-1}=(\epsilon^{2{\gamma-1\over r-1}}+ \lambda|\mathbb{D}_z[\varphi]|^2)^{{r\over 2}-1}
$$
and convergences (\ref{conv_vel_gorro22}) and (\ref{conv_pres_hat2}), passing to the limit in (\ref{v_ineq_carreau_1_r22}) when $\epsilon$ tends to zero, we deduce 
\begin{equation}\label{v_ineq_carreau_1_r222}\begin{array}{l}
\displaystyle
\medskip
(\eta_0-\eta_\infty)\lambda^{r-2\over 2}\int_{\omega\times Z}|\mathbb{D}_z[\varphi]|^{r-2}\mathbb{D}_z[\varphi]:\mathbb{D}_z[\varphi-\hat u ]\,dx'dz\\
\medskip
\displaystyle-\int_{\omega\times Z}\tilde P\, {\rm div}_{x'}(\varphi'-  \hat u' )\,dx'dz\ge \int_{\omega\times Z} f'\cdot (\varphi'-  \hat u' )\,dx'dz.
\end{array}
\end{equation}
Since $\tilde P$ does not depend on $\color{darkgreen} z$, by the divergences conditions ${\rm div}_{x'}\int_Z \varphi'\,dz=0$ and (\ref{prueba_lim_1_power})$_3$, one has 
$$\int_{\omega\times Z}\tilde P\, {\rm div}_{x'}(\varphi'- \hat u' )\,dx'dz=\int_{\omega}\tilde P\, {\rm div}_{x'}\left(\int_Z(\varphi'- \hat u' )dz\right)\,dx'=0.$$
Hence, the variational inequality (\ref{v_ineq_carreau_1_r222}) reads
$$\begin{array}{l}
\displaystyle
\medskip
(\eta_0-\eta_\infty)\lambda^{r-2\over 2}\int_{\omega\times Z}|\mathbb{D}_z[\varphi]|^{r-2}\mathbb{D}_z[\varphi]:\mathbb{D}_z[\varphi-\hat u ]\,dx'dz\geq \int_{\omega\times Z} f'\cdot (\varphi'-\hat u')\,dx'dz.
\end{array}
$$
Using Minty's lemma~\cite[Chapter 3, Lemma 1.2]{Lions2} and a density argument, we conclude that the equality 
\begin{equation}
\label{form_var_hat2_power}
\begin{array}{l}
\displaystyle
\medskip
(\eta_0-\eta_\infty)\lambda^{r-2\over 2}\int_{ \omega\times Z}|\mathbb{D}_{z}[\hat u]|^{r-2}\mathbb{D}_{z}[\hat u] : \mathbb{D}_{z}[ v]\,dx'dz= \int_{\omega\times Z} f'\cdot v'\,dx'dz\, 
\end{array}
\end{equation}
is valid for every $v$ in the Banach space $\mathcal{V}$ defined by
$$
\mathcal{V}=\left\{\begin{array}{l}
\medskip
v(x',z)\in L^r(\omega;W^{1,r}_{\#}(Z)^3) \hbox{ such that }\\
\medskip
\displaystyle {\rm div}_{x'}\left(\int_{Z_f}v(x',z)\,dz \right)=0\hbox{ in }\omega,\quad \left(\int_{Z_f}v(x',z)\,dz \right)\cdot n=0\hbox{ on }\partial\omega\\
\medskip
{\rm div}_zv(x',z)=0\hbox{ in }\omega\times Z_f,\quad v(x',z)=0\hbox{ in }\omega\times T \hbox{ and on }\omega\times Z'\times \{0, 1\}
\end{array}\right\}\,.
$$

Reasoning as in \cite[Lemma 1.5]{Allaire0}, the orthogonal of $\mathcal{V}$, a subset of $L^r(\omega;W^{-1,r'}_{\#}(Z)^3)$,  is made of gradients of the form 
 $\nabla_{x'}\tilde \pi(x')+\nabla_z \hat \pi(x',y)$, with $\tilde \pi(x')\in W^{1,r'}(\omega)/\mathbb{R}$ and $\hat \pi(x',z)\in L^{r'}(\omega;L^{r'}_{\#}(Z_f)/\mathbb{R})$.  Thus, integra\-ting by parts, the variational formulation (\ref{form_var_hat2_power})  is equivalent to the two-pressures non-Newtonian Stokes problem (\ref{prueba_lim_1_power}).    The identification of $\tilde \pi$ with $\tilde P$ is then performed analogously as in the proof of Theorem~\ref{mainthmPseudo}, and in particular $\tilde P\in L^{r'}_0(\omega)\cap W^{1,r'}(\omega)$. From \cite[Theorem 2]{Bourgeat1}, problem (\ref{prueba_lim_1_power}) admits a unique solution $(\hat u, \hat \pi, \tilde P)\in L^r(\omega;W^{1,r}_{\#}(Z_f)^3)\times L^{r'}(\omega;L^{r'}_{0,\#}(Z_f))\times (L^{r'}_0(\omega)\cap W^{1,r'}(\omega))$, hence the entire sequence $(\hat u_\epsilon, \hat P_\epsilon)$ converges to  $(\hat  u, \tilde P)$.\\
 
 {\it Step 3.} In this step we  give an approximation of the model (\ref{prueba_lim_1_power}), where the macroscopic scale is totally decoupled from the microscopic one. To do this, we  seek a global filtration velocity of the form given in (\ref{thm:system_22}), \emph{i.e.} 
\begin{equation}\label{filtration}
 \tilde V(x')=\frac{1}{\lambda^{\frac{2-r'}{2}}(\eta_0-\eta_\infty)^{r'-1}}\mathcal{U} \big(f'(x')-\nabla_{x'}\tilde P(x') \big)\quad\hbox{in }\omega,
\end{equation}
where $\mathcal{U}:\mathbb{R}^2\to\mathbb{R}^3$ is a permeability operator, not necessary linear, and $\tilde V(x')=\int_0^1\tilde u(x',z_3)\,dz_3=\int_{Z}\hat u(x',z)\,dz$  with ${\rm div}_{x'}\tilde V'=0$ in $\omega$ and $\tilde V'\cdot n=0$ on $\partial \omega$.

Using the idea from \cite{Bourgeat2} to decouple the homogenized problems of {\it power law} type, for every $\xi'\in\mathbb{R}^2$ we consider the function $\mathcal{U}:\mathbb{R}^2\to \mathbb{R}^3$ given by
$$\mathcal{U}(\xi')=\int_{Z_f}w_{\xi'}(z)\,dz,$$
where $w_{\xi'}$ denotes the unique solution of the local Stokes problem given by (\ref{LocalProblemNonNewtonian}), see \cite[Theorem 2]{Bourgeat1}.
Thus,  $(\hat u,\hat \pi)$ takes the form
\[
\hat u(x',z)=\frac{1}{\lambda^{\frac{2-r'}{2}}(\eta_0-\eta_\infty)^{r'-1}}w_{f'(x')-\nabla_{x'}\tilde P(x')}(z),\quad \hat \pi(x',z)=\pi_{f'(x')-\nabla_{x'}\tilde P(x')}(z)\quad\hbox{in }\omega\times Z\, .\]
Then, from the relation $\tilde V(x')=\int_{Z}\hat u(x',y)\,dz$ with $\int_Z\hat u_3(x',z)\,dz=0$ given in Lemma \ref{lemma_compactness}, we deduce the filtration velocity (\ref{filtration}), where $\tilde V_3=0$.  Moreover, from second and third conditions given in  (\ref{prueba_lim_1_power})  together with (\ref{filtration}), we deduce 
$${\rm div}_{x'} \tilde V'=0\quad\hbox{in }\omega,\quad \tilde V'\cdot n=0\quad\hbox{on }\partial\omega.$$

Since $\tilde V_3=0$, to simplify the notation, we redefine $\mathcal{U}$  by the expression given in (\ref{permfunc}) and then, we get $\mathcal{U}:\mathbb{R}^2\to \mathbb{R}^2$, which concludes the proof of (\ref{thm:system_22}). Finally, from \cite[Theorem 1]{Bourgeat2}, the macroscopic problem (\ref{thm:system_22}) has a unique solution $(V, \tilde P)\in L^r(\omega)^3\times (L^{r'}_0(\omega)\cap W^{1,r}(\omega))$ and Theorem \ref{mainthmDilatant} is proved.\\

{\it Step 4}. We consider the case $\gamma=1$. Reasoning as in {\it Step 2} with $\gamma=1$ and using convergences (\ref{conv_vel_gorro22_1}) and (\ref{conv_pres_hat2}), we deduce that  the sequence $(\hat u_\epsilon, \tilde P_\epsilon)$ converges to $(\hat u, \tilde P)\in L^r(\omega;W^{1,r}_{\#}(Z_f)^3)\times (L^{r'}_0(\omega)\cap W^{1,r'}(\omega))$, which are the unique solutions of  the following two-pressures non-Newtonian Stokes problem with non-linear Carreau viscosity (\ref{Carreau})
$$
\left\{\begin{array}{rl}
\medskip
-{\rm div}_y(\eta_r(\mathbb{D}_z[\hat u])\mathbb{D}_z[\hat u]) + \nabla_z\hat \pi= f' -\nabla_{x'} \tilde P & \hbox{in}\quad\omega\times Z_f, \\
\medskip\displaystyle
{\rm div}_z \hat u =0& \hbox{in}\quad\omega\times Z_f, \\
\medskip\displaystyle
{\rm div}_{x'}\left(\int_{Z_f}\hat u'\,dz\right)=0&\hbox{in }\omega, \\
\medskip\displaystyle
\left(\int_{Z_f}\hat u'\,dz\right)\cdot n=0&\hbox{on }\partial\omega, \\
\medskip\displaystyle
\hat u=0& \hbox{in }\omega\times T,\\
\medskip\displaystyle
\hat u=0 &\hbox{on }\omega\times Z'\times \{0, 1\},
\\
\medskip
\hat \pi\in  L^{r'}(\omega;L^{r'}_{0,\#}(Z_f)).
\end{array}\right.
$$
Proceeding as in {\it Step 3}, we deduce the non-linear 2D Darcy's law of Carreau type (\ref{thm:system_gamma1}),  where the permeability operator $\mathcal{U}:\mathbb{R}^2\to \mathbb{R}^2$ is defined by (\ref{permfunc}). Here, for $\xi'\in\mathbb{R}^2$, $(w_{\xi'}, \pi_{\xi'})\in W^{1,r}_{0,\#}(Z_f)^3\times L^{r'}_{0,\#}(Z_f)$ is the unique solution of the local Stokes system (\ref{LocalProblemNonNewtonian}) with nonlinear viscosity given by the Carreau law (\ref{Carreau}).

\end{proof}

\begin{proof}[Proof of Theorem \ref{mainthmNewtonian}.]
We recall that in this case $r=2$, the Carreau law (\ref{Carreau}) reduces to $\eta_0$. Thus, by linearity, the variational formulation (\ref{form_var_hat}) can be written as follows: 
\begin{equation}\label{v_ineq_Newtonian}
\begin{array}{l}
\displaystyle
\medskip
\epsilon^{\gamma-2}\eta_0\int_{\omega\times Z} \mathbb{D}_{z}[\hat u_\epsilon]: \mathbb{D}_{z}[\varphi]dx'dy-\int_{\omega\times Z}\hat P_\epsilon\, {\rm div}_{x'}\varphi' \,dx'dz=\int_{\omega\times Z} f'\cdot \varphi' \,dx'dz+O_\epsilon,
\end{array}
\end{equation}
 for all $\varphi\in \mathcal{D}(\omega; C^\infty_{\#}(Z)^3)$ with $\varphi=0$ in $\omega\times T$ and on $\omega\times Z'\times \{0, 1\}$, satisfying the divergence conditions ${\rm div}_{x'}\int_Z \varphi'\,dz=0$ in $\omega$, $\int_Z \varphi'\,dz\cdot n=0$ on $\partial\omega$,  and ${\rm div}_{z}\varphi=0$ in $\omega\times Z$.\\
 
Passing to the limit in (\ref{v_ineq_Newtonian}) using convergences (\ref{conv_vel_gorro}) and (\ref{conv_pres_hat}), we take  into account that the second term converges to 
$$\int_{\omega\times Z}\tilde P\, {\rm div}_{x'}\varphi'\,dx'dz.$$
Since $\tilde P$ does not depend on $z$, by the divergence condition ${\rm div}_{x'}\int_Z \varphi'\,dz=0$, we have 
$$\int_{\omega\times Z}\tilde P\, {\rm div}_{x'}\varphi'\,dx'dz=\int_{\omega}\tilde P\, {\rm div}_{x'}\left(\int_Z\varphi'dz\right)\,dx'=0.$$
Therefore, the following relation holds true:
$$\begin{array}{l}
\displaystyle
\medskip
 \eta_0\int_{\omega\times Z} \mathbb{D}_{z}[\hat u]: \mathbb{D}_{z}[\varphi]dx'dz=\int_{\omega\times Z} f'\cdot \varphi' \,dx'dz,
\end{array}
$$
and by a density argument, remains valid for every $\varphi \in \mathcal{V}$ with $\mathcal{V}$ given by (\ref{Vspace}). 

Proceeding similarly as the end of  {\it Step 1} of the proof of Theorem \ref{mainthmPseudo}, this variational formulation is equivalent to  the system (\ref{prueba_lim_1}) with viscosity $\eta_0$, which admits a unique solution $(\hat u, \hat \pi, \tilde P)\in L^2(\omega;H^1_{\#}(Z_f)^3)\times L^{2}(\omega;L^2_{0,\#}(Z_f))\times (L^{2}_0(\omega)\cap H^1(\omega))$. This establishes the convergence of the whole sequence $(\tilde u_{\epsilon},\tilde P_{\epsilon})$. Finally, reasoning as in {\it Step 2} of  the proof of Theorem \ref{mainthmPseudo}, we get the linear effective 2D Darcy's law  (\ref{thm:systemNewtonian}), which concludes the proof of Theorem \ref{mainthmNewtonian}.

\end{proof}

\section{Numerical simulations of the effective models}\label{Section:Numerics}

In this section, we perform a numerical study of the asymptotic behaviour of a flow of a Carreau fluid between two parallel plates, separated by a thin layer of porous medium, as described in Section~\ref{sec:main}. 

 We assume that the flow is driven by a constant  body force $f=(f',0)$ with $f'\in \R^2$, which is a realistic assumption that is used in many applications such as enhanced oil recovery~\cite[Chapter 4]{Bird}. For simplicity, we also assume that $\omega$ is the unit square $\omega=(-1,1)^2$ and impose periodic boundary conditions on $\partial Q_\eps=\partial \omega\times (0,\eps)$. System~\eqref{1} is thus rewritten
\begin{equation}\label{Numerics:System}
\left\{\begin{array}{rl}\displaystyle
\medskip
-\epsilon^\gamma{\rm div}\left(\eta_r(\mathbb{D}[u_\epsilon])\mathbb{D}[u_\epsilon]\right)+ \nabla p_\epsilon = f & \hbox{in }\Omega_\epsilon,\\
\medskip
\displaystyle
{\rm div}\,u_\epsilon=0& \hbox{in }\Omega_\epsilon,\\
\medskip
\displaystyle
u_\epsilon=0& \hbox{on }\partial S_\epsilon,\\
u_\eps(x_1,-1)=u_\eps(x_1,1), & x_1\in (-1,1),\\ 
u_\eps(-1,x_2)=u_\eps(1,x_2), & x_2\in (-1,1),\\ 
\end{array}\right.
\end{equation}
As observed in~\cite[Section 4]{Carreau_Ang_Bonn_SG}, Lemma~\ref{lemma_compactness} needs to be slightly modified in that case: conditions~\eqref{divxproperty} and~\eqref{divyproperty} are respectively replaced by
\begin{equation}\label{divxproperty-periodic}
{\rm div}_{x'}\left(\int_0^1\tilde u'(x',z_3)\,dz_3\right)=0\quad  \hbox{in }\omega,
\end{equation}
\begin{equation}\label{divyproperty-periodic}
{\rm div}_{z}\,\hat u(x',z)=0\  \hbox{ in }\omega\times Z_f,\quad   {\rm div}_{x'}\left(\int_{Z_f}\hat u'(x',z)\,dz\right)=0\quad  \hbox{in }\omega.
\end{equation}
As a result, by the periodicity hypothesis on the flow, the boundary condition $\tilde V'\cdot n' = 0$ does not hold anymore on $\partial \omega$. Hence, in this particular configuration, we will discuss numerical simulations of Darcy's laws of the form as~\eqref{thm:system}, \eqref{thm:system_gamma1}, \eqref{thm:system_22} and \eqref{thm:systemNewtonian}, but without the aforementioned boundary condition on $\tilde V'$.

Since $f'$ is constant, one gets that $\tilde P\equiv 0$ and $\tilde V'$ is also a constant vector in $\R^2$.

\paragraph{Choice of rheological parameters. } Let us spectify the range of parameters that we use in the numerical tests. In addition to the exponent $\gamma\in \R$, the model~\eqref{Numerics:System} depends on four rheological parameters: $\eta_0, \eta_{\infty}, \lambda$ and $r$. Since in many applications (see for instance~\cite{Bird}), $\eta_{\infty}$ is very small compared with $\eta_0$, we arbitrarily fix $\eta_0=1$ and $\eta_\infty=10^{-3}$. As regards $\lambda$, we take $\lambda\in \{1,10,100\}$. In the case where the effective model is nonlinear with respect to $\lambda$, the possibility of multiplying $\lambda$ by a factor $10$ from one simulation to another will give us access to a large panel of behaviours for the effective models. Finally, we consider a pseudoplastic case $r=1.7$, the Newtonian case $r=2$ and two dilatant cases $r=2.3$ and $r=2.6$.


\paragraph{Shapes of inclusions $T$.} In order to illustrate the effect of a change of volume of the inclusion $T$, and the effect of anisotropy, we will consider four possible shapes for $T'$: two disks of respective radius $0.1$ and $0.3$, and the ellipse of semi-major axis $0.3$ and semi-minor axis $0.1$, parallel to the $x$ or the $y$ axis (see Fig.~\ref{Fig:ellipse-inclusions}). These shapes will be numbered $E_1,E_2,E_3$ and $E_4$ in the rest of this section.

\begin{figure}
	\begin{center}
		\begin{tabular}{cc}
			Shape $E_1$ & Shape $E_2$\\
			\includegraphics[height=0.25\textwidth]{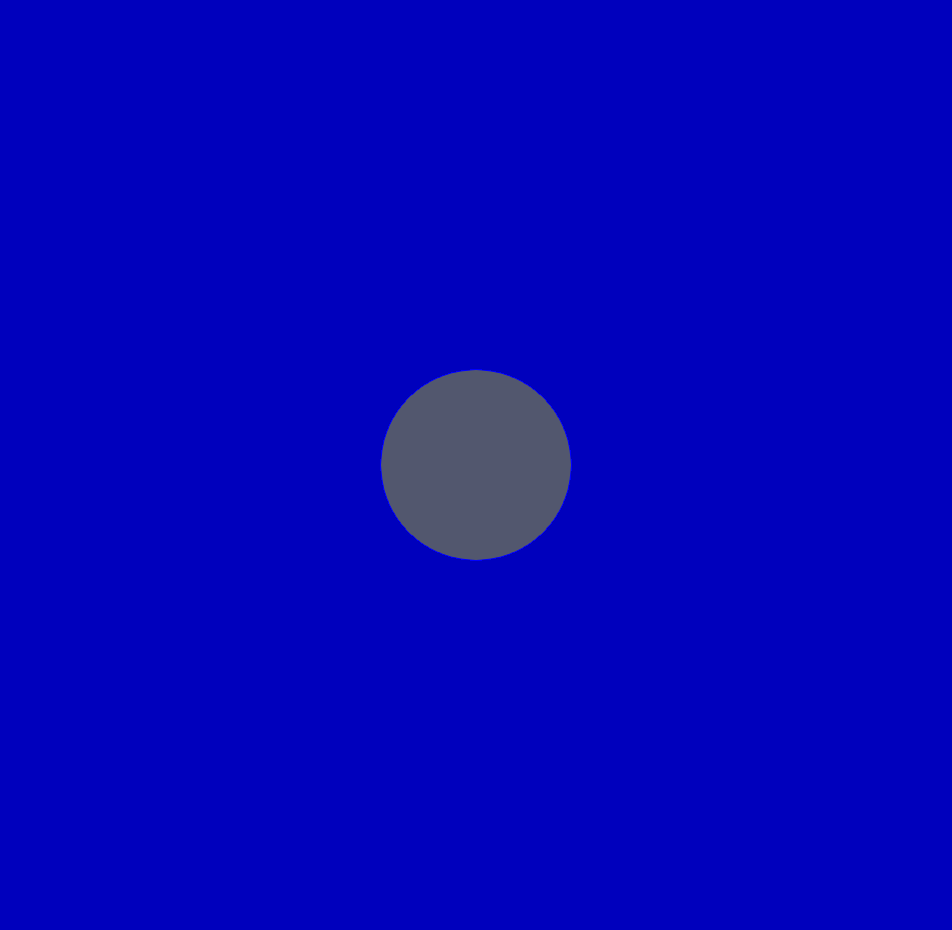}
			&
			\includegraphics[height=0.25\textwidth]{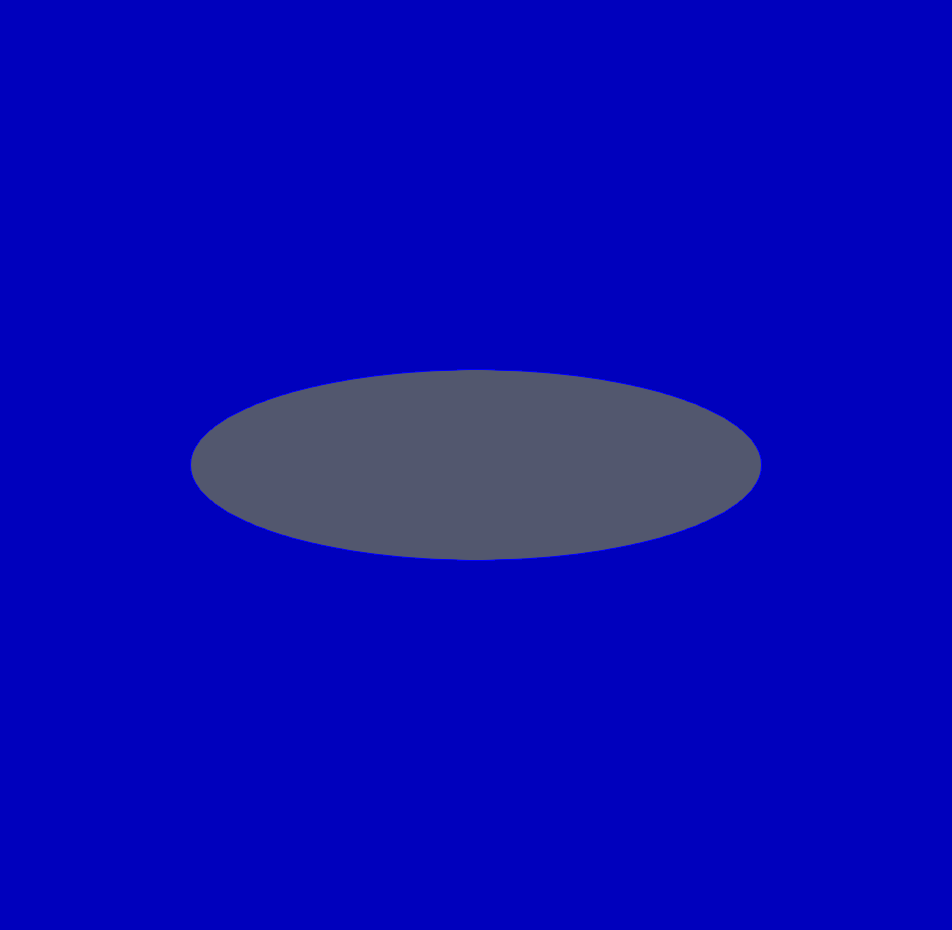}
			\\
			Shape $E_3$& Shape $E_4$\\
			\includegraphics[height=0.25\textwidth]{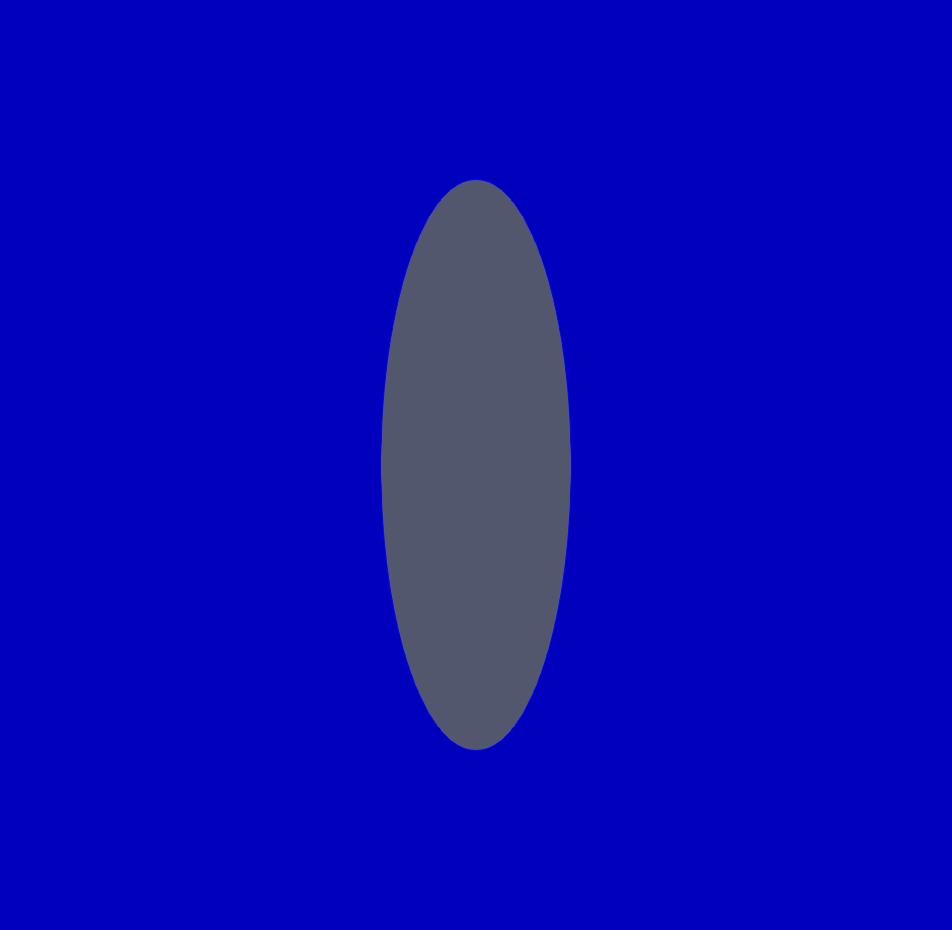}
			&
			\includegraphics[height=0.25\textwidth]{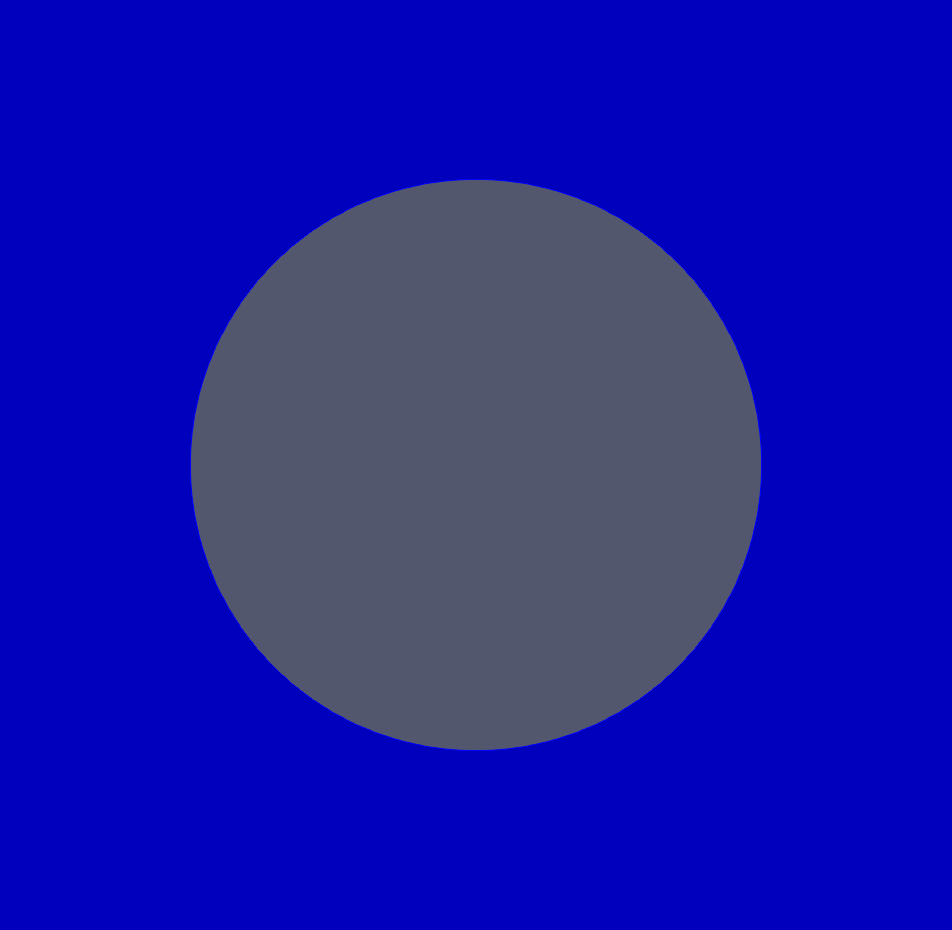}
		\end{tabular}
	\end{center}
	\caption{Representation of the $2D$ reference cells $Z'$ used in the numerical simulations of the effective models. The inclusion $T'$ (in grey) is surrounded by $Z_f'$ (in blue). From left to right and top to bottom: disk or radius $0.1$, ellipses of semi-major axis $0.3$ and semi-minor axis $0.1$, oriented respectively along the $x$ and $y$ directions, and disk of radius $0.3$. These shapes are numbered $E_1,E_2,E_3$ and $E_4$. }	
	\label{Fig:ellipse-inclusions}
\end{figure}

\paragraph{Numerical resolution of the cell problems.} The solution of each cell problem of the form~\eqref{LocalProblemNewtonian} or~\eqref{LocalProblemNonNewtonian} is computed using a mixed formulation, that we solve by a finite element method, using FreeFem++ software \cite{FREEFEM}. In the nonlinear cases~\eqref{LocalProblemNonNewtonian} where the viscosity $\eta_r$ follows a Carreau law or a power law, we rely on a fixed point algorithm (see for instance~\cite[Section 2.8]{Saramito}). We consider the Taylor-Hood approximation for the velocity-pressure pair, namely $P_2$ elements for the velocity field and $P_1$ elements for the pressure. This choice is well known to be compatible with the Babu\v{s}ka-Brezzi condition~\cite{GR86}. Each three-dimensional mesh of a cell $Z_f$ is obtained by constrained Delaunay tetrahedralization, and contains approximately $8000$ tetrahedra.

\subsection{Permeability tensor $\mathcal{A}$}

In the cases where the effective system is described by a linear $2D$ Darcy law, the response of the fluid to a constant pressure gradient $f'$ takes the form $\tilde V'=\frac{1}{\eta}\mathcal{A}f'$, where the constant viscosity $\eta$ is either equal to $\eta_0$ or $\eta_\infty$, depending on the values of $r$ and $\gamma$ (see Table~\ref{table_asymp1}). Hence, the asymptotic behaviour of the fluid is encoded in the permeability tensor $\mathcal A$.

In order to highlight certain properties of $\mathcal A$, we have summarized in Figure~\ref{Fig:permeabilityA} the coefficients that we obtain numerically, for the different shape geometries $E_1$ to $E_4$. We first notice that $\mathcal A$ is perfectly symmetric, which comes from its very definition in the continuous setting. Indeed, testing against $w^j$ in system~\eqref{LocalProblemNewtonian} satisfied by $w^i$, or against $w^i$ in the same system but satisfied by $w^j$, we obtain
\begin{align*}
\int_{Z_f}w_i^j(z)\, dz & = \int_{Z_f}D_z w^i(z)  : D_z w^j(z)\, dz 
 = \int_{Z_f}D_z w^j(z)  : D_z w^i(z)\, dz
 = \int_{Z_f} w^i_j(z)\, dz\, , 
\end{align*}
hence $\mathcal{A}_{i,j}=\mathcal{A}_{j,i}$.

One can also observe that, for isotropic inclusions $E_1$ and $E_4$, up to numerical errors, the tensor $\mathcal A$ is a diagonal matrix of the form $a\mathrm{I}$. This means that, as expected, the filtration velocity $\tilde V'$ is simply given by the product between $f'$ and the positive constant $a$ for such geometries. Also, the value of $a$ appears to be a decreasing function of the area of the obstacle, which is quite intuitive as well. In the case of anisotropic geometries $E_2$ and $E_3$, $\mathcal A$ is still diagonal, but its diagonal coefficients are not equal. Since $E_3$ is obtained by applying a rotation of angle $\pi/2$ to $E_2$, by symmetry, the associated matrix $\mathcal A$ is the transposed of the one associated with shape $E_2$.

However, for more general geometries of inclusions, the permeability tensor $\mathcal A$ is no longer symmetric. For instance, we have given in Fig.\!~\ref{Fig:permeabilityArotation} the coefficients of $\mathcal A$ computed when $E_2$ is rotated by an angle $\theta\in \{\pi/16, \pi/8, \pi/4\}$. As expected, the diagonal coefficients $\mathcal A_{1,2}$, $\mathcal A_{2,1}$ are not equal to zero in such configurations.

\begin{figure}
\begin{center}
\begin{tabular}{|c|c|}
	\hline
	Inclusion & Permeability tensor $\mathcal A$\\
	\hline\hline
	$E_1$ & $\begin{pmatrix}  0.0697955& -3.17061\times 10^{-6} \\-3.17061\times 10^{-6} & 0.0697947
	  \end{pmatrix}$	\\
	  \hline
	 $E_2$ & $\begin{pmatrix}  0.054708& -2.3436\times 10^{-6} \\-2.3436\times 10^{-6} & 0.0210978
	 \end{pmatrix}$	\\ 
	 \hline
	 $E_3$ & $\begin{pmatrix}  0.0211078& -5.69438\times 10^{-7} \\-5.69438\times 10^{-7} & 0.0547038
	\end{pmatrix}$	\\ 
	\hline
	 $E_4$ & $\begin{pmatrix}  0.0153292& -3.7408\times 10^{-7} \\-3.7408\times 10^{-7} & 0.0153284
	\end{pmatrix}$	\\
	\hline
\end{tabular}
\end{center}
\caption{Permeability tensor $\mathcal A$ computed numerically in the case of an elliptic inclusion ($E_1$ to $E_4$).}\label{Fig:permeabilityA}
\end{figure}


\begin{figure}
	\begin{center}
		\begin{tabular}{|c|c|}
			\hline
			Rotation angle $\theta$ & Permeability tensor $\mathcal A$\\
			\hline\hline
			$\pi/16$ & $\begin{pmatrix}  0.0534164& 0.00341334 \\0.00341334 & 0.0225729
			\end{pmatrix}$	\\
			\hline
			$\pi/8$ & $\begin{pmatrix}  0.0498291& 0.00653147 \\0.00653147 & 0.0266649
			\end{pmatrix}$ 	\\ 
			\hline
			$\pi/4$ & $\begin{pmatrix}  0.0385604& 0.00963438 \\0.00963438 & 0.0385636
			\end{pmatrix}$ 	\\ 
			\hline
		\end{tabular}
	\end{center}
	\caption{Permeability tensor $\mathcal A$ computed numerically in the case where the $E_2$ inclusion is rotated of an angle $\theta$ with respect to the $x_1$ axis.}\label{Fig:permeabilityArotation}
\end{figure}

\subsection{Carreau law ($\gamma=1$)}

In case $\gamma=1$, the behaviour of the effective model associated with a pseudoplastic fluid has been studied numerically in~\cite{Carreau_Ang_Bonn_SG}. In this subsection, we complete these numerical results by considering dilatant fluids as well. In order to perform comparisons, we use the same parameters as in~\cite{Carreau_Ang_Bonn_SG}, namely $\eta_0=1$, $\eta_{\infty}=10^{-3}$, and $\lambda\in \{1,10,100\}$. We consider dilatant fluids with $r\in \{2.3,2.6\}$, a Newtonian fluid ($r=2$) and a pseudoplastic fluid ($r=1.7$).

We explore numerically the influence of the amplitude of $f'$ and of its orientation, on the computed value of the filtration velocity $\tilde V'$.

\subsubsection{Influence of the amplitude of the pressure gradient $f'$}

We impose an exterior force $f'$ directed by $e_1$, \emph{i.e.}\! of the form $f'=(f_1,0)$, with $f_1\in [0,1]$. In that case, the computed filtration velocity $\tilde V'$ is also directed by $e_1$ and reads $\tilde V'=(\tilde V_1,0)$. Fig.~\ref{Fig:amplitude_Carreau} represents  $\tilde V_1$ as a function of $f_1$, for the different obstacle shapes $E_1$ to $E_4$ and the choice of parameters $r,\lambda$ specified above.

We observe that, for any choice of $r$ and $\lambda$, $\tilde V_1$ is an increasing function of $f_1$. However, for $r=1.7$ (pseudoplastic case), $\tilde V_1$ appears as a convex function of $f_1$, whereas for $r>2$ (dilatant case), it is a concave function of $f_1$. For $r=2$ (Newtonian case), the dependency on $f_1$ is linear, as expected. Also, the separation between the curves gets more pronounced as $\lambda$ increases, which comes from the fact that $\lambda$ is the coefficient in front of the nonlinear term $|D(u)|^{r-2}$ in the definition of the viscosity following the Carreau law~\eqref{Carreau}. For any values of $\lambda$, and any geometry of obstacle shape $E_1$ to $E_4$, for a given pressure gradient $f'$, the amplitude of the filtration velocity $\tilde V'$ diminishes as the exponent $r$ increases. This is consistent with the fact that, for high values of the shear rate, the viscosity of the dilatant fluid increases, whereas the behaviour of pseudoplastic fluids is the opposite.


\begin{figure}
	\begin{center}
	\begin{tabular}{rrr}
		\begin{tikzpicture}[scale=0.65]
		\begin{axis}[ 
		xlabel={$f_1$}, ylabel={$V_1$},
		title={$\lambda=1$},
		legend entries={$r=1.7$,$r=2$,$r=2.3$,$r=2.6$}, legend style={at={(0.02,0.98)}, anchor=north west}
		]
		\addplot+[mark=none] table[x index=1, y index=4]{Data-Carreau-amplitude_f-lambda=1-r=1.7_E1.txt};		
		\addplot+[mark=none] table[x index=1, y index=4]{Data-Carreau-amplitude_f-lambda=1-r=2_E1.txt};
		\addplot+[mark=none] table[x index=1, y index=4]{Data-Carreau-amplitude_f-lambda=1-r=2.3_E1.txt};
		\addplot+[mark=none] table[x index=1, y index=4]{Data-Carreau-amplitude_f-lambda=1-r=2.6_E1.txt};
		\end{axis}
		\end{tikzpicture}
		&
		\begin{tikzpicture}[scale=0.65]
\begin{axis}[ 
xlabel={$f_1$}, 
title={$\lambda=10$},
]
\addplot+[mark=none] table[x index=1, y index=4]{Data-Carreau-amplitude_f-lambda=10-r=1.7_E1.txt};		
\addplot+[mark=none] table[x index=1, y index=4]{Data-Carreau-amplitude_f-lambda=10-r=2_E1.txt};
\addplot+[mark=none] table[x index=1, y index=4]{Data-Carreau-amplitude_f-lambda=10-r=2.3_E1.txt};
\addplot+[mark=none] table[x index=1, y index=4]{Data-Carreau-amplitude_f-lambda=10-r=2.6_E1.txt};
\end{axis}
\end{tikzpicture}
&
		\begin{tikzpicture}[scale=0.65]
		\begin{axis}[ 
		xlabel={$f_1$}, 
		title={$\lambda=100$},
		]
		\addplot+[mark=none] table[x index=1, y index=4]{Data-Carreau-amplitude_f-lambda=100-r=1.7_E1.txt};		
		\addplot+[mark=none] table[x index=1, y index=4]{Data-Carreau-amplitude_f-lambda=100-r=2_E1.txt};
		\addplot+[mark=none] table[x index=1, y index=4]{Data-Carreau-amplitude_f-lambda=100-r=2.3_E1.txt};
		\addplot+[mark=none] table[x index=1, y index=4]{Data-Carreau-amplitude_f-lambda=100-r=2.6_E1.txt};
		\end{axis}
		\end{tikzpicture}
		\\
				\begin{tikzpicture}[scale=0.65]
		\begin{axis}[ 
		xlabel={$f_1$}, ylabel={$V_1$},
		legend entries={$r=1.7$,$r=2$,$r=2.3$,$r=2.6$}, legend style={at={(0.02,0.98)}, anchor=north west}
		]
		\addplot+[mark=none] table[x index=1, y index=4]{Data-Carreau-amplitude_f-lambda=1-r=1.7_E2.txt};		
		\addplot+[mark=none] table[x index=1, y index=4]{Data-Carreau-amplitude_f-lambda=1-r=2_E2.txt};
		\addplot+[mark=none] table[x index=1, y index=4]{Data-Carreau-amplitude_f-lambda=1-r=2.3_E2.txt};
		\addplot+[mark=none] table[x index=1, y index=4]{Data-Carreau-amplitude_f-lambda=1-r=2.6_E2.txt};
		\end{axis}
		\end{tikzpicture}
		&
		\begin{tikzpicture}[scale=0.65]
		\begin{axis}[ 
		xlabel={$f_1$}, 
		]
		\addplot+[mark=none] table[x index=1, y index=4]{Data-Carreau-amplitude_f-lambda=10-r=1.7_E2.txt};		
		\addplot+[mark=none] table[x index=1, y index=4]{Data-Carreau-amplitude_f-lambda=10-r=2_E2.txt};
		\addplot+[mark=none] table[x index=1, y index=4]{Data-Carreau-amplitude_f-lambda=10-r=2.3_E2.txt};
		\addplot+[mark=none] table[x index=1, y index=4]{Data-Carreau-amplitude_f-lambda=10-r=2.6_E2.txt};
		\end{axis}
		\end{tikzpicture}
		&
		\begin{tikzpicture}[scale=0.65]
		\begin{axis}[ 
		xlabel={$f_1$}, 
		]
		\addplot+[mark=none] table[x index=1, y index=4]{Data-Carreau-amplitude_f-lambda=100-r=1.7_E2.txt};		
		\addplot+[mark=none] table[x index=1, y index=4]{Data-Carreau-amplitude_f-lambda=100-r=2_E2.txt};
		\addplot+[mark=none] table[x index=1, y index=4]{Data-Carreau-amplitude_f-lambda=100-r=2.3_E2.txt};
		\addplot+[mark=none] table[x index=1, y index=4]{Data-Carreau-amplitude_f-lambda=100-r=2.6_E2.txt};
		\end{axis}
		\end{tikzpicture}
		\\
\begin{tikzpicture}[scale=0.65]
\begin{axis}[ 
xlabel={$f_1$}, ylabel={$V_1$},
legend entries={$r=1.7$,$r=2$,$r=2.3$,$r=2.6$}, legend style={at={(0.02,0.98)}, anchor=north west}
]
\addplot+[mark=none] table[x index=1, y index=4]{Data-Carreau-amplitude_f-lambda=1-r=1.7_E3.txt};		
\addplot+[mark=none] table[x index=1, y index=4]{Data-Carreau-amplitude_f-lambda=1-r=2_E3.txt};
\addplot+[mark=none] table[x index=1, y index=4]{Data-Carreau-amplitude_f-lambda=1-r=2.3_E3.txt};
\addplot+[mark=none] table[x index=1, y index=4]{Data-Carreau-amplitude_f-lambda=1-r=2.6_E3.txt};
\end{axis}
\end{tikzpicture}
&
\begin{tikzpicture}[scale=0.65]
\begin{axis}[ 
xlabel={$f_1$}, 
]
\addplot+[mark=none] table[x index=1, y index=4]{Data-Carreau-amplitude_f-lambda=10-r=1.7_E3.txt};		
\addplot+[mark=none] table[x index=1, y index=4]{Data-Carreau-amplitude_f-lambda=10-r=2_E3.txt};
\addplot+[mark=none] table[x index=1, y index=4]{Data-Carreau-amplitude_f-lambda=10-r=2.3_E3.txt};
\addplot+[mark=none] table[x index=1, y index=4]{Data-Carreau-amplitude_f-lambda=10-r=2.6_E3.txt};
\end{axis}
\end{tikzpicture}
&
\begin{tikzpicture}[scale=0.65]
\begin{axis}[ 
xlabel={$f_1$}, 
]
\addplot+[mark=none] table[x index=1, y index=4]{Data-Carreau-amplitude_f-lambda=100-r=1.7_E3.txt};		
\addplot+[mark=none] table[x index=1, y index=4]{Data-Carreau-amplitude_f-lambda=100-r=2_E3.txt};
\addplot+[mark=none] table[x index=1, y index=4]{Data-Carreau-amplitude_f-lambda=100-r=2.3_E3.txt};
\addplot+[mark=none] table[x index=1, y index=4]{Data-Carreau-amplitude_f-lambda=100-r=2.6_E3.txt};
\end{axis}
\end{tikzpicture}
\\
\begin{tikzpicture}[scale=0.65]
\begin{axis}[ 
xlabel={$f_1$}, ylabel={$V_1$},
legend entries={$r=1.7$,$r=2$,$r=2.3$,$r=2.6$}, legend style={at={(0.02,0.98)}, anchor=north west}
]
\addplot+[mark=none] table[x index=1, y index=4]{Data-Carreau-amplitude_f-lambda=1-r=1.7_E4.txt};		
\addplot+[mark=none] table[x index=1, y index=4]{Data-Carreau-amplitude_f-lambda=1-r=2_E4.txt};
\addplot+[mark=none] table[x index=1, y index=4]{Data-Carreau-amplitude_f-lambda=1-r=2.3_E4.txt};
\addplot+[mark=none] table[x index=1, y index=4]{Data-Carreau-amplitude_f-lambda=1-r=2.6_E4.txt};
\end{axis}
\end{tikzpicture}
&
\begin{tikzpicture}[scale=0.65]
\begin{axis}[ 
xlabel={$f_1$}, 
]
\addplot+[mark=none] table[x index=1, y index=4]{Data-Carreau-amplitude_f-lambda=10-r=1.7_E4.txt};		
\addplot+[mark=none] table[x index=1, y index=4]{Data-Carreau-amplitude_f-lambda=10-r=2_E4.txt};
\addplot+[mark=none] table[x index=1, y index=4]{Data-Carreau-amplitude_f-lambda=10-r=2.3_E4.txt};
\addplot+[mark=none] table[x index=1, y index=4]{Data-Carreau-amplitude_f-lambda=10-r=2.6_E4.txt};
\end{axis}
\end{tikzpicture}
&
\begin{tikzpicture}[scale=0.65]
\begin{axis}[ 
xlabel={$f_1$}, 
]
\addplot+[mark=none] table[x index=1, y index=4]{Data-Carreau-amplitude_f-lambda=100-r=1.7_E4.txt};		
\addplot+[mark=none] table[x index=1, y index=4]{Data-Carreau-amplitude_f-lambda=100-r=2_E4.txt};
\addplot+[mark=none] table[x index=1, y index=4]{Data-Carreau-amplitude_f-lambda=100-r=2.3_E4.txt};
\addplot+[mark=none] table[x index=1, y index=4]{Data-Carreau-amplitude_f-lambda=100-r=2.6_E4.txt};
\end{axis}
\end{tikzpicture}
	\end{tabular}
	\end{center}
\caption{Case $\gamma=1$ (Carreau law). Component $V_1$ of the mean filtration velocity $V'$ plotted against $f_1$, with $f'=(f_1,0)$, for $r\in \{1.7,2,2.3,2.6\}$, in the case of elliptic inclusions $E_1$ (first line), $E_2$ (second line), $E_3$ (third line) and $E_4$ (fourth line). From left to right: $\lambda=1, \lambda=10, \lambda=100$.}\label{Fig:amplitude_Carreau}
\end{figure}
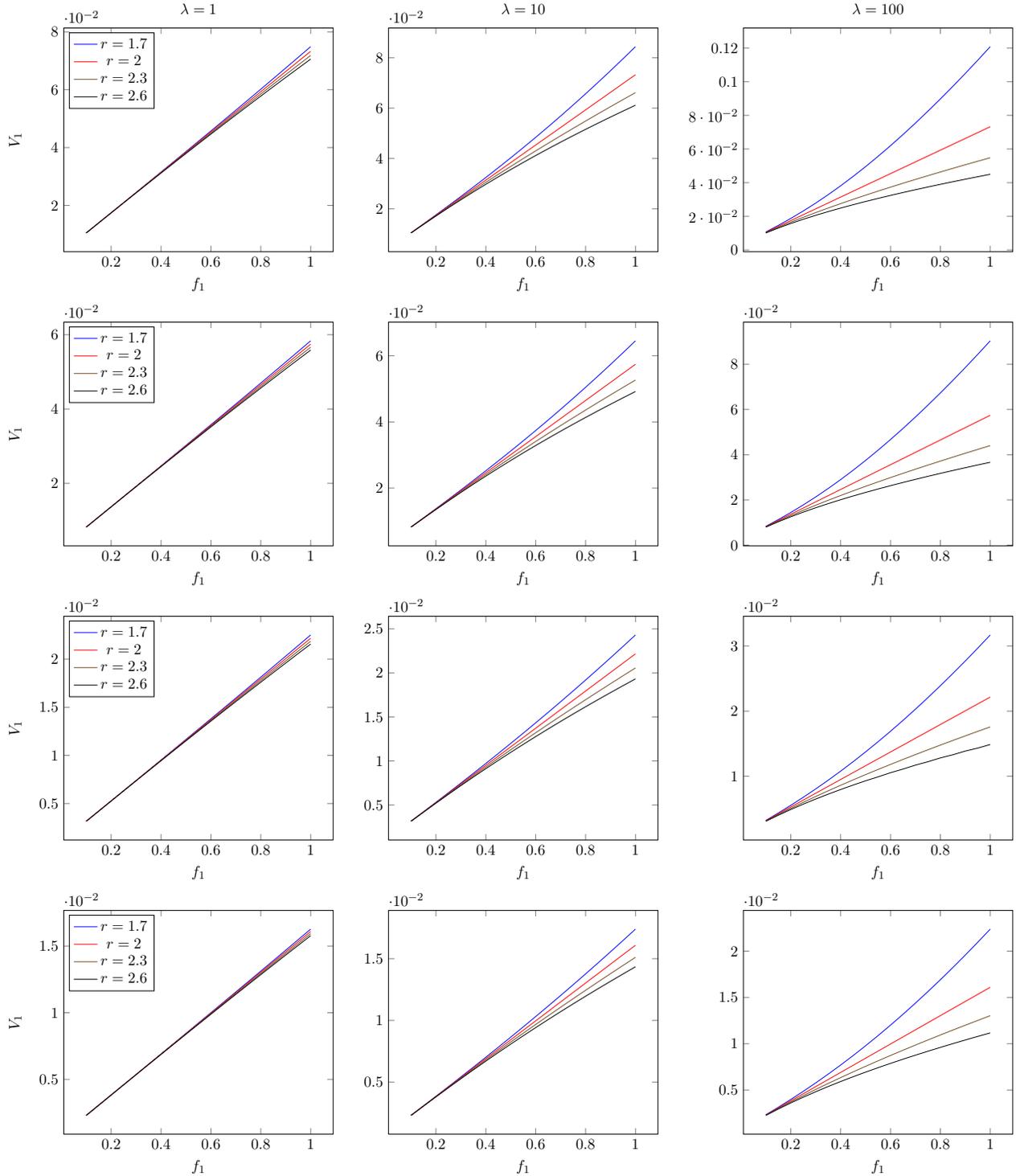

\subsubsection{Influence of the orientation of $f'$}

In order to test the impact of a rototion of $f'$ on the behaviour of the effective system, we consider the anisotropic shape $E_2$ and a family of pressure gradients $f'=(\cos \theta,\sin \theta)$, with the angle $\theta\in [0,\pi/2]$. The results that we obtain are represented in Fig.~\ref{Fig:rotation_Carreau}.

We notice that the orientation of $V'$, which is generally not parallel to the pressure gradient $f'$ due to the anisotropy of the obstacle, does not appear to depend on the rheological parameters $r, \lambda$. In all the simulated configurations, the orientation of $V'$ remains very close to what is observed in the Newtonian case $r=2$. As regards the amplitude of $V'$, as observed in the previous paragraph, there is almost no observable effect for $\lambda=1$. This can be explained by the fact that, for small values of $\lambda$, and an imposed pressure gradient of fixed size $|f'|=1$, the viscosity described by the Carreau law is close to the constant value $\eta_r=\eta_0$ corresponding to the Newtonian case $r=2$. On the contrary, for $\lambda=100$, the amplitude of $V'$ is noticeably reduced as $r$ increases: for instance, for $r=1.7$, the maximal filtration velocity is about $0.085$ while it reaches only about $0.035$ for $r=2.6$.



\begin{figure}
	\begin{center}
		\begin{tabular}{ll}
			\begin{tikzpicture}[scale=0.65]
			\begin{axis}[title={$\lambda=1,\quad r=1.7$}, xlabel=$f_1$,ylabel=$f_2$,colorbar]
			\addplot[point meta = {\thisrow{Vnorm})},quiver={u=\thisrow{Vm1norm},v=\thisrow{Vm2norm},scale arrows = 0.2},-stealth,line width=1.5pt,quiver/colored = {mapped color},] table {Data-Carreau-rotation_f-lambda=1-r=1.7.txt};
			\end{axis}
			\end{tikzpicture}&	
			\begin{tikzpicture}[scale=0.65]
			\begin{axis}[title={$\lambda=100,\quad r=1.7$}, xlabel=$f_1$,ylabel=$f_2$,colorbar]
			\addplot[point meta = {\thisrow{Vnorm})},quiver={u=\thisrow{Vm1norm},v=\thisrow{Vm2norm},scale arrows = 0.2},-stealth,line width=1.5pt,quiver/colored = {mapped color},] table {Data-Carreau-rotation_f-lambda=100-r=1.7.txt};
			\end{axis}
			\end{tikzpicture}\\			
			\begin{tikzpicture}[scale=0.65]
			\begin{axis}[title={$\lambda=1,\quad r=2$}, xlabel=$f_1$,ylabel=$f_2$,colorbar]
			\addplot[point meta = {\thisrow{Vnorm})},quiver={u=\thisrow{Vm1norm},v=\thisrow{Vm2norm},scale arrows = 0.2},-stealth,line width=1.5pt,quiver/colored = {mapped color},] table {Data-Carreau-rotation_f-lambda=1-r=2.txt};	
			\end{axis}	
			\end{tikzpicture}&
			\begin{tikzpicture}[scale=0.65]
			\begin{axis}[title={$\lambda=100,\quad r=2$}, xlabel=$f_1$,ylabel=$f_2$,colorbar]
			\addplot[point meta = {\thisrow{Vnorm})},quiver={u=\thisrow{Vm1norm},v=\thisrow{Vm2norm},scale arrows = 0.2},-stealth,line width=1.5pt,quiver/colored = {mapped color},] table {Data-Carreau-rotation_f-lambda=100-r=2.txt};	
			\end{axis}	
			\end{tikzpicture}
			\\
			\begin{tikzpicture}[scale=0.65]
			\begin{axis}[title={$\lambda=1, \quad r=2.3$}, xlabel=$f_1$,ylabel=$f_2$,colorbar]
			\addplot[point meta = {\thisrow{Vnorm})},quiver={u=\thisrow{Vm1norm},v=\thisrow{Vm2norm},scale arrows = 0.2},-stealth,line width=1.5pt,quiver/colored = {mapped color},] table {Data-Carreau-rotation_f-lambda=1-r=2.3.txt};	
			\end{axis}	
			\end{tikzpicture}
			&
			\begin{tikzpicture}[scale=0.65]
			\begin{axis}[title={$\lambda=100, \quad r=2.3$}, xlabel=$f_1$,ylabel=$f_2$,colorbar]
			\addplot[point meta = {\thisrow{Vnorm})},quiver={u=\thisrow{Vm1norm},v=\thisrow{Vm2norm},scale arrows = 0.2},-stealth,line width=1.5pt,quiver/colored = {mapped color},] table {Data-Carreau-rotation_f-lambda=100-r=2.3.txt};	
			\end{axis}	
			\end{tikzpicture}
			\\
			\begin{tikzpicture}[scale=0.65]
			\begin{axis}[title={$\lambda=1, \quad r=2.6$}, xlabel=$f_1$,ylabel=$f_2$,colorbar]
			\addplot[point meta = {\thisrow{Vnorm})},quiver={u=\thisrow{Vm1norm},v=\thisrow{Vm2norm},scale arrows = 0.2},-stealth,line width=1.5pt,quiver/colored = {mapped color},] table {Data-Carreau-rotation_f-lambda=1-r=2.6.txt};	
			\end{axis}	
			\end{tikzpicture}&
			\begin{tikzpicture}[scale=0.65]
			\begin{axis}[title={$\lambda=100, \quad r=2.6$}, xlabel=$f_1$,ylabel=$f_2$,colorbar]
			\addplot[point meta = {\thisrow{Vnorm})},quiver={u=\thisrow{Vm1norm},v=\thisrow{Vm2norm},scale arrows = 0.2},-stealth,line width=1.5pt,quiver/colored = {mapped color},] table {Data-Carreau-rotation_f-lambda=100-r=2.6.txt};	
			\end{axis}	
			\end{tikzpicture}
		\end{tabular}
	\end{center}
	\caption{Case $\gamma=1$ (Carreau law). Representation of $\tilde V'$ when $f'$ is a unit vector of the form $(f_1,f_2)=(\cos \theta,\sin \theta)$ with $\theta\in [0,\pi/2]$, in the case of an elliptic inclusion $E_2$. Each vector $\tilde V'$ is represented by a vector of length $0.2$, localized at point $(f_1,f_2)$ and colored according to $|\tilde V'|$. The left column corresponds to $\lambda=1$ and the right one to $\lambda=100$. Each line from top to bottom corresponds respectively to $r=1.7$, $r=2$, $r=2.3$ and $r=2.6$.}
	\label{Fig:rotation_Carreau}
\end{figure}
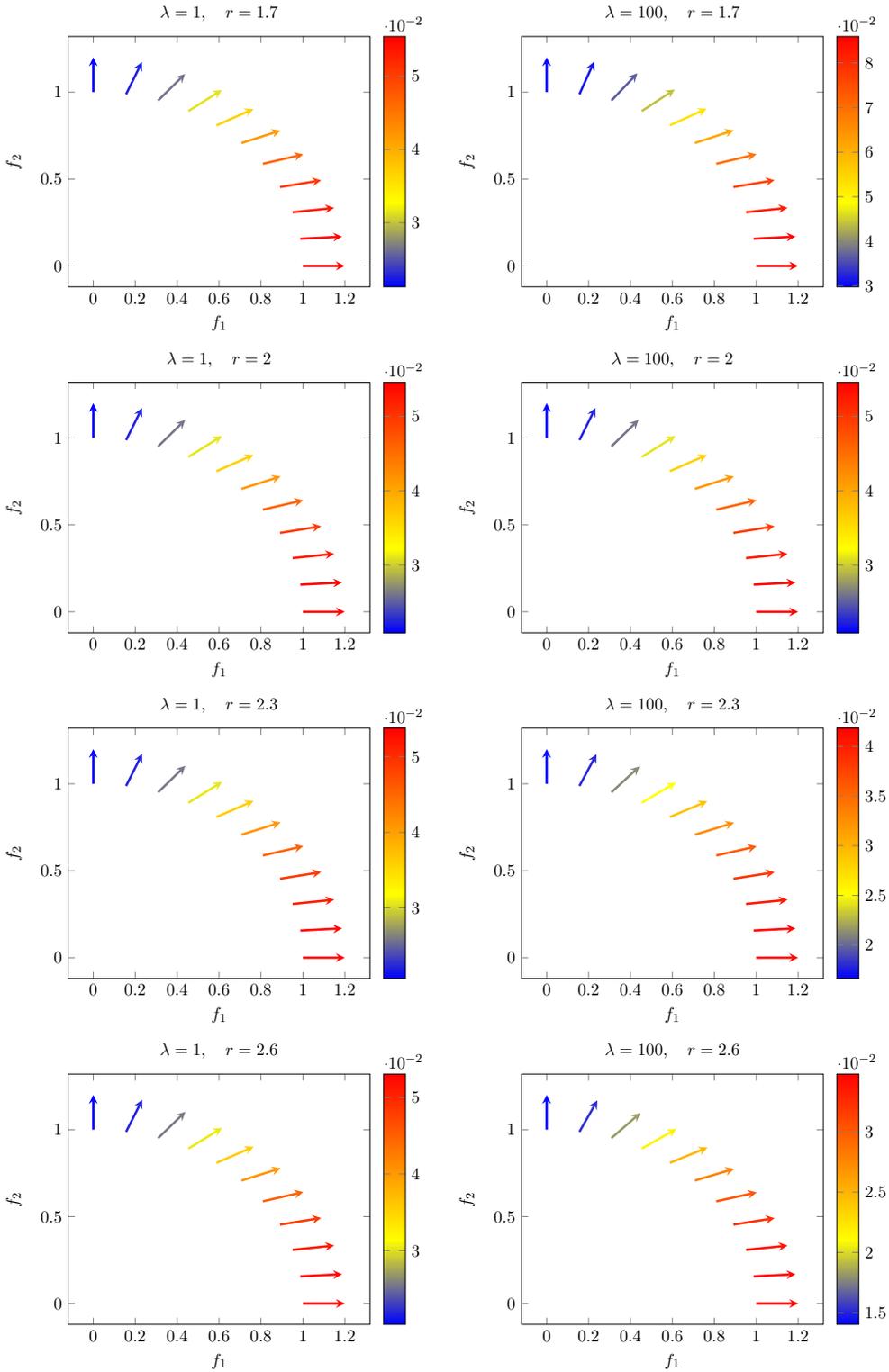

\subsection{Power law ($r>2$, $\gamma>1$)}

In order to allow for comparisons, we perform similar simulations for the power law regime ($r>2$, $\gamma>1$) as we did for the Carreau regime ($\gamma=1$), taking $\lambda\in \{1,10,100\}$ and $r\in \{2.3,2.6\}$, to separately test the influence of the amplitude of $f'$, and of the angle formed by $f'$ and the $x_1$ axis. In each case, we provide the Newtonian behaviour corresponding to $r=2$, as a reference model.

\subsubsection{Influence of the amplitude of $f'$}

We have plotted in Fig.~\ref{Fig:amplitude_power} the horizontal component of $\tilde V'$ as a function of $f_1$, when $f'$ takes the form $f'=(f_1,0)$. Contrary to what we can observe in Fig.~\ref{Fig:amplitude_Carreau}, for a given value of $f_1$, the order between the computed value of $\tilde V_1$ for $r$ in $\{2,2.3,2.6\}$ depends on the choice of $\lambda$: the behaviour of the effective system is no longer monotonous with respect to $r$. In particular, for $\lambda=100$, the different curves intersect for a specific value of $f_1$, which seems to be unique and depends on the shape of the obstacle. For instance, the $f_1$ component of this intersection point is between $0.3$ and $0.4$ for $E_1$ and close to $0.5$ for $E_4$. Moreover, when $f_1$ exceeds this value, we recover a behaviour that is very similar to what appeared in Fig.\!~\ref{Fig:amplitude_Carreau}: for $r\in \{2.3,2.6\}$, $V_1$ is an increasing concave function of $f_1$, and $V_1$ is smaller for $r=2.6$ than for $r=2.3$. 

We may interpret these features as follows. When $|f'|$ is small and $\eta_r$ follows the power law $\eta_r(\mathbb{D}_z[w_{\xi'}])=|\mathbb{D}_z[w_{\xi'}]|^{r-2}$, the deformation rate tensor $D_z[w_{f'}]$ associated to the solution $w_{f'}$ of system~\eqref{LocalProblemNonNewtonian} (with $\xi'=f'$) also has a small amplitude. Thus, as mentioned in the Introduction, the power law is not well-suited to capture the behaviour of a quasi-Newtonian fluid in such regime. On the opposite, for high values of parameter $\lambda$ such as $\lambda=100$ in our simulations, and $f_1$ large enough, the qualitative behaviour of the nonlinear $2D$ Darcy law associated with the Carreau law or with the power law become very similar, since the Carreau law behaves as a power law for large values of the deformation rate.


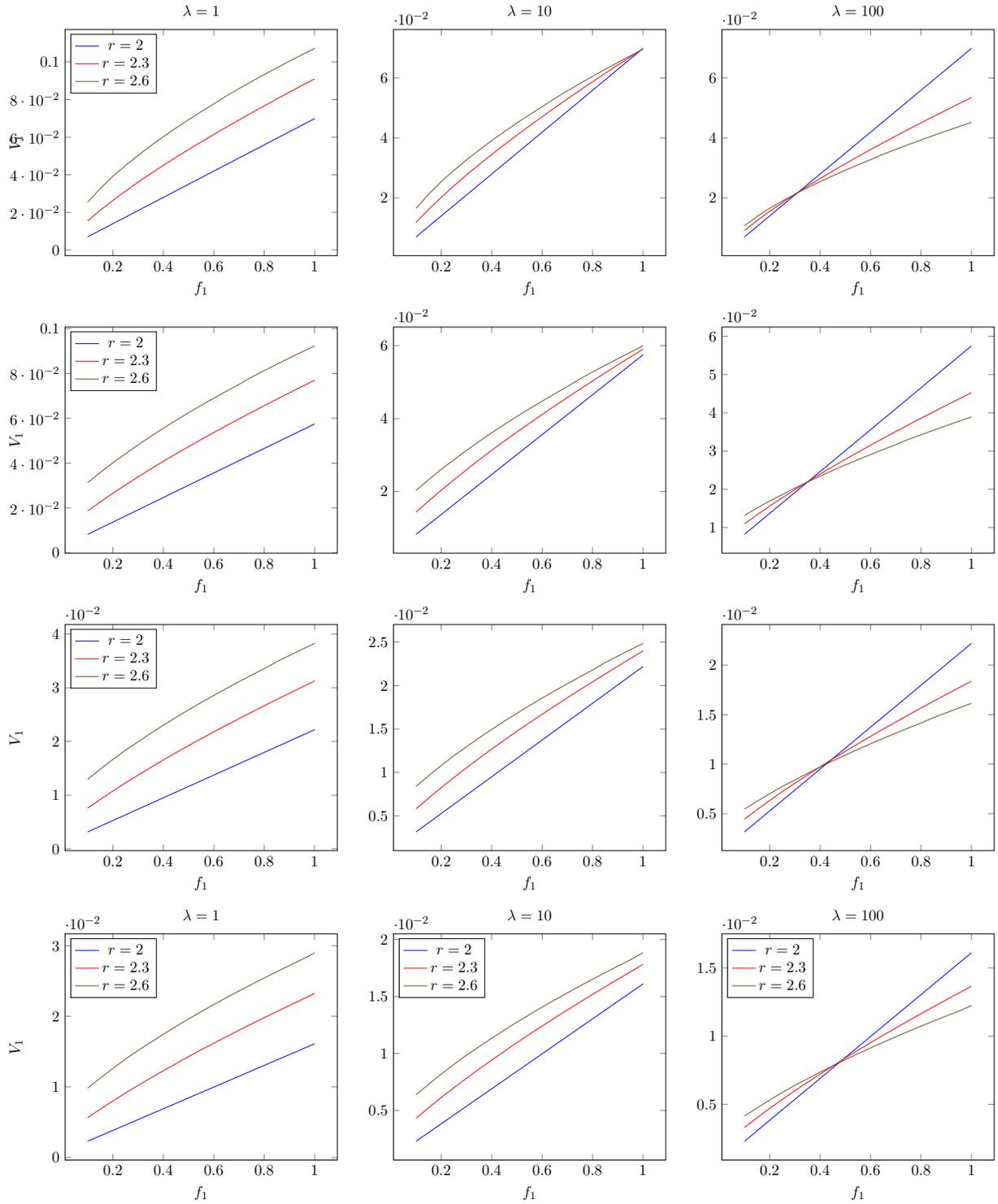
\begin{figure}
	\begin{center}
		\begin{tabular}{rrr}
			\begin{tikzpicture}[scale=0.65]
			\begin{axis}[ 
			xlabel={$f_1$}, ylabel={$V_1$},
			title={$\lambda=1$},
			legend entries={$r=2$,$r=2.3$,$r=2.6$}, legend style={at={(0.02,0.98)}, anchor=north west}
			]
			\addplot+[mark=none] table[x index=2, y index=5]{Data-Power-amplitude_f-r=2_E1.txt};		
			\addplot+[mark=none] table[x index=2, y index=5]{Data-Power-amplitude_f-r=2.3_E1.txt};
			\addplot+[mark=none] table[x index=2, y index=5]{Data-Power-amplitude_f-r=2.6_E1.txt};					
			\end{axis}
			\end{tikzpicture}
			&
			\begin{tikzpicture}[scale=0.65]
			\begin{axis}[ 
			xlabel={$f_1$}, 
			title={$\lambda=10$},
			]
			\addplot+[mark=none] table[x index=2, y index=7]{Data-Power-amplitude_f-r=2_E1.txt};		
			\addplot+[mark=none] table[x index=2, y index=7]{Data-Power-amplitude_f-r=2.3_E1.txt};
			\addplot+[mark=none] table[x index=2, y index=7]{Data-Power-amplitude_f-r=2.6_E1.txt};					
			\end{axis}
			\end{tikzpicture}
			&
			\begin{tikzpicture}[scale=0.65]
			\begin{axis}[ 
			xlabel={$f_1$}, 
			title={$\lambda=100$},
			]
			\addplot+[mark=none] table[x index=2, y index=9]{Data-Power-amplitude_f-r=2_E1.txt};		
			\addplot+[mark=none] table[x index=2, y index=9]{Data-Power-amplitude_f-r=2.3_E1.txt};
			\addplot+[mark=none] table[x index=2, y index=9]{Data-Power-amplitude_f-r=2.6_E1.txt};					
			\end{axis}
			\end{tikzpicture}
			\\
						\begin{tikzpicture}[scale=0.65]
			\begin{axis}[ 
			xlabel={$f_1$}, ylabel={$V_1$},
			legend entries={$r=2$,$r=2.3$,$r=2.6$}, legend style={at={(0.02,0.98)}, anchor=north west}
			]
			\addplot+[mark=none] table[x index=2, y index=5]{Data-Power-amplitude_f-r=2_E2.txt};		
			\addplot+[mark=none] table[x index=2, y index=5]{Data-Power-amplitude_f-r=2.3_E2.txt};
			\addplot+[mark=none] table[x index=2, y index=5]{Data-Power-amplitude_f-r=2.6_E2.txt};					
			\end{axis}
			\end{tikzpicture}
			&
			\begin{tikzpicture}[scale=0.65]
			\begin{axis}[ 
			xlabel={$f_1$}, 
			]
			\addplot+[mark=none] table[x index=2, y index=7]{Data-Power-amplitude_f-r=2_E2.txt};		
			\addplot+[mark=none] table[x index=2, y index=7]{Data-Power-amplitude_f-r=2.3_E2.txt};
			\addplot+[mark=none] table[x index=2, y index=7]{Data-Power-amplitude_f-r=2.6_E2.txt};					
			\end{axis}
			\end{tikzpicture}
			&
			\begin{tikzpicture}[scale=0.65]
			\begin{axis}[ 
			xlabel={$f_1$}, 
			]
			\addplot+[mark=none] table[x index=2, y index=9]{Data-Power-amplitude_f-r=2_E2.txt};		
			\addplot+[mark=none] table[x index=2, y index=9]{Data-Power-amplitude_f-r=2.3_E2.txt};
			\addplot+[mark=none] table[x index=2, y index=9]{Data-Power-amplitude_f-r=2.6_E2.txt};					
			\end{axis}
			\end{tikzpicture}
			\\
					\begin{tikzpicture}[scale=0.65]
		\begin{axis}[ 
		xlabel={$f_1$}, ylabel={$V_1$},
		legend entries={$r=2$,$r=2.3$,$r=2.6$}, legend style={at={(0.02,0.98)}, anchor=north west}
		]
		\addplot+[mark=none] table[x index=2, y index=5]{Data-Power-amplitude_f-r=2_E3.txt};		
		\addplot+[mark=none] table[x index=2, y index=5]{Data-Power-amplitude_f-r=2.3_E3.txt};
		\addplot+[mark=none] table[x index=2, y index=5]{Data-Power-amplitude_f-r=2.6_E3.txt};					
		\end{axis}
		\end{tikzpicture}
		&
		\begin{tikzpicture}[scale=0.65]
		\begin{axis}[ 
		xlabel={$f_1$}, 
		]
		\addplot+[mark=none] table[x index=2, y index=7]{Data-Power-amplitude_f-r=2_E3.txt};		
		\addplot+[mark=none] table[x index=2, y index=7]{Data-Power-amplitude_f-r=2.3_E3.txt};
		\addplot+[mark=none] table[x index=2, y index=7]{Data-Power-amplitude_f-r=2.6_E3.txt};					
		\end{axis}
		\end{tikzpicture}
		&
		\begin{tikzpicture}[scale=0.65]
		\begin{axis}[ 
		xlabel={$f_1$}, 
		]
		\addplot+[mark=none] table[x index=2, y index=9]{Data-Power-amplitude_f-r=2_E3.txt};		
		\addplot+[mark=none] table[x index=2, y index=9]{Data-Power-amplitude_f-r=2.3_E3.txt};
		\addplot+[mark=none] table[x index=2, y index=9]{Data-Power-amplitude_f-r=2.6_E3.txt};					
		\end{axis}
		\end{tikzpicture}
		\\
		\begin{tikzpicture}[scale=0.65]
		\begin{axis}[ 
		xlabel={$f_1$}, ylabel={$V_1$},
		title={$\lambda=1$},
		legend entries={$r=2$,$r=2.3$,$r=2.6$}, legend style={at={(0.02,0.98)}, anchor=north west}
		]
		\addplot+[mark=none] table[x index=2, y index=5]{Data-Power-amplitude_f-r=2_E4.txt};		
		\addplot+[mark=none] table[x index=2, y index=5]{Data-Power-amplitude_f-r=2.3_E4.txt};
		\addplot+[mark=none] table[x index=2, y index=5]{Data-Power-amplitude_f-r=2.6_E4.txt};					
		\end{axis}
		\end{tikzpicture}
		&
		\begin{tikzpicture}[scale=0.65]
		\begin{axis}[ 
		xlabel={$f_1$}, 
		title={$\lambda=10$},
		legend entries={$r=2$,$r=2.3$,$r=2.6$}, legend style={at={(0.02,0.98)}, anchor=north west}
		]
		\addplot+[mark=none] table[x index=2, y index=7]{Data-Power-amplitude_f-r=2_E4.txt};		
		\addplot+[mark=none] table[x index=2, y index=7]{Data-Power-amplitude_f-r=2.3_E4.txt};
		\addplot+[mark=none] table[x index=2, y index=7]{Data-Power-amplitude_f-r=2.6_E4.txt};					
		\end{axis}
		\end{tikzpicture}
		&
		\begin{tikzpicture}[scale=0.65]
		\begin{axis}[ 
		xlabel={$f_1$}, 
		title={$\lambda=100$},
		legend entries={$r=2$,$r=2.3$,$r=2.6$}, legend style={at={(0.02,0.98)}, anchor=north west}
		]
		\addplot+[mark=none] table[x index=2, y index=9]{Data-Power-amplitude_f-r=2_E4.txt};		
		\addplot+[mark=none] table[x index=2, y index=9]{Data-Power-amplitude_f-r=2.3_E4.txt};
		\addplot+[mark=none] table[x index=2, y index=9]{Data-Power-amplitude_f-r=2.6_E4.txt};					
		\end{axis}
		\end{tikzpicture}	
		\end{tabular}
	\end{center}
\caption{Case $\gamma>1$, $r>1$ (power law). Component $\tilde V_1$ of the mean filtration velocity $\tilde V'$ plotted against $f_1$, with $f'=(f_1,0)$, for $r\in \{2,2.3,2.6\}$ and elliptic inclusions $E_1$ (first line), $E_2$ (second line), $E_3$ (third line) and $E_4$ (fourth line). From left to right: $\lambda=1, \lambda=10, \lambda=100$.}\label{Fig:amplitude_power}
\end{figure}

\subsubsection{Influence of the orientation of $f'$}

Finally, we have represented in Fig.\!~\ref{Fig:rotation_power} the vector $\tilde V'$ computed for different orientations of the imposed pressure gradient $f'$, similarly as in Fig.\!~\ref{Fig:rotation_Carreau}. We can observe for $\lambda=100$ and $r\in \{2.3,2.6\}$ a very similar behaviour of the effective systems~\eqref{thm:system_gamma1} and \eqref{thm:system_22}, which seems to confirm the above interpretation. The differences between both systems appear for $\lambda=1$ and concern only the amplitude of $\tilde V'$, which increases as $r$ increases in the case of the power law, while it was not affected by variations of $r$ in the case of Carreau law and for this particular value of $\lambda$.



\begin{figure}
	\begin{center}
		\begin{tabular}{ll}
			\begin{tikzpicture}[scale=0.65]
			\begin{axis}[title={$\lambda=1,\quad r=2$}, xlabel=$f_1$,ylabel=$f_2$,colorbar]
			\addplot[point meta = {\thisrow{Vnorm})},quiver={u=\thisrow{Vm1norm},v=\thisrow{Vm2norm},scale arrows = 0.2},-stealth,line width=1.5pt,quiver/colored = {mapped color},] table {Data-power_law-rotation_f-lambda=1-r=2.txt};	
			\end{axis}	
			\end{tikzpicture}&
			\begin{tikzpicture}[scale=0.65]
			\begin{axis}[title={$\lambda=100,\quad r=2$}, xlabel=$f_1$,ylabel=$f_2$,colorbar]
			\addplot[point meta = {\thisrow{Vnorm})},quiver={u=\thisrow{Vm1norm},v=\thisrow{Vm2norm},scale arrows = 0.2},-stealth,line width=1.5pt,quiver/colored = {mapped color},] table {Data-power_law-rotation_f-lambda=100-r=2.txt};	
			\end{axis}	
			\end{tikzpicture}
			\\
			\begin{tikzpicture}[scale=0.65]
			\begin{axis}[title={$\lambda=1, \quad r=2.3$}, xlabel=$f_1$,ylabel=$f_2$,colorbar]
			\addplot[point meta = {\thisrow{Vnorm})},quiver={u=\thisrow{Vm1norm},v=\thisrow{Vm2norm},scale arrows = 0.2},-stealth,line width=1.5pt,quiver/colored = {mapped color},] table {Data-power_law-rotation_f-lambda=1-r=2.3.txt};	
			\end{axis}	
			\end{tikzpicture}
			&
			\begin{tikzpicture}[scale=0.65]
			\begin{axis}[title={$\lambda=100, \quad r=2.3$}, xlabel=$f_1$,ylabel=$f_2$,colorbar]
			\addplot[point meta = {\thisrow{Vnorm})},quiver={u=\thisrow{Vm1norm},v=\thisrow{Vm2norm},scale arrows = 0.2},-stealth,line width=1.5pt,quiver/colored = {mapped color},] table {Data-power_law-rotation_f-lambda=100-r=2.3.txt};	
			\end{axis}	
			\end{tikzpicture}
			\\
			\begin{tikzpicture}[scale=0.65]
			\begin{axis}[title={$\lambda=1, \quad r=2.6$}, xlabel=$f_1$,ylabel=$f_2$,colorbar]
			\addplot[point meta = {\thisrow{Vnorm})},quiver={u=\thisrow{Vm1norm},v=\thisrow{Vm2norm},scale arrows = 0.2},-stealth,line width=1.5pt,quiver/colored = {mapped color},] table {Data-power_law-rotation_f-lambda=1-r=2.6.txt};	
			\end{axis}	
			\end{tikzpicture}&
			\begin{tikzpicture}[scale=0.65]
			\begin{axis}[title={$\lambda=100, \quad r=2.6$}, xlabel=$f_1$,ylabel=$f_2$,colorbar]
			\addplot[point meta = {\thisrow{Vnorm})},quiver={u=\thisrow{Vm1norm},v=\thisrow{Vm2norm},scale arrows = 0.2},-stealth,line width=1.5pt,quiver/colored = {mapped color},] table {Data-power_law-rotation_f-lambda=100-r=2.6.txt};	
			\end{axis}	
			\end{tikzpicture}
		\end{tabular}
	\end{center}
	\caption{Case $\gamma>1$, $r>2$ (power law). Representation of $\tilde V'$ when $f'=(f_1,f_2)$ takes the form $(f_1,f_2)=(\cos \theta,\sin \theta)$ with $\theta\in [0,\pi/2]$, in the case of the obstacle shape $E_2$. As in Fig.\!~\ref{Fig:rotation_Carreau}, each vector $\tilde V'$ is represented by a vector of length $0.2$, localized at point $(f_1,f_2)$ and colored according to $|\tilde V'|$. The left column corresponds to $\lambda=1$, the right one to $\lambda=100$, and each line from top to bottom corresponds respectively to $r=2$, $r=2.3$ and $r=2.6$.}\label{Fig:rotation_power}
\end{figure}
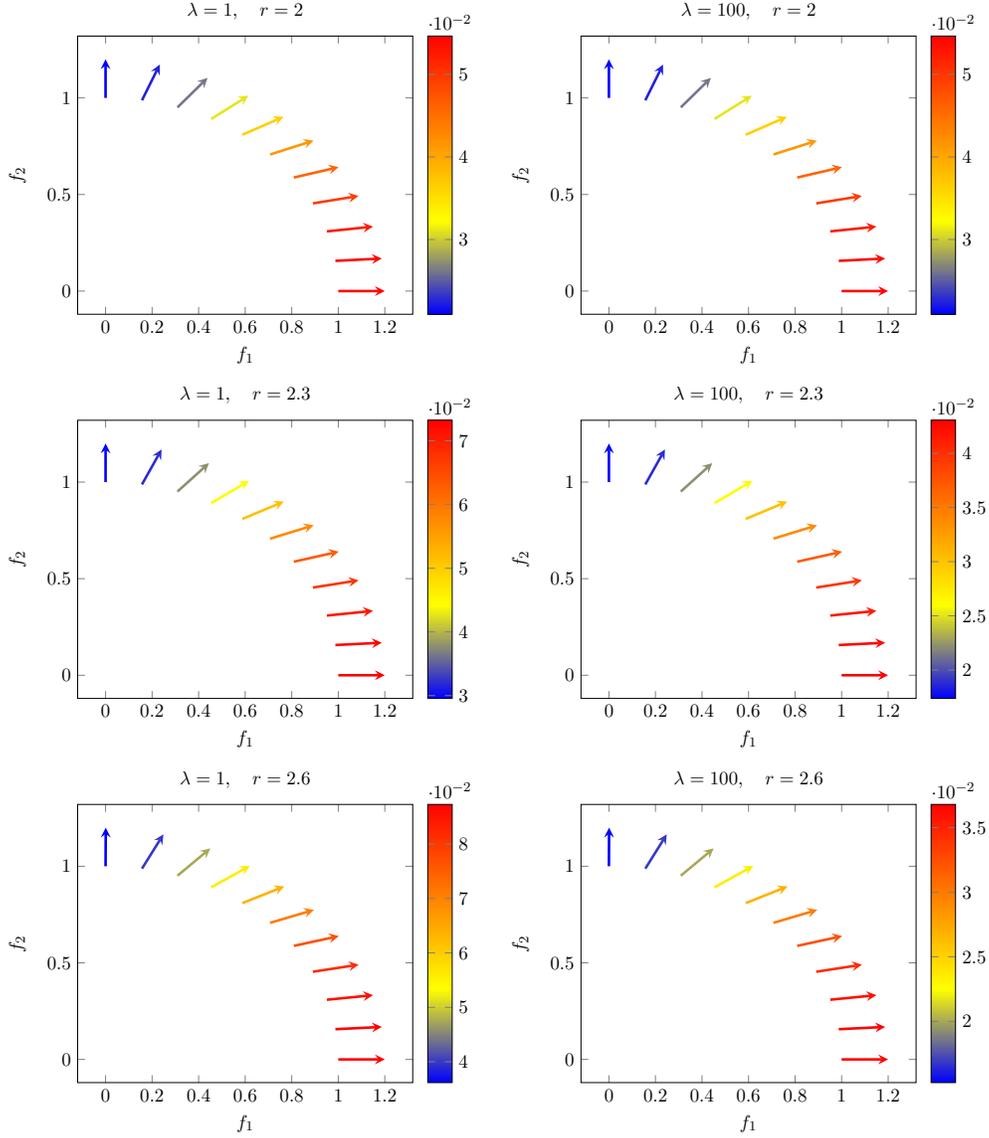

\newpage
{\bf Acknowledgments:}
The main idea of this paper was developed while  Mar\'ia Anguiano and Francisco J. Su\'arez-Grau were visiting the Laboratoire Jacques-Louis Lions (in July 2022), at Sorbonne Université, Université Paris Cité and Centre National de la Recherche Scientifique in Paris (France), invited by Dr.\! Matthieu Bonnivard in order to develop a joint contribution on the thematics of the project ``An\'alisis, homogeneizaci\'on y simulaci\'on num\'erica de EDPs con origen en mec\'anica de fluidos",  financially supported by the Laboratoire Jacques-Louis Lions. Mar\'ia Anguiano and Francisco J. Su\'arez-Grau would like to thank people from this Laboratoire for their very kind hospitality and especially Dr.\! Matthieu Bonnivard. Mar\'ia Anguiano wants to dedicate this paper to her father, Julio, for all his love and for the good times they had together in Paris in July 2022.

\end{document}